\def\bs{\backslash}
\newtheorem{theorem}{Theorem}[section]
\numberwithin{equation}{section}
\newtheorem{lemma}[theorem]{Lemma}
\newtheorem{proposition}[theorem]{Proposition}
\newtheorem{remark}[theorem]{Remark}
\numberwithin{equation}{section}
\def\N{\mathbb{N}}
\def\Z{\mathbb{Z}}
\def\R{\mathbb{R}}
\def\C{\mathcal{C}}
\def\cR{\mathcal{R}}
\def\cO{\mathcal{O}}
\def\V{\mathcal{V}}
\def\CC{\mathcal{C}}
\def\bE{\mathbb{E}}
\def\bP{\mathbb{P}}
\def\D{\mathcal{D}}
\renewcommand{\phi}{\varphi}
\renewcommand{\epsilon}{\varepsilon}
\def\K{\mathcal{K}}
\def\RR{\mathcal{R}}
\def\C{{\mathcal C}}
\newcommand{\1}{{\text{\Large $\mathfrak 1$}}}
\newcommand{\var}{\operatorname{var}}
\def\reff#1{(\ref{#1})}
\newcommand{\tn}{|\kern-.1em|\kern-0.1em|}
\newcommand{\cp}{\mathrm{Cap}}
\newcommand{\cc}[1]{\mathrm{Cap}\left(#1\right)}
\newcommand{\ccc}[1]{\overline{\mathrm{Cap}}\left(#1\right)}
\def\bP{\mathbb{P}}
\begin{document}
\title{\bf Deviations for the Capacity of the Range of a Random Walk}

\author{Amine Asselah \thanks{
LAMA, Univ Paris Est Creteil, Univ Gustave Eiffel, UPEM, CNRS, F-94010, Cr\'eteil, France; amine.asselah@u-pec.fr } \and
Bruno Schapira\thanks{Aix-Marseille Universit\'e, CNRS, Centrale Marseille, I2M, UMR 7373, 13453 Marseille, France;  bruno.schapira@univ-amu.fr} 
}
\date{}
\maketitle
\begin{abstract}
We obtain estimates for large and moderate deviations for
the capacity of the range of a 
random walk on $\Z^d$, in dimension $d\ge 5$, both in the upward and downward directions. 
The results are analogous to those we obtained for the volume 
of the range in two companion papers \cite{AS, AS19}. 
Interestingly, the main steps of the strategy
we developed for the latter apply
in this seemingly different setting, 
yet the details of the analysis are different. 
\\

\noindent \emph{Keywords and phrases.} Random Walk, Capacity, Range, Large deviations, Moderate deviations. \\
MSC 2010 \emph{subject classifications.} Primary 60F05, 60G50.
\end{abstract}

\section{Introduction}\label{sec-intro}
We consider a simple random walk $(S_n)_{n\in \N}$ on $\Z^d$ starting
from the origin. The range of the walk between 
two times $k,n$ with $k\le n$, is denoted as 
$\cR[k,n]:=\{S_k,\dots,S_n\}$ with the shortcut $\cR_n=\cR[0,n]$. 
Its Newtonian capacity, denoted $\cc{\cR_n}$,
can be seen as the hitting probability of $\cR_n$
by an independent random walk starting from 
{\it far away} and properly normalized. Equivalently, using reversibility, 
it can be expressed as the sum of 
escape probabilities from $\cR_n$ by an independent random walk  
starting along the range. 
In other words, $\cc{\cR_n}$ is random and has the following representations: 
\begin{equation}\label{def-cap}
\cc{\cR_n}\ =\ \lim_{z\to\infty}\, 
\frac{\bP_{0,z} (\widetilde H_{\cR_n}<\infty\ |\ S)}{G(z)}=
\sum_{x\in \cR_n} \bP_{0,x} \big(\widetilde H^+_{\cR_n}=\infty\ |\ S\big),
\end{equation}
where $\bP_{0,z}$ is the law of two independent
walks $S$ and $\widetilde S$ starting at $0$ and $z$ respectively, $G(\cdot)$ is Green's function, and 
$\widetilde H_{\Lambda}$ (resp. $\widetilde H^+_{\Lambda}$) 
stands for the hitting (resp. return) time of $\Lambda$ 
by the walk $\widetilde S$.

In view of \reff{def-cap}, the study of the capacity of the range 
is intimately related to the question of estimating probabilities of 
intersection of random walks. This chapter has grown 
quite large, with several motivations from
statistical mechanics keeping the interest alive (see Lawler's
celebrated monograph \cite{Law91}). The last decade has witnessed revival interests both
after a link between uniform spanning trees and loop erased random walks
was discovered  
(see \cite{LawSW}, \cite{Hut} for recent results) and
after the introduction of random interlacements 
by Sznitman in \cite{S10} which mimic a random walk 
confined in a region of volume comparable to its time span.

The study of the capacity of the range of a random walk has
a long history. Jain and Orey \cite{JO} show that in 
any dimension $d\ge 3$, 
there exists a constant $\gamma_d\in [0,\infty)$, such that almost surely
\begin{equation}\label{cap.limit}
\lim_{n\to\infty}\frac 1n \cp(\RR_n) = \ \gamma_d,\quad\text{and}\quad
\gamma_d>0,\ \text{if and only if,}\ d\ge 5.
\end{equation}
The first order asymptotics is obtained in dimension $3$ in \cite{Chang},
where $\cp(\RR_n)$ scales like $\sqrt n$. Dimension $4$ is {\it the critical dimension}, 
and a central limit theorem with a non-gaussian limit is established in \cite{ASS19}.
In higher dimensions, a central limit theorem is proved 
in \cite{Sch19} for $d=5$, and in \cite{ASS18a} for $d\ge 6$. 

Here, we mainly study the downward deviations for
the capacity of the range in dimension $d\ge 5$,
in the moderate and large deviations regimes. 
We also establish a large deviations 
principle in the upward direction.
Our analysis is, as in our previous works \cite{AS,AS19}, related to the celebrated large
deviation analysis of the volume of the Wiener sausage 
by van den Berg, Bolthausen and den Hollander \cite{BBH01}.
The folding of the Wiener sausage, under squeezing its volume, became 
a paradigm of {\it folding}, with localization in a domain with holes 
of order one (the picture of a Swiss Cheese popularized in \cite{BBH01}). 
The variational formula for the rate function
was shown to have minimizers of different nature in $d=3$ and in $d\ge 5$
suggesting dimension-dependent optimal scenarii to achieve the deviation.
For the discrete analogue of the Wiener sausage, we established in 
\cite{AS, AS20a} 
some path properties confirming some observations of \cite{BBH01}.

\paragraph{Main results}
Our first result concerns the large and moderate deviations in dimension $7$ and higher. In this case, we obtain upper and lower bounds which are of the same order (on a logarithmic scale), and we cover (almost) the whole set of possible moderate deviations in the non-Gaussian regime.

\begin{theorem}\label{thm:d6}
Assume $d\ge 7$. There exist positive constants $\varepsilon$, 
$\underline \kappa$ 
and $\overline \kappa$ (only depending on the dimension),
such that for any $n^{\frac{d-2}{d}} \cdot \log n \le \zeta \le \varepsilon n$, and for $n$ large enough, 
\begin{equation}\label{dev.nongauss.7}
\exp\left(- {\underline\kappa}\cdot \zeta^{1-\frac{2}{d-2}} \right)\ \le\ 
\bP\left( \cc{\cR_n}-\bE[ \cc{\cR_n}]\le -\zeta\right)\ \le\ 
\exp\left(- {\overline\kappa}\cdot \zeta^{1-\frac{2}{d-2}} \right). 
\end{equation}
\end{theorem}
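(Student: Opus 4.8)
The plan is to establish the two bounds in \reff{dev.nongauss.7} separately, following the strategy developed for the volume of the range in \cite{AS, AS19}, and adapting each step to the capacity functional.

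For the \textbf{lower bound} on $\bP(\cc{\cR_n}-\bE[\cc{\cR_n}]\le -\zeta)$, the idea is to exhibit an explicit, not-too-costly event on which the capacity drops by at least $\zeta$. The natural candidate is the event that the walk confines itself inside a ball of radius $R$ during a time window of length $T$, where $R$ and $T$ are tuned so that $R^{d-2}\asymp \zeta$ (so that the capacity of the ball, hence the loss of capacity on that window, is of order $\zeta$) and $T\asymp R^2\asymp \zeta^{2/(d-2)}$ (the diffusive time to cross such a ball). The probabilistic cost of forcing the walk to stay in a ball of radius $R$ for time $T$ is of order $\exp(-c\,T/R^2)$ per ``unit'' — more precisely, of order $\exp(-c\, \zeta^{1-2/(d-2)})$ once one iterates the confinement over $\zeta/R^{d-2}$-many time blocks to accumulate a total deficit of order $\zeta$. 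One must check that the capacity of the complementary part of the range (the pieces before and after the confinement window) does not fluctuate upward by more than a constant fraction of $\zeta$ with probability bounded below; this uses the concentration estimates for $\cc{\cR_n}$ around its mean that should be available from the earlier sections (or from \cite{ASS18a, Sch19}), together with the subadditivity $\cc{A\cup B}\le \cc{A}+\cc{B}$.

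For the \textbf{upper bound}, the strategy is the more delicate half. One partitions the time interval $[0,n]$ into $n/T$ blocks of length $T$, with $T\asymp \zeta^{2/(d-2)}$ as above, and writes $\cc{\cR_n}$ in terms of the capacities of the pieces $\cR[iT,(i+1)T]$, controlling the overlap/interaction terms. The key point, inherited from the volume analysis, is a decoupling or sub-/super-additivity statement: a deviation of size $\zeta$ for the whole range forces a deviation of size $\asymp T\cdot\gamma_d$ (i.e. a ``macroscopic'' deficit) in a positive fraction of the blocks, or else a large interaction term; each such block-event has probability at most $\exp(-c\,T^{1-2/(d-2)}) = \exp(-c')$ — no, rather one needs the sharper statement that a deficit of order $\zeta$ concentrated on a window of length $T$ costs $\exp(-\overline\kappa\,\zeta^{1-2/(d-2)})$, which comes from an \emph{a priori} exponential moment bound for $\cc{\cR_T}$ on the relevant scale. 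Concretely, I would prove a lemma of the form: there exist $c,\lambda>0$ with $\bE\big[\exp(-\lambda\,(\cc{\cR_T}-\gamma_d T)/T^{?})\big]\le \exp(cT^{?})$ on the appropriate range of $T$, and then a standard chaining/subadditive argument (exactly as in \cite{AS19}) upgrades this to the full-range bound. The role of the constraint $\zeta\ge n^{(d-2)/d}\log n$ is to ensure $T\le \varepsilon n$ so that the number of blocks $n/T$ is large enough for the chaining to beat the entropy (number of ways to choose the bad blocks), while $d\ge 7$ is what makes the exponent $1-2/(d-2)>2/d$, so that the polynomial prefactors and the logarithmic slack are absorbed.

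The \textbf{main obstacle} is the upper bound, and specifically the interaction terms: unlike the volume, the capacity is not additive over disjoint pieces of the range, and the error $\cc{A}+\cc{B}-\cc{A\cup B}$ is governed by the mutual intersection of two random walk ranges, whose tail behavior must be controlled on exactly the scale $T\asymp\zeta^{2/(d-2)}$. Here the analysis genuinely diverges from \cite{AS, AS19}: one needs estimates on $\bP(\text{two independent ranges of length }T\text{ intersect in }\ge s\text{ points})$ and on the associated capacity defect, which I expect to handle via the last-exit decomposition in \reff{def-cap} combined with Green's function bounds $G(x)\asymp |x|^{2-d}$ and a second-moment computation for the number of intersection points. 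Assembling these into a clean recursive inequality for $\psi(\zeta):=-\log\bP(\cc{\cR_n}-\bE[\cc{\cR_n}]\le -\zeta)$ and iterating is the technical heart of the proof.
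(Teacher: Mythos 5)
Your lower bound is essentially the paper's argument (Proposition \ref{prop.lower6}): confine the walk in a ball of radius $R$ with $R^{d-2}\asymp\zeta$ for a time $\ell\asymp\zeta$, at cost $\exp(-c\,\ell/R^2)=\exp(-c\,\zeta^{1-2/(d-2)})$, and use subadditivity plus concentration of the remaining piece. Note, though, that your bookkeeping is off: with $R^{d-2}\asymp\zeta$ there is a \emph{single} confinement window of length $\asymp\zeta$ (not $\zeta/R^{d-2}\asymp 1$ windows of length $R^2$); a window of length $R^2$ only produces a deficit $O(R^2)=O(\zeta^{2/(d-2)})$, so your iteration count does not accumulate a deficit of order $\zeta$ as written. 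This is repairable.

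The upper bound, however, has a genuine gap exactly where you locate "the technical heart". You correctly note that the block sum concentrates and that the deviation must be carried by the interaction terms, but your proposed tool for them --- tail estimates for the \emph{number of intersection points} of two ranges of length $T$, via a second-moment computation --- is the wrong quantity and the wrong method. The capacity defect $\chi_\C(A,B)$ in \reff{decomp-ASS} is not an intersection count but a Green's-function-weighted mutual energy, $\sum_{x\in A}\sum_{y\in B}\bP_x(H^+_A=\infty)G(x-y)\bP_y(H^+_B=\infty)$, which can be large even when $A\cap B=\emptyset$; and a second-moment/Chebyshev bound can only yield polynomial tails, whereas one needs the stretched-exponential decay $\exp(-c\,\zeta^{1-2/(d-2)})$. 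The paper's route is: (i) compare the sum of cross-terms to its compensator and reduce the problem to upward deviations of an explicit corrector $\xi_n(T)=\sum_{k\le n}\sum_{x\in\cR_k}\bP_x(H^+_{\cR_k}=\infty)\,G\star G_T(x-S_k)/T$ (Proposition \ref{prop.corrector}); (ii) control $\xi_n(T)$ by a multi-scale decomposition of the time indices according to the local occupation density of the walk, i.e.\ the sets $\K_n(r,\rho)$ and the large-deviation bound of Theorem \ref{lem-AS} imported from \cite{AS19}, combined with the deterministic estimates of Lemmas \ref{lem-HD}, \ref{lem-asympt} and \ref{lem:rear} (where keeping the escape probabilities is essential --- dropping them changes the tail). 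None of this machinery, nor a substitute for it, appears in your plan, so the upper bound is not established.
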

Recall that a central limit theorem is proved in 
\cite{ASS18a}, where we show in particular that $\var(\cc{\cR_n}) \sim \sigma^2 n$, for some constant $\sigma>0$. 
Our next result proves now a Moderate Deviation Principle in the Gaussian regime. 

\begin{theorem}\label{theo.Gaussian}
Assume $d\ge 7$. For any sequence $\{\zeta_n\}_{n\ge 0}$, 
satisfying $\lim_{n\to \infty} \zeta_n/\sqrt n = \infty$, 
and $\lim_{n\to \infty} \zeta_n (\log n) /  n^{\frac{d-2}d} = 0$, we have
\begin{equation}\label{limit-TCL}
\lim_{n\to \infty} \frac{n}{\zeta_n^2} \cdot 
\log \bP\left(\pm(\cc{\cR_n}-\bE[\cc{\cR_n}])> \zeta_n\right) 
=-\frac{1}{2\sigma^2}. 
\end{equation}
\end{theorem}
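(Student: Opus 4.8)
The plan is to reduce the claim to a moderate deviation principle for a sum of uniformly bounded, finite‑range dependent random variables, for which it is classical, the whole point being to produce such a representation of $\cc{\cR_n}$. Fix $\psi_n:=\cc{\cR_n}-\bE[\cc{\cR_n}]$. The naive route — using the deterministic subadditivity $\cc{\cR_n}\le\sum_{j}\cc{\cR[I_j]}$ over consecutive blocks $I_j$ of length $\ell$, or equivalently the submultiplicativity $\bE[e^{\theta\cc{\cR_n}}]\le\bE[e^{\theta\cc{\cR_{n/2}}}]^{2}$ for $\theta>0$ coming from $\cc{\cR_n}\le\cc{\cR[0,n/2]}+\cc{\cR[n/2,n]}$ and the independence of the two terms (using translation invariance, $\cc{\cR[n/2,n]}\eqdist\cc{\cR_{n/2}}$) — is too lossy: the subadditivity defect $\chi_n:=\sum_j\cc{\cR[I_j]}-\cc{\cR_n}$ has mean of order $n/\ell$, forced by the ``seams'' $S_{j\ell}$ shared by neighbouring blocks (each carrying an escape probability bounded below), and $n/\ell$ is never $o(\zeta_n)$ for an admissible block length. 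One must therefore keep the seam contributions inside the main sum. Choose $\ell=\ell_n$ with $\ell_n\to\infty$, $\ell_n=o(n/\zeta_n)$ and $\ell_n\gg(n/\zeta_n)^{2/(d-4)}$ — possible exactly because $d\ge7$ (so $2/(d-4)<1$) and $n/\zeta_n\to\infty$ (as $\zeta_n=o(n^{(d-2)/d})$). Cut $[0,n]$ into $m=\lfloor n/\ell\rfloor$ consecutive blocks $\A_j:=\cR[I_j]$, split the escape probabilities in the second representation of \reff{def-cap} equally among the blocks containing each point, and replace, for $x\in\A_j$, the escape probability from $\cR_n$ by that from $\A_{j-K}\cup\dots\cup\A_{j+K}$ (a large constant $K$), with the multiplicity count truncated accordingly. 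Using the Green's function bound $\bP_x(H_{\A_{j'}}<\infty)\le\sum_{y\in\A_{j'}}G(x,y)$ and $\bE\big[\sum_{|j-j'|>K}\sum_{x\in\A_j,\,y\in\A_{j'}}G(x,y)\big]\lesssim n\,\ell^{(4-d)/2}K^{-(d-4)/2}$, this yields a representation
\begin{equation*}
\cc{\cR_n}=\sum_{j=1}^{m}\Phi_j+\cE_n ,
\end{equation*}
where $\Phi_j$ is a function of $(\A_{j-K},\dots,\A_{j+K})$, hence $0\le\Phi_j\le\ell+1$ and $\{\Phi_j\}$ is — away from the $O(K)$ boundary blocks, which cost only $O(K\ell)=o(\zeta_n)$ since $\zeta_n\gg\sqrt n$ — stationary and $2K$-dependent; the remainder satisfies $\bE[|\cE_n|]=o(\zeta_n)$, $\var(\cE_n)=o(n)$, and, crucially, $\bP(|\cE_n|>a)\le\exp\!\big(-c\,a^{\,1-\frac{2}{d-2}}/(\log a)^{c'}\big)$ — the same ``folding'' tail as in \reff{dev.nongauss.7}. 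Consequently $\var\big(\sum_j\Phi_j\big)=\var(\cc{\cR_n})(1+o(1))\sim\sigma^2n$ by \cite{ASS18a}.

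For the upper bound, write $M_n:=\sum_j(\Phi_j-\bE[\Phi_j])$ and $E_n:=\cE_n-\bE[\cE_n]$, so $\psi_n=M_n+E_n$, and for any $\eps>0$ and either sign,
\begin{equation*}
\bP(\pm\psi_n>\zeta_n)\le\bP(\pm M_n>(1-\eps)\zeta_n)+\bP(|E_n|>\eps\zeta_n).
\end{equation*}
By a Bernstein estimate for the uniformly bounded (by $\ell+1$), $2K$-dependent, centred array $\{\Phi_j-\bE[\Phi_j]\}$ — obtained by regrouping consecutive $\Phi_j$ into long independent super-blocks separated by $2K$-gaps — and using $\var(\sum_j\Phi_j)\sim\sigma^2n$ together with $\ell_n\zeta_n=o(n)$ (which makes the Bernstein correction negligible next to the variance), the first term is at most $\exp\!\big(-(1-\eps)^2(1+o(1))\zeta_n^2/(2\sigma^2n)\big)$. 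The second term is at most $\exp\!\big(-c\,\zeta_n^{\,1-\frac{2}{d-2}}/(\log\zeta_n)^{c'}\big)$, which is $o\big(\exp(-\zeta_n^2/(2\sigma^2n))\big)$ exactly because $\zeta_n(\log n)/n^{(d-2)/d}\to0$: this forces $n\gg\zeta_n^{\,d/(d-2)}(\log n)^{d/(d-2)}$, hence $\zeta_n^{\,1-\frac{2}{d-2}}\gg\zeta_n^2(\log n)^{d/(d-2)}/n$. Letting $n\to\infty$ then $\eps\downarrow0$ gives $\limsup_n\frac{n}{\zeta_n^2}\log\bP(\pm\psi_n>\zeta_n)\le-\frac{1}{2\sigma^2}$.

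For the lower bound, fix $\eps>0$ and a sign. On the event $\{\pm M_n>(1+\eps)\zeta_n\}\cap\{|E_n|\le\eps\zeta_n\}$ one has $\pm\psi_n>\zeta_n$ for $n$ large. The first event has probability at least $\exp\!\big(-(1+\eps)^2(1+o(1))\zeta_n^2/(2\sigma^2n)\big)$ by the matching moderate-deviations lower bound for $\{\Phi_j-\bE[\Phi_j]\}$ (same super-block reduction, now to the i.i.d.\ lower bound, valid since $\sigma^2>0$, $\zeta_n\gg\sqrt n$ and $\ell_n\zeta_n=o(n)$), while $\bP(|E_n|>\eps\zeta_n)$ is, by the tail bound and the hypothesis on $\zeta_n$ as above, super-exponentially smaller than that. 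Subtracting and letting $\eps\downarrow0$ yields $\liminf_n\frac{n}{\zeta_n^2}\log\bP(\pm\psi_n>\zeta_n)\ge-\frac{1}{2\sigma^2}$, which together with the upper bound gives \reff{limit-TCL}.

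The hard part is the construction in the first paragraph: isolating from $\cc{\cR_n}$ a finitely dependent, uniformly bounded block sum with the correct variance normalisation $\sigma^2n$, while showing that the discarded far-block interactions form a remainder that is $o(\zeta_n)$ in mean, $o(n)$ in variance, and has a stretched-exponential tail at the folding rate $\exp(-c\,a^{1-2/(d-2)})$. The mean estimate of the far interactions relies on the Green's function computation that is responsible for the restriction $d\ge7$, and their deviation estimate is exactly where the machinery behind Theorem~\ref{thm:d6} — the capacity analogue of the Wiener-sausage folding estimates of \cite{BBH01} — is invoked, and where the ceiling $\zeta_n=o(n^{(d-2)/d}/\log n)$ is used (consistently with $n^{(d-2)/d}$ being the crossover scale at which $\zeta^{1-2/(d-2)}$ and $\zeta^2/n$ balance). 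Once this decomposition is available, the passage to the Gaussian moderate deviation principle is the textbook one for uniformly bounded, weakly dependent arrays with non-degenerate asymptotic variance.
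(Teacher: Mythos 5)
Your approach is genuinely different from the paper's in structure: you aim at a linear block decomposition into $m=n/\ell$ blocks with a finite-range ($2K$-dependent) local-capacity functional $\Phi_j$ plus a remainder $\cE_n$, and then invoke a Bernstein/moderate-deviation principle for bounded $m$-dependent arrays, with block length $\ell_n$ chosen so that $(n/\zeta_n)^{2/(d-4)}\ll\ell_n\ll n/\zeta_n$. The paper instead iterates the exact identity \reff{decomp-1} along a dyadic scheme into $2^L$ i.i.d.\ pieces of length $\sim n/\zeta_n$ (equation \reff{UBd5-1}) and applies G\"artner--Ellis to the i.i.d.\ block sum, with Nagaev's theorem killing the cross terms $Y^\ell_i$ at each dyadic level. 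Your version, if made precise, has the advantage of avoiding the dyadic bookkeeping, and the $m$-dependent MDP is a clean black box; the paper's version has the advantage of being an \emph{exact} algebraic decomposition with i.i.d.\ blocks, so no ``splitting'' or local-escape-probability surgery is needed.

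The main gap is the assertion, not the proof, of the stretched-exponential tail $\bP(|\cE_n|>a)\le\exp(-c\,a^{1-2/(d-2)}/(\log a)^{c'})$ for the remainder. You attribute it to ``the machinery behind Theorem~\ref{thm:d6},'' but that machinery (Sections~\ref{sec-martin}--\ref{sec-UB}) controls the specific corrector $\xi_n(T)$ attached to $\cp(\cR_n)$ itself, not a generic sum of far-block Green's interactions; it does not transfer to your $\cE_n$ without substantial new work. In fact, producing precisely this kind of tail bound for cross-term quantities is what the paper identifies as the ``main technical novelty'' of this section and carries out in Theorem~\ref{theo.stretched.exp}: a stretched-exponential tail for $\Gamma(\widetilde\cR_\infty,\cR_\infty)=\sum_x\sum_y G(y-x)\,\bP_y(H^+_B=\infty)$, proved via the sets $\K_n(r,\rho)$ and a subadditivity-plus-geometric-domination argument. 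The paper stresses, just after Theorem~\ref{theo.stretched.exp}, that \emph{retaining the escape probability on one side is essential}: the plain sum $\Gamma'(\cR_\infty,\widetilde\cR_\infty)=\sum_x\sum_y G(x-y)$ has a strictly slower tail decay. Your sketch of the remainder, however, bounds the far-block interaction by $\sum_{|j-j'|>K}\sum_{x\in\A_j,y\in\A_{j'}}G(x,y)$ with no escape probability on the $y$-side, which is exactly the version that fails to have the advertised tail; if you keep the $y$-escape probability (from the local union around $j'$), you must then face the fact that these escape probabilities are with respect to overlapping, non-independent unions of blocks, and the cited Theorem does not directly apply. In addition, the lower bound step needs an actual MDP lower bound for centred sums of $2K$-dependent variables that are bounded only by $\ell_n\to\infty$, and the identification $\var(\sum_j\Phi_j)\sim\sigma^2n$ for your modified functional $\Phi_j$ is claimed but not derived; both are plausible but are part of the work, not corollaries.

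So the architecture is fine and genuinely different, but the hard analytic content — proving the stretched-exponential tail for the cross/remainder term with the correct escape-probability weighting, which is precisely Theorem~\ref{theo.stretched.exp} and its proof — is missing, and the heuristic shortcut ``reuse Theorem~\ref{thm:d6}'' does not close it.
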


In dimension $5$, we obtain similar estimates, but we do not reach the Gaussian regime:  
\begin{theorem}\label{thm:d5} 
Assume $d=5$. There exist positive constants $\varepsilon$, 
$\underline \kappa$ and $\overline \kappa$, 
such that for any $ n^{5/7}\cdot \log n \le \zeta\le \varepsilon n$, and $n$ large enough,   
\begin{equation*}
\exp\left(- {\underline\kappa}\cdot(\frac{\zeta^2}{n})^{1/3}  \right)\ \le\
\bP\left( \cc{\cR_n}-\bE[ \cc{\cR_n}]\le -\zeta\right)\ \le\ 
\exp\left(- {\overline\kappa}\cdot (\frac{\zeta^2}{n})^{1/3} \right).
\end{equation*}
\end{theorem}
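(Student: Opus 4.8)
The plan is to mimic the two-sided strategy that establishes Theorem~\ref{thm:d6}, but adapting the exponents to $d=5$, where the scaling is governed by the ``clumping'' scenario rather than the ``hole'' scenario. The conjectural optimal strategy for the downward deviation $\cc{\cR_n}-\bE[\cc{\cR_n}]\le -\zeta$ is that the walk spends an anomalous amount of time inside a ball of radius $R$, thereby reducing the capacity of its range by an amount of order $R^{d-2}\asymp R^3$ relative to the part of time spent there; optimizing the cost $\exp(-c\, (\text{time in the ball})/R^2)$ of such a localization against the constraint $R^3 \asymp \zeta$ (so that a positive fraction of time, namely of order $\zeta/\gamma_5$, must be folded into the ball), and taking the time spent there to be of order $\zeta$, gives a cost of order $\exp(-c\,\zeta/R^2)=\exp(-c\,\zeta/\zeta^{2/3})=\exp(-c\,\zeta^{1/3})$ — but one must be more careful, since $\zeta \le \varepsilon n$ only, and the true rate is $(\zeta^2/n)^{1/3}$, which reflects that the walk need only be ``slightly denser than typical'' over a time window of order $n$ rather than fully folded. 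Concretely, over a window of length $n$, reducing the capacity by $\zeta$ costs a local-time deviation inside a ball of radius $R\asymp (n/\zeta)^{?}$; balancing yields the exponent $(\zeta^2/n)^{1/3}$.

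For the \textbf{lower bound} I would exhibit an explicit strategy: force the walk, during a time interval of length $\asymp n$, to behave like a random walk conditioned to stay in (or to accumulate excess local time in) a ball of radius $R$ with $R$ chosen so that $R^3 \asymp \zeta^2/n \cdot (\text{something})$ — more precisely, pick $R$ so that the capacity deficit created is at least $\zeta$ while the entropic cost $\exp(-c\,n/R^2)$ is of the right order; one checks $n/R^2 \asymp (\zeta^2/n)^{1/3}$ when $R \asymp n^{2/3}\zeta^{-1/3}$ and the deficit $\asymp R^3 \cdot (\text{density factor}) \asymp \zeta$. The key input here is a good lower bound on the probability that the range's capacity drops by a constant factor inside such a ball (a ``folding lemma'' for capacity), together with the fact that capacity is subadditive, so the deficits over disjoint time-windows add up; I would likely tile $[0,n]$ into $\asymp n/R^2$ windows of length $R^2$ each contributing an independent small deficit, as in \cite{AS19}.

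For the \textbf{upper bound}, the strategy should parallel the one in the companion works and in the proof of Theorem~\ref{thm:d6}: a coarse-graining / chopping argument. Divide $[0,n]$ into blocks of a well-chosen length $t$; on each block the capacity of the range is concentrated around $\gamma_5 t$ (with fluctuations controlled by the CLT of \cite{Sch19} and by concentration estimates), and using the subadditivity $\cc{\cR_{m+n}} \le \cc{\cR_m}+\cc{\cR[m,m+n]}$ one shows that a total deficit of $\zeta$ forces either many blocks to each have a moderate deficit — which, by an entropy argument and the bound $\var(\cc{\cR_n})\asymp n$, is exponentially costly — or a single block over a window of length $\asymp \zeta$ (or of the ball-commensurate length) to have an atypically small capacity, which is controlled by a capacity analogue of the localization estimates. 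The optimization over the block length $t$ produces the exponent $(\zeta^2/n)^{1/3}$.

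\textbf{Main obstacle.} The hardest part will be the capacity-specific ``folding'' estimate — both directions require understanding how $\cc{\cR_n}$ decreases when the walk is confined to, or spends excess time in, a ball of radius $R$ in dimension $5$, and capacity is a genuinely non-local, non-additive functional (unlike the volume of the range), so the clean arguments from \cite{AS,AS19} do not transfer verbatim. One must quantify that confining a fraction of the walk's steps to a ball of radius $R$ reduces the capacity by $\asymp R^{d-2}$ (not more, not less, up to constants), which in $d=5$ is the delicate regime where the random-interlacement-type heuristic of \cite{S10} is only marginally valid; controlling the lower-order corrections and the interaction between the confined portion and the rest of the range (which can shield or be shielded) is where the bulk of the technical work lies, and is presumably why the Gaussian regime is not reached in $d=5$.
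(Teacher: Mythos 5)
Your heuristic is the right one (confinement in a single ball of radius $R\asymp n^{2/3}\zeta^{-1/3}$ over the whole time horizon, entropic cost $\exp(-n/R^2)=\exp(-(\zeta^2/n)^{1/3})$), but both of your proposed proofs have a gap at the same point: in $d=5$ the deficit is not carried by individual time blocks, it is carried by the \emph{cross-terms} between blocks. For the upper bound, your dichotomy ``many blocks each with a moderate deficit, or one block with an atypically small capacity'' does not follow from subadditivity: since $\cp(\cR_n)\le\sum_i\cp(\cR(I_i))$, a total deficit of $\zeta$ is perfectly compatible with every block having typical capacity $\gamma_5|I_i|$ — the loss then sits entirely in the interaction terms $\chi_\C(\cR(\widetilde I_{j,\ell}),\cR(I_{j,\ell+1}))$. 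The paper's actual mechanism (Proposition~\ref{prop.corrector}) is to show that a deficit $\zeta$ forces the sum of cross-terms, and hence its compensator $\xi_n(T)=\sum_k\sum_{x\in\cR_k}\bP_x(H^+_{\cR_k}=\infty)\,G\star G_T(x-S_k)/T$ with $T\asymp(\zeta n)^{1/3}$, to exceed $c\zeta$; one then proves (Proposition~\ref{prop.Anxin5}, using the occupation-density sets $\K_n(r,\rho)$ of \cite{AS19} together with Lemmas~\ref{lem-HD}, \ref{lem-asympt} and \ref{lem:rear}) that $\xi_n(T)\ge\zeta$ forces anomalously high occupation densities, whose cost is $\exp(-c(\zeta^2/n)^{1/3})$ by Theorem~\ref{lem-AS}. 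Without this reduction your upper-bound argument does not close, and the ``single bad block'' branch would in any case be circular.

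The same issue undoes your lower-bound tiling. A window of length $R^2$ is already diffusively confined to scale $R$, so forcing it into a ball of radius $R$ produces essentially no capacity deficit on its own; there are no ``independent small deficits over disjoint windows'' to add up. The $\zeta$ deficit is a collective, quadratic effect: each pair of windows inside the same ball contributes a cross-term $\approx G(R)\,\cp(\cR(W_i))\cp(\cR(W_j))$, and only the sum over all $\asymp(n/R^2)^2$ pairs gives $n^2/R^3\asymp\zeta$. The paper implements this with a single split: it takes $m=n-\lfloor n/10\rfloor$, uses $\cp(A\cup B)\le\cp(A)+\cp(B)-\chi_0(A,B)$ with $A=\cR_m$, $B=\cR[m,n]$, confines the \emph{entire} walk to a ball of radius $\rho\, n^{2/3}\zeta^{-1/3}$ on an event $F$ with $\bP(F)\ge\exp(-\kappa(\zeta^2/n)^{1/3})$, and lower-bounds the single cross-term by $c_\rho\frac{\zeta}{n^2}\cp(\RR[m,n])\big(\cp(\RR_n)-\cp(\RR[m,n])\big)\gtrsim\zeta$, using $G\gtrsim R^{-3}$ inside the ball, the representation \eqref{cap.def2}, and concentration of the capacities around $\gamma_5 m$ and $\gamma_5\ell$ (via Proposition~\ref{cor:upward} and Lemma~\ref{lem.exp.cap}). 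No per-window ``folding lemma'' is needed — only the cross-term identity, the Gaussian-type upward bounds, and the standard confinement estimate.
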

\begin{remark}\label{rem-dev-d5}
\emph{
In $d=5$, the variance of $\cp(\cR_n)$ is of order $n\log n$, 
\cite{Sch19}. Thus, the moderate deviations should go
from a gaussian regime with a speed
of order $\zeta^2/(n\log n)$, to a large deviation regime with
a speed of order $(\zeta^2/n)^{1/3}$, and
with a transition occurring for $\zeta$ of order $\sqrt {n}(\log n)^{3/4}$. 
For an explanation of the exponent $5/7$ which limits us here, 
see Remark \ref{rem.5}.  
Note that in the case of the volume of the range,
a similar transition has been established 
by Chen \cite{Chen} in dimension $3$, and by the authors in $d\ge 5$ in 
the companion paper \cite{AS19}.}
\end{remark}

\begin{remark}
\emph{In dimension $6$ our result is less precise. One can only show that 
\begin{equation*}
\exp\left(- {\underline\kappa} \cdot \zeta^{1/2} \right)\ \le\ 
\bP\left( \cc{\cR_n}-\bE[ \cc{\cR_n}]\le -\zeta\right)\ \le\ 
\exp\left(- \frac{{\overline\kappa}}
{\log (n/\zeta)}\cdot \zeta^{1/2} \right).
\end{equation*}
}
\end{remark}

Our next result provide path properties of the trajectory 
under the constraint of moderate deviations. To state it, 
one needs more notation. 
For $r>0$, and $x\in \Z^d$, set 
$$
Q(x,r):=[x-\frac r2,x+\frac r2)^d\cap \Z^d. 
$$
Given $\Lambda\subseteq \Z^d$, and $n\ge 0$, 
let $\ell_n(\Lambda)$ be the 
time spent in $\Lambda$ before time $n$. 
For $\rho \in (0,1]$, and $r,n$ positive integers, we let 
\begin{equation}\label{def-CV}
\C_n(r,\rho):= \{x\in r\Z^d\, :\, \ell_n(Q(x,r))\ge \rho r^d\},
\qquad\text{and }\qquad
\V_n(r,\rho):=\bigcup_{x\in \C_n(r,\rho)} Q(x,r).
\end{equation}
Define also
for a sequence of values of deviation $(\zeta_n)_{n\ge 1}$, 
\begin{eqnarray*}
\rho_{\textrm{typ}} := \left\{ 
\begin{array}{ll}
\zeta_n^{5/3}/n^{7/3} & \text{if }d=5\\
\zeta_n^{-2/(d-2)} & \text{if }d\ge 7, 
\end{array}
\right. 
\quad 
\tau_{\textrm{typ}}:= \left\{ 
\begin{array}{ll}
n & \text{if }d=5\\
\zeta_n & \text{if }d\ge 7, 
\end{array}
\right. 
\quad \text{and}\quad  
\chi_d:= \left\{ 
\begin{array}{ll}
5/7 & \text{if }d=5\\
\frac {d-2}{d} & \text{if }d\ge 7. 
\end{array}
\right. 
\end{eqnarray*}

\begin{theorem}\label{thm:scen-5}
Assume $d=5$, or $d\ge 7$. 
There are positive constants $\alpha$, $\beta$, $\varepsilon$ and $C_0$, such that for any sequence $(\zeta_n)_{n\ge 1}$, satisfying 
$$n^{\chi_d}\cdot \log n \le \zeta_n\le \epsilon n,$$  
defining $(r_n)_{n\ge 1}$ by 
$$r_n^{d-2}\rho_{\textrm{typ}} = C_0\log n,$$
one has 
\begin{equation}\label{result.path}
\lim_{n\to\infty}
\mathbb \bP\left(\ell_n(\V_n(r_n,\beta\rho_{\textrm{typ}}))
\ge \alpha\, \tau_{\textrm{typ}} \mid  \cc{\cR_n}-\bE[ \cc{\cR_n}]
\le -\zeta_n\right) = 1.
\end{equation}
Moreover, there exists $A>0$, such that 
\begin{equation}\label{result.capacite}
\lim_{n\to\infty}
\mathbb \bP\left(\cp(\V_n(r_n,\beta \rho_{\textrm{typ}})) \le A|\V_n(r_n,\beta \rho_{\textrm{typ}})|^{1-2/d} \mid  \cc{\cR_n}-\bE[ \cc{\cR_n}]
\le -\zeta_n\right) = 1.
\end{equation}
\end{theorem}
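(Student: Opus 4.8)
The plan is to run the proof in two stages. The first stage establishes the *localization* statement \reff{result.path}: conditionally on the downward deviation $\{\cc{\cR_n}-\bE[\cc{\cR_n}]\le -\zeta_n\}$, the walk must spend a macroscopic amount of time $\alpha\tau_{\textrm{typ}}$ inside the "dense region" $\V_n(r_n,\beta\rho_{\textrm{typ}})$ built from boxes of side $r_n$ where the local time exceeds a $\beta\rho_{\textrm{typ}}$-fraction of the volume. The second stage establishes \reff{result.capacite}: on this high-probability event, the dense region $\V_n(r_n,\beta\rho_{\textrm{typ}})$ is itself almost as "compact" as possible, i.e.\ its capacity is comparable to $|\V_n(r_n,\beta\rho_{\textrm{typ}})|^{1-2/d}$, the capacity of a ball of the same volume. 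Both parts should follow the same architecture as in the companion papers \cite{AS,AS19,AS20a} for the volume of the range, with the subadditivity / coarse-graining bounds replaced by the corresponding estimates for capacity that underlie Theorems \ref{thm:d6} and \ref{thm:d5}.

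For the first stage, I would argue by a second-moment / chaining decomposition of $\bE[\cc{\cR_n}]-\cc{\cR_n}$ according to the scales of "emptiness". The key input is that the deficit in capacity produced by a region where the walk does \emph{not} concentrate is quantitatively controlled: if the walk spent time $o(\tau_{\textrm{typ}})$ in every scale-$r_n$ box that is visited with density $\ge\beta\rho_{\textrm{typ}}$, then the total downward deviation it can accumulate is $o(\zeta_n)$, contradicting the conditioning. Concretely, one splits the time axis into $n/\tau_{\textrm{typ}}$ blocks (for $d\ge7$, blocks of length $\zeta_n$; for $d=5$, a single block of length $n$ but analyzed at spatial scale $r_n$), bounds the contribution of each block to $\cc{\cR_n}-\bE[\cc{\cR_n}]$ using the decomposition of capacity into escape probabilities in \reff{def-cap}, and invokes the a priori deviation upper bound (the right-hand inequalities in \reff{dev.nongauss.7} and Theorem \ref{thm:d5}) to show that realizing a deviation of size $\zeta_n$ \emph{without} the prescribed concentration costs strictly more than $\exp(-C\zeta_n^{1-2/(d-2)})$ (resp.\ $\exp(-C(\zeta_n^2/n)^{1/3})$), hence has conditional probability tending to $0$. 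The choice $r_n^{d-2}\rho_{\textrm{typ}}=C_0\log n$ is exactly the scale at which the expected capacity-deficit per dense box balances the entropic cost $\log n$ of locating that box, which is why $C_0$ must be tuned and why the hypothesis $\zeta_n\ge n^{\chi_d}\log n$ appears (it guarantees $r_n\to\infty$ and $\rho_{\textrm{typ}}\to0$ so that the boxes are both large and sparse).

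For the second stage, I would first note that \reff{result.path} already forces $|\V_n(r_n,\beta\rho_{\textrm{typ}})|$ to be of order $\tau_{\textrm{typ}}/\rho_{\textrm{typ}}$ up to constants (each dense box has volume $r_n^d$ and carries local time $\asymp\rho_{\textrm{typ}}r_n^d$, and the total local time inside is $\asymp\tau_{\textrm{typ}}$). Then the upper bound \reff{result.capacite} is obtained by a direct comparison: since $\cc{\cR_n}\ge\cp(\cR_n\cap\V_n)\ge c\,\cp(\V_n)$ minus a visited-volume correction, a region of volume $V$ whose capacity substantially exceeds $V^{1-2/d}$ would make $\cc{\cR_n}$ too large (its expectation is $\asymp n$ and the conditioning only lowers it by $\zeta_n\ll n$), while the entropic cost of creating such a region is again estimated via the coarse-grained capacity bounds; balancing shows that the conditional probability of $\cp(\V_n)> A|\V_n|^{1-2/d}$ is $o(1)$ for $A$ large. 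This is where one uses that among sets of given volume the ball (hence the scaling $|\Lambda|^{1-2/d}$) minimizes capacity, so the "cheapest" way to lose capacity $\zeta_n$ is to fold into an essentially ball-like clump.

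The main obstacle I anticipate is the first-stage quantitative lower bound on the cost of an \emph{unconcentrated} deviation: unlike the volume, capacity is neither additive nor monotone in a simple way, and the escape-probability representation \reff{def-cap} couples distant parts of the range through the Green's function, so the block decomposition does not cleanly decouple. Controlling the cross terms — showing that the capacity deficit is, up to logarithmic factors, dominated by a sum of \emph{local} deficits over scale-$r_n$ boxes — will require the intersection-of-ranges estimates behind \cite{ASS18a,ASS19} together with the last-exit / capacity subadditivity lemmas developed for Theorems \ref{thm:d6} and \ref{thm:d5}, and is the place where the "details of the analysis are different" (as the abstract warns) from the volume case. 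Once that decoupling is in hand, the entropy-versus-cost optimization that pins down $r_n$, $\alpha$, $\beta$ is essentially the same balancing computation as in \cite{AS19}.
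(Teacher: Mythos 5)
For the localization statement \reff{result.path}, your overall strategy coincides with the paper's: condition on $MD(n,\zeta_n):=\{\cc{\cR_n}-\bE[\cc{\cR_n}]\le-\zeta_n\}$ and show that the joint event ``deviation and no concentration'' has probability much smaller than $\bP(MD(n,\zeta_n))$. Two remarks, though. First, this comparison requires the \emph{lower} bounds of Theorems \ref{thm:d6} and \ref{thm:d5} (to bound the denominator of the conditional probability from below); you cite only the upper bounds, which by themselves cannot make a conditional probability vanish. The paper explicitly assumes the lower bound at the start of this proof. Second, the ``main obstacle'' you flag -- decoupling the capacity deficit into local contributions over scale-$r_n$ boxes -- is not something left open at this point of the paper: it is exactly the content of Proposition \ref{prop.corrector} (transfer of the downward deviation to the corrector $\xi_n(T)$) combined with Propositions \ref{prop.Anxin7} and \ref{prop.Anxin5}, whose contrapositive says that $\xi_n(T)>\zeta$ forces $|\widehat \K_i|>\delta L_i$ for some $|i|\le I$ once the events $\{|\widehat \K_i|>AL_i\}$ have been discarded via Theorem \ref{lem-AS} with $A$ large. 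The final passage from $|\widehat \K_i|\ge \delta L_i$ to $\ell_n(\V_n(r_n,\beta\rho_{\textrm{typ}}))\ge\alpha\tau_{\textrm{typ}}$ is the second half of Theorem \ref{lem-AS}. So for \reff{result.path} your plan is correct provided you invoke this already-built machinery rather than re-derive it.

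There is, however, a genuine gap in your argument for \reff{result.capacite}. The chain $\cc{\cR_n}\ge\cp(\cR_n\cap\V_n)\ge c\,\cp(\V_n)$ is not available: monotonicity \reff{cap.mon} gives $\cp(\cR_n\cap\V_n)\le\cp(\V_n)$, not the reverse, and a box can carry local time $\ge\beta\rho_{\textrm{typ}}r_n^d$ while the walk visits only a capacity-negligible subset of its sites, so no lower bound on $\cp(\cR_n\cap\V_n)$ in terms of $\cp(\V_n)$ holds. Moreover, even granting such a bound, no contradiction with the conditioning would follow: $|\V_n|$ is of order $\tau_{\textrm{typ}}/\rho_{\textrm{typ}}$, so $|\V_n|^{1-2/d}$ is of order $\zeta_n$ (for $d\ge 7$), and $\cp(\V_n)>A|\V_n|^{1-2/d}$ would only force $\cc{\cR_n}\gtrsim A\zeta_n$, which is perfectly compatible with $\cc{\cR_n}\approx\gamma_d n-\zeta_n$ since $\zeta_n\le\epsilon n$. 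The correct mechanism is purely entropic, and the paper quotes it as a black box: once \reff{result.path} is known, (1.15) of Theorem 1.5 in \cite{AS20b} states that accumulating local time $\ge\alpha\tau_{\textrm{typ}}$ on a union of $r_n$-boxes of density $\ge\beta\rho_{\textrm{typ}}$ whose capacity exceeds $A$ times the ball-like value $|\V_n|^{1-2/d}$ is super-exponentially unlikely (spreading the occupied boxes out raises the capacity but makes the required local time far more expensive). You gesture at this balancing in your last sentence, but the primary mechanism you propose would not close the argument.
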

Theorem~\ref{thm:scen-5} provides some information on 
the density the random walk has to realize in order to achieve
the deviation. We obtain that $\V_n(r_n,\beta \rho_{\textrm{typ}})$ is typically ball-like, 
in the sense that its capacity is of the order of its volume 
to the power $1-2/d$, as it is the case for Euclidean balls.


The final result concerns the upward deviations. 
Our decomposition \reff{decomp-ASS} allows us
to adapt the argument of Hamana and Kesten, \cite{HK},
written for the volume of the range of a random walk. 

\begin{theorem}\label{theo:upward}
Assume $d\ge 5$. The following limit exists for all $x>0$: 
\begin{equation*}
\psi_d(x):=-\lim_{n\to\infty} \ \frac{1}{n}\log \bP\big(
\cc{\cR_n}\ge  n\cdot x\big).
\end{equation*}
Furthermore, there exists a constant $\gamma_d^*>\gamma_d$ (defined in \reff{cap.limit}), 
such that the function $\psi_d$ is continuous and convex on $[0,\gamma_d^*]$, increasing on $[\gamma_d,\gamma_d^*]$, and satisfies 
\begin{equation*}
\psi_d(x)  \left\{\begin{array}{ll}
 = 0 & \text{if }x\le \gamma_d \\
\in (0, \infty) & \text{if } x\in (\gamma_d,\gamma_d^*]\\
=\infty & \text{if }x>\gamma_d^*.
\end{array}
\right. 
\end{equation*}
\end{theorem}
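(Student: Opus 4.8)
The plan is to mimic the Hamana–Kesten approach for the volume of the range, using a subadditivity argument built on the decomposition \reff{decomp-ASS}. First I would establish the existence of the limit. Write $N_n := \cc{\cR_n}$. The key is an approximate superadditivity/subadditivity relation for the events $\{N_n \ge nx\}$: concatenating two independent walk segments of lengths $m$ and $n$, the range $\cR_{m+n}$ is contained in $\cR_m \cup (S_m + \til\cR_n)$ for an independent copy $\til\cR$, and by subadditivity of capacity $\cc{\cR_{m+n}} \le \cc{\cR_m} + \cc{\til\cR_n}$. Hence $\pr{N_{m+n} \ge (m+n)x} \ge \pr{N_m \ge mx}\,\pr{N_n \ge nx}$ by independence, so $a_n := -\log \pr{N_n \ge nx}$ is subadditive, and Fekete's lemma gives the existence of $\psi_d(x) = \lim a_n/n = \inf_n a_n/n \in [0,\infty]$.

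Next I would identify where $\psi_d$ vanishes. Since $N_n/n \to \gamma_d$ a.s.\ by \reff{cap.limit}, for $x < \gamma_d$ the probability $\pr{N_n \ge nx} \to 1$, so $\psi_d(x) = 0$; continuity (lower semicontinuity from the inf, plus monotonicity) extends this to $x \le \gamma_d$. For the finiteness on an interval to the right of $\gamma_d$, I would produce a single configuration of positive probability forcing $N_n \ge nx$: for instance, confine the walk to a tube or cylinder of cross-section diverging slowly, or more simply use that with probability $\ge e^{-cn}$ the walk can be made to traverse a straight-ish path (or a sequence of well-separated balls) whose capacity is $\ge (\gamma_d + \delta) n$ for some $\delta>0$. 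Any such event of probability $\ge e^{-Cn}$ gives $\psi_d(x) \le C < \infty$ for $x = \gamma_d + \delta$, and monotonicity of $x \mapsto \psi_d(x)$ (immediate from the nesting of events) then yields $\psi_d(x) < \infty$ on $(\gamma_d, \gamma_d^*]$, where $\gamma_d^* := \sup\{x : \psi_d(x) < \infty\}$. One must check $\gamma_d^* > \gamma_d$ strictly, which follows from the construction giving $\delta > 0$, and $\gamma_d^* < \infty$ because $\cc{\cR_n} \le \cc{Q} \le C n$ deterministically once $\cR_n$ is contained in a box of side $O(n)$—actually $\cc{\cR_n}\le |\cR_n| \le n+1$ gives $\gamma_d^* \le 1$, and for $x > \gamma_d^*$ the probability is eventually $0$, so $\psi_d(x) = \infty$.

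Convexity on $[0,\gamma_d^*]$ I would get from a refined concatenation: for $x = \theta x_1 + (1-\theta)x_2$ with $\theta = m/(m+n)$, splitting a length-$(m+n)$ walk and using $\cc{\cR_{m+n}} \ge \max(\cc{\cR_m}, \cc{\til\cR_n})$ together with near-independence and the already-established limit gives $\psi_d(\theta x_1 + (1-\theta)x_2) \le \theta\psi_d(x_1) + (1-\theta)\psi_d(x_2)$ along rational $\theta$; continuity (which on the interior of an interval where the function is finite follows automatically from convexity, and at the endpoints from a separate argument) upgrades this to all $\theta \in [0,1]$. Strict monotonicity on $[\gamma_d, \gamma_d^*]$ follows since $\psi_d$ is convex, equals $0$ at $\gamma_d$, and is finite and positive just to the right; a convex function cannot be constant on a subinterval without being constant up to where it is finite, contradicting $\psi_d(\gamma_d^*) < \infty < \lim_{x \uparrow \text{(anything beyond)}}$. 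The main obstacle is the superadditivity half needed for the lower bound $\psi_d(x) > 0$ when $x > \gamma_d$: one must rule out that the capacity can exceed $(\gamma_d+\delta)n$ at only subexponential cost. Here I would invoke the concentration/upper-tail control implicit in the CLT of \cite{ASS18a} together with the decomposition \reff{decomp-ASS}, which expresses $\cc{\cR_n}$ as a sum of weakly dependent increments with exponential moments, so that a standard Chernoff bound yields $\pr{N_n \ge (\gamma_d+\delta)n} \le e^{-c(\delta)n}$, i.e.\ $\psi_d(\gamma_d+\delta) \ge c(\delta) > 0$; this is exactly the point where the Hamana–Kesten machinery, transported via \reff{decomp-ASS}, does the work.
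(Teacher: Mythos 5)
There is a genuine gap, and it sits at the very center of the argument. Your first step claims that from $\cc{\cR_{m+n}}\le \cc{\cR_m}+\cc{\til\cR_n}$ one gets $\pr{N_{m+n}\ge (m+n)x}\ge \pr{N_m\ge mx}\,\pr{N_n\ge nx}$. The implication goes the wrong way: subadditivity of the capacity gives an \emph{upper} bound on $N_{m+n}$, so the event $\{N_m\ge mx\}\cap\{\til N_n\ge nx\}$ tells you nothing about $N_{m+n}$ being large. What you need is the reverse inequality $\cc{\cR_{m+n}}\ge \cc{\cR_m}+\cc{\til\cR_n}-\chi_\C(\cdot,\cdot)$ from \eqref{decomp-1}, and the cross term $\chi_\C$ can a priori be as large as $\min(\cc{\cR_m},\cc{\til\cR_n})$ by \eqref{borne.cap.AUB}, i.e.\ it can eat the entire gain. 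Controlling this cross term is precisely the content of the Hamana--Kesten machinery that you invoke by name but do not use: one inserts a gap of length $a=(mn)^{1/(d-1)}$ between the two pieces, bounds the law of the displacement $S'_a$ below by $(2d)^{-a}$ times the uniform measure on a cube of side $a/d$, shows that the \emph{average} of the cross term over translations in that cube is $O(a)$ (via \eqref{amin-2}), and concludes by Chebyshev that for at least half the translations the cross term is $O(a)=o(m+n)$. This yields the approximate superadditivity \eqref{amin-1}, with a subexponential cost $\chi^{a(m,n)}$ and a shift $C\,a(m,n)$ in the threshold, and it is this relation (not Fekete applied to $-\log\pr{N_n\ge nx}$, which is not subadditive) that feeds into Hamana--Kesten's general Lemma~3 to produce existence of the limit, convexity and continuity. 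Your convexity paragraph has the same defect: $\cc{\cR_{m+n}}\ge\max(\cc{\cR_m},\cc{\til\cR_n})$ does not give the needed two-threshold inequality; \eqref{amin-1} with $y=nx_1$, $z=mx_2$ does.

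Two secondary points. First, your construction of a strategy achieving capacity $(\gamma_d+\delta)n$ with probability $e^{-Cn}$ is asserted, not proved: it is not obvious that a straight segment (or a tube) has capacity exceeding $\gamma_d n$. The paper instead defines $\gamma_d^*$ as the limit of $c_n/n$, where $c_n$ is the maximal capacity over nearest-neighbour paths of length $n+1$, and proves $\gamma_d^*>\gamma_d$ by comparing no-double-backtrack paths with a time-changed simple random walk (this also guarantees finiteness of $\psi_d$ at the right endpoint $\gamma_d^*$, which your definition $\gamma_d^*=\sup\{x:\psi_d(x)<\infty\}$ does not). Second, for the positivity $\psi_d(x)>0$ when $x>\gamma_d$, beware that in $d=5,6$ the bound of Proposition~\ref{cor:upward} applied directly with $\zeta=\epsilon n$ is only $\exp(-c\epsilon^2 n/(\log n)^3)$, which is subexponential; one must first cut the walk into blocks of a length depending on $x-\gamma_d$ and apply a Bernstein-type bound to the i.i.d.\ block capacities (each bounded by the block length) to extract a genuinely exponential rate, which is how the paper obtains $\psi_d(x)\ge c(x-\gamma_d)^3$.
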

We also obtain Gaussian upper bounds (up to a logarithmic factor)    
in the regime of moderate deviations, see Proposition \ref{cor:upward}.

\paragraph{Our approach to downward deviations.}
The cornerstone of our approach is a decomposition formula obtained
in \cite{ASS18b}: 
\begin{equation}\label{decomp-ASS}
\forall A,B\quad\text{finite sets of }\Z^d,\qquad\quad
\cc{A\cup B}=\cc{A}+\cc{B}-\chi_\CC(A,B),
\end{equation}
where $\chi_\CC(A,B)$ called {\it the cross-term} has a nice expression.
In this work, the decomposition \reff{decomp-ASS} allows us to
follow a simple approach devised in \cite{AS}, and later improved in \cite{AS19}, to study 
downward deviations for the volume of the range in dimensions $d\ge 3$. 
We partition the time-period of length $n$ into
intervals of length $T\le n$, and by iterating \eqref{decomp-ASS} appropriately one can write our functional of the range,
$\cc{\cR_n}$, as a sum of i.i.d. terms minus a certain sum of cross-terms
of the form $\chi_\C(\cR_{iT},\cR[iT,(i+1)T])$, with $i$ going from $1$ to  $\lfloor n/T\rfloor $. 
The so-called corrector, is the sum of these cross-terms that we integrate over $\cR[iT,(i+1)T]$. 
We then show that for some appropriate
time-scale $T$ it is this corrector which is responsible for (most of)  
the deviations.
The final step is to estimate the cost for such deviations. This analysis is similar to the corresponding one  
for the volume of the range that we performed in \cite{AS19}, 
but it also requires some new ingredients, in particular Lemmas \ref{lem-HD}, \ref{lem-asympt} and \ref{lem:rear}.

On the other hand the proof of Theorem \ref{theo.Gaussian} relies on the following estimate, similar to the result for the intersection of two ranges that was obtained in \cite{AS20c}: 
first we observe that $\chi_\C(A,B)$ is bounded above by (twice) 
another functional $\widetilde {\chi}(A,B)$, defined for any
$A,B\subseteq \Z^d$ by
$$\widetilde{\chi}(A,B) := \sum_{x\in A}\sum_{y\in B} 
\bP_x\big(H^+_{A}=\infty \big)\cdot G(x-y)\cdot
\bP_y\big(H^+_{B}=\infty \big). $$ 
We then show that for some $\kappa>0$, 
if $\cR_\infty$ and $\widetilde \cR_\infty$ are the ranges of 
two independent walks, 
\begin{equation}\label{stretch-chi}
\bE\left[\exp\left(\kappa\cdot \widetilde 
\chi(\cR_\infty, \widetilde \cR_\infty)^{1-\frac{2}{d-2}}\right)\right] <\infty.
\end{equation}

\paragraph{Heuristics.} 
We use the sign $\approx$ to express 
that two quantities are {\it of the comparable order} (which here will have a deliberately vague meaning, and precise statements come later). 
As already mentioned, the first
step in this work is a simple decomposition
for the capacity of a union of sets in term of a cross-term 
\begin{equation}\label{heur-1}
\chi_\CC(A,B)\ \approx\ 2\!\sum_{x\in A}\sum_{y\in B} 
\bP_x\big(H^+_{A}=\infty \big)\cdot G(x-y)\cdot
\bP_y\big(H^+_{B}=\infty \big),
\end{equation}
see \reff{decomp-1} and \reff{decomp-2} for
a precise expression. 
The key phenomenon responsible for producing a small
capacity for the range of a random walk is {\it an increase
of the cross-term on an appropriate scale}. In other words, the
walk {\it folds} into a ball-like domain in order to increase
some {\it self-interaction} captured by the cross-term.
Now to be more concrete, 
let us divide the range $\cR[0,2n]$ into two subsets
$\cR[0,n]$ and $\cR[n,2n]$. Let us call, for simplicity $\cR_n^1=\cR[0,n]-S_n$,
and $\cR_n^2=\cR[n,2n]-S_n$ the two subranges translated by $S_n$ so that they
become independent. By \eqref{heur-1} and translation invariance of the capacity we see that  
\[
\cc{\cR[0,2n]}=\cc{\cR_n^1}+\cc{\cR_n^2}-\chi_\CC(\cR_n^1,\cR_n^2).
\]
Now, assume that both walks stay inside a ball of radius $R$ a time
of order $\tau\le n$, and are unconstrained afterward. 
Thus, under the strategy we mentioned,
\begin{equation}\label{heur-4}
\begin{split}
\chi_\CC(\cR_n^1,\cR_n^2)\ \approx\  & G(R)\times \cc{\cR_\tau^1}
\times \cc{\cR_\tau^2}+\cO\big(G(\sqrt n)n^2\big)\\
\ \approx\ & G(R)\big( \min(\tau,R^{d-2})\big)^2+\cO\big(n^{\frac{6-d}{2}}\big).
\end{split}
\end{equation}
The term $\cO\big(G(\sqrt n)n^2\big)$ appears if $\tau$ is smaller than $n$,
and accounts for the unconstrained contribution to the cross-term.
In obtaining \reff{heur-4}, we have used that if $\cR_\tau^1$ and $\cR_\tau^2$ are inside
a ball of radius $R$, then their capacity is bounded by the
capacity of the ball, which is of order $R^{d-2}$, as well as by
their volume bounded by $\tau$.
Thus, it is useless to consider $\tau$ larger than $R^{d-2}$,
since then $\tau$ no more affects the cross-term and increasing $\tau$ (or decreasing $R$ below $\tau$) only makes the strategy more costly.
Now a deviation of order $\zeta$ is reached if 
\begin{equation}\label{heur-2}
\frac{1}{R^{d-2}} \tau^2\approx \zeta.
\end{equation}
Recall that the cost of being localized a time $\tau$ in a ball of radius $R$ is of order $\exp(-\tau/R^2)$ (up to a constant in the exponential). 
So we need to find a choice of $(\tau,R)$ which minimizes this cost under the
constraint \reff{heur-2}. In other words one needs to maximize $\sqrt \zeta\cdot R^{(d-6)/2}$.
This leads to two regimes.
\begin{itemize}
\item When $d=5$, $R$ (and then $\tau$) is as large as possible. So, $\tau=n$ and
$R^{d-2}=n^2/\zeta$ by \reff{heur-2}. The strategy is time homogeneous for any 
$\zeta$!
\item When $d\ge 7$, then $\tau$ is as small as possible,
that is $\tau=R^{d-2}=\zeta$. The strategy is time-inhomogeneous.
\end{itemize}
When $d=6$, the strategy remains unknown, but the cost should
be of order $\exp(-\sqrt \zeta)$.

\paragraph{Application to a polymer melt.} 
The model of random interlacements, introduced by Sznitman \cite{S10}, is roughly speaking 
the union of the ranges of trajectories obtained by a Poisson
point process on the space of doubly infinite trajectories, and is such that
the probability of avoiding a set $K$ is $\exp(-u\cdot \cc{K})$, where
$u>0$ is a fixed parameter.
With this in mind, let us consider the following model of polymer
among a polymer melt interacting by exclusion. 
We distinguish one polymer, a simple random walk, interacting with
a cloud of other random walk trajectories modeled by
random interlacements which we call for short {\it the melt}. 
The interaction is through exclusion:
the walk and the melt do not intersect. When integrating
over the interlacements law, the measure on the walk with the effective
interaction has a density proportional to $\exp(-u \cdot \cc{\cR_n})$, with respect to the law of a simple random walk. 

As a corollary of our deviation estimates, one can address some issues
on this polymer. 
Since this follows in the same way as the study of the Gibbs 
measure tilted by the volume of the range was a corollary of \cite{AS}, we 
repeat neither the statements corresponding to Theorem 1.8 of \cite{AS},
nor the proofs here. 
The simplest and most notable difference with the latter theorem is that
the proper scaling of the intensity parameter $u$ which provides a phase
transition is when it is of order $n^{-2/(d-2)}$ in dimension $d\ge 5$.
Thus, one would consider the polymer partition function as a function
of $u\in \R^+$
\[
Z_n(u)=\bE\Big[\exp\big(-\frac{u}{n^{2/(d-2)}}(
\cc{\cR_n}-\bE[\cc{\cR_n}])\ \big)\Big].
\]
Theorem 1.8 of \cite{AS} is true, here also after the drop in dimension
is performed, and establishes the existence of a phase transition 
as one tunes $u$.
On the other hand, considering the quenched model, where the random interlacements is given a typical realization, 
is an interesting open problem, beyond
the reach of the present techniques.

\paragraph{Organization.} The paper is organized as follows. In the next section, we recall some basic estimates on the random walk, the capacity, and the range that we will need.  Section~\ref{sec-martin}, and more precisely Proposition~\ref{prop.corrector} 
makes the link between downward deviations for the capacity and upward deviations of a corrector. 
The corrector itself
is studied in Section~\ref{sec-UB}, where we prove the upper bounds in 
Theorems~\ref{thm:d6} and \ref{thm:d5}, as well as Theorem \ref{thm:scen-5}. 
In Section \ref{sec-LB}, we prove the lower 
bounds in Theorems~\ref{thm:d6} and~\ref{thm:d5}.
The proof of Theorem \ref{theo.Gaussian} is done in Section \ref{sec.Gaussian}. Finally, we prove Theorem~\ref{theo:upward} concerning
the upward deviations in Section~\ref{sec-HK}. 

\section{Preliminaries}\label{sec-prelim}
\subsection{Further notation}
For $z\in \Z^d$, $d\ge 5$, we denote by $\bP_z$ the law of the simple random walk starting from $z$, and simply write it $\bP$ when $z=0$. We let 
$$G(z):=\mathbb E\left[\sum_{n=0}^\infty {\bf 1}\{S_n=z\}\right],$$
be the Green's function. 
It is known (see \cite{Law91}) that for some positive constants $c$ and $C$, 
\begin{equation}\label{Green}
\frac{c}{\|z\|^{d-2}+1}\ \le \ G(z)\ \le  \ \frac{C}{\|z\|^{d-2}+1},\quad \text{for all }z\in \Z^d,
\end{equation}
with $\|\cdot \|$ the Euclidean norm. 
We also consider for $T>0$, and $z\in \Z^d$,
$$G_T(z):=\mathbb E\left[\sum_{n=0}^T {\bf 1}\{S_n=z\}\right].$$
In particular for any $z\in \Z^d$, and $T\ge 1$,  
\begin{equation}\label{inGT}
\mathbb P(z\in \cR_T) \le G_T(z).
\end{equation}
For $A\subset \Z^d$, we denote by $|A|$ the cardinality of $A$, and by 
$$H_A:=\inf\{n\ge 0\, :\, S_n\in A\},\quad \text{and}\quad H_A^+:= \inf\{n\ge 1\, :\, S_n\in A\},$$
respectively the hitting time of $A$ and the first return time to $A$.

We also need the following well known fact, see \cite{Law91}. 
There exists a constant $C>0$, such that for any $R>0$ and $z\in \Z^d$,  
\begin{equation}\label{hit}
\bP_z\left(\inf_{k\ge 0}\|S_k\|\le R\right) \le C\cdot  \left(\frac{R}{\|z\|}\right)^{d-2}.
\end{equation}

\subsection{On the capacity}\label{sec-capa}
The capacity of a finite subset $A\subset \Z^d$, with $d\ge 3$, is defined by 
\begin{equation}\label{cap.def}
\cp(A):=\lim_{\|z\|\to \infty} \, \frac{1}{G(z)} \bP_z(H_A<\infty).
\end{equation}
It is well known, see Proposition 2.2.1 of \cite{Law91}, 
that the capacity is monotone for inclusion:
\begin{equation}\label{cap.mon}
\cp(A)\le \cp(B), \quad \text{for any }A\subset B,
\end{equation}
and satisfies the sub-additivity relation
\begin{equation}\label{cap.subadd}
\cp(A\cup B)\le \cp(A) +\cp (B) - \cp(A\cap B),\quad \text{for all }A,B\subset \Z^d.
\end{equation}
Another equivalent definition of the capacity is the following 
(see (2.12) of  \cite{Law91}). 
\begin{equation}\label{cap.def2}
\cp(A) =\sum_{x\in A} \bP_x(H_{A^+}=\infty).
\end{equation}
In particular it implies that 
\begin{equation}\label{cap.card}
\cp(A) \le |A|, \quad \text{for all }A\subset \Z^d.
\end{equation}
The starting point for our decomposition is the definition \eqref{cap.def} 
of the capacity in terms of a hitting time. 
It implies that for any two finite subsets $A,B\subset \Z^d$, 
\begin{equation}\label{decomp-1}
\cc{A\cup B}=\cc{A}+\cc{B}-\chi_\C(A,B),
\end{equation}
with 
\begin{equation*}
\chi_\C(A,B):=\lim_{z\to\infty} \frac{1}{G(z)}
\bP_z\big(\{H_{A}<\infty\}\cap \{H_{B}<\infty\}\big).
\end{equation*}
In particular by \eqref{cap.def} and the latter formula, one has 
\begin{equation}\label{borne.cap.AUB}
0\le \chi_\C(A,B) \le \min(\cp(A),\cp(B)).
\end{equation}
Now, we have shown in \cite{ASS19} that
\begin{equation}\label{decomp.chiC}
\chi_\C(A,B)=\chi(A,B) + \chi(B,A) -\varepsilon(A,B),
\end{equation}
with 
\begin{equation}\label{decomp-2}
\chi(A,B)=\sum_{x\in A}\sum_{y\in B} 
\bP_x\big(H^+_{A\cup B}=\infty \big)\cdot G(x-y)\cdot
\bP_y\big(H^+_{B}=\infty \big),
\end{equation}
and,
\begin{equation}\label{decomp-2bis}
0\le \varepsilon(A,B)\le \cc{A\cap B}\le |A\cap B|,
\end{equation}
where the last inequality follows from \eqref{cap.card}.

We will need some control on the speed of convergence in \eqref{cap.limit}. 
\begin{lemma}\label{lem.exp.cap}
Assume $d\ge 5$. One has 
\begin{equation*}
\left| \mathbb E[\cp(\cR_n)] - \gamma_d n \right| = \mathcal O(\psi_d(n)),
\end{equation*}
with 
\begin{equation*}
\psi_d(n) = \left\{
\begin{array}{ll} 
\sqrt n & \text{if }d=5\\
\log n & \text{if }d=6\\
1 & \text{if }d\ge 7.
\end{array}
\right. 
\end{equation*} 
\end{lemma}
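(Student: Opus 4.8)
The plan is to control the speed of convergence in \reff{cap.limit} by a subadditivity-plus-error argument based on the decomposition \reff{decomp-1}. Writing $a_n := \bE[\cp(\cR_n)]$, the identity \reff{decomp-1} applied to $A = \cR_m$ and $B = \cR[m, m+n]$ (which, after translating by $-S_m$, has the same law as an independent copy of $\cR_n$) gives
\begin{equation*}
a_{m+n} = a_m + a_n - \bE[\chi_\C(\cR_m, \cR[m,m+n])].
\end{equation*}
Since $\chi_\C \ge 0$ by \reff{borne.cap.AUB}, the sequence $(a_n)$ is subadditive, so $a_n \ge \gamma_d n$ with $\gamma_d = \lim a_n/n = \inf a_n/n$; thus it suffices to bound the error term $\delta_n := a_{2n} - 2a_n = -\bE[\chi_\C(\cR_n^1, \cR_n^2)]$ (for two independent ranges $\cR_n^1, \cR_n^2$) and then sum a dyadic telescoping series: $\gamma_d - a_n/n = \sum_{k \ge 0} \big(a_{2^{k+1}n}/2^{k+1}n - a_{2^k n}/2^k n\big) = -\sum_{k\ge 0} \delta_{2^k n}/2^{k+1}n$, so that $|a_n - \gamma_d n| \le \sum_{k \ge 0} 2^{-k-1}|\delta_{2^k n}|$. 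Hence the whole lemma reduces to the estimate
\begin{equation}\label{plan-chi-bd}
\bE[\chi_\C(\cR_n^1, \cR_n^2)] = \mathcal O(\psi_d(n)).
\end{equation}

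To prove \reff{plan-chi-bd} I would use \reff{borne.cap.AUB} and \reff{decomp.chiC}, which bound $\chi_\C(A,B) \le \chi(A,B) + \chi(B,A) \le \sum_{x \in A}\sum_{y \in B} \bP_x(H_A^+ = \infty) G(x-y) \bP_y(H_B^+ = \infty) + (\text{symmetric})$, and in particular $\chi_\C(A,B) \le 2\sum_{x \in A} \sum_{y\in B} G(x-y)$ after dropping the escape probabilities. Taking expectations over the two independent walks and using \reff{inGT} to write $\bP(x \in \cR_n) \le G_n(x)$, one gets $\bE[\chi_\C(\cR_n^1,\cR_n^2)] \le 2 \sum_{x,y} G_n(x) G(x-y) G_n(y)$. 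Shifting so that one endpoint sits at the origin, this is of order $n \cdot \sum_{z} G_n(z)$-type sums convolved with $G$; the standard bounds \reff{Green} on $G$ and the random-walk heat-kernel bound $G_n(z) \lesssim \|z\|^{2-d} \wedge (n/\|z\|^{d})$-type estimates then give, by splitting the sum according to $\|z\| \lessgtr \sqrt n$, the order $\sqrt n$ in $d=5$, $\log n$ in $d=6$, and a convergent sum (hence $\mathcal O(1)$) in $d \ge 7$ — exactly the threshold $6-d$ appearing in the heuristic \reff{heur-4}. Concretely: $\sum_{x,y} G_n(x) G(x-y) G_n(y) \asymp \sum_{u} G_n(u)^{*2}(v)\cdots$, and the triple-convolution of the Green's function truncated at time $n$ behaves like $n^{(6-d)/2}$ for $d < 6$, like $n\log n$ for $d=6$, and like $n$ for $d \ge 7$; dividing by the extra factor $n$ coming from the outer sum over one space variable being replaced by a time sum yields $\psi_d(n)$.

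The main obstacle is getting the convolution sum $\sum_{x,y} G_n(x)\,G(x-y)\,G_n(y)$ sharp rather than off by a factor of $n$ or a logarithm; one must be careful to truncate the Green's functions at time $n$ (using $G_n$, not $G$) so that the $d=5$ bound comes out as $\sqrt n$ and not $n$, and in $d=6$ one must correctly extract the single logarithm. This requires the localized heat-kernel estimate $G_n(z) \le C \min\{\|z\|^{2-d}, n^{-d/2} e^{-c\|z\|^2/n} + \ldots\}$ type bound (or, more elementarily, $\sum_z G_n(z) \le n+1$ together with $\sum_{\|z\| \le r} G(z) \lesssim r^2$ and $\sum_{\|z\|>r} G_n(z)\,\|z\|^{-\alpha}$ tail bounds), plus a careful three-region decomposition of the lattice sum. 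All of these are classical and can be imported from \cite{Law91}; the rest of the argument is the soft subadditivity telescoping described above.
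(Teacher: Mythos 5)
Your proposal is correct and follows essentially the same route as the paper, which likewise derives approximate super/sub-additivity of $\bE[\cp(\cR_n)]$ from \eqref{decomp-1} with error term $2\sum_{k\le n}\sum_{\ell\ge n}G(S_k-S_\ell)$ and concludes via Hammersley's lemma together with moment bounds on this Green's sum (citing \cite{AS2} and \cite{ASS18a} for the details that you instead carry out by dyadic telescoping and a direct computation). One harmless bookkeeping slip: $\sum_{x,y}G_n(x)G(x-y)G_n(y)=\sum_{k,\ell=0}^{n}\bE[G(S_{k+\ell})]\asymp\sum_{j\le 2n}(1+j)^{(4-d)/2}$ is already $\mathcal O(\psi_d(n))$ with no ``extra factor $n$'' to divide out, so the intermediate orders $n^{(6-d)/2}$, $n\log n$, $n$ you quote are not consistent with that final step, though your conclusion is right.
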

\begin{proof}
By \eqref{decomp-1}, \eqref{decomp.chiC}, \eqref{decomp-2}, and \eqref{decomp-2bis} one has the rough lower bound:  
\begin{equation}\label{decomp.lemma}
\cp(\RR_{n+m}) \ge \cp(\RR_n) + \cp(\RR[n,n+m]) - 2\sum_{k=0}^n \sum_{\ell = n}^{n+m} G(S_k -S_\ell),
\end{equation}
for any integers $n,m\ge 1$ (a better inequality will be used later, but this one is enough here). 
Then one concludes exactly as in \cite{AS2}, using \eqref{cap.subadd}, Hammersley's  lemma  and Lemma 3.2 in \cite{ASS18a}, 
which controls the moments of the error term in the right-hand side of \eqref{decomp.lemma}. 
For the details, we refer to the proof of (1.13) in \cite{AS2}.   
\end{proof}

The next result provides some useful bounds on the variance of the capacity of the range, which were obtained in \cite{Sch19} in case of dimension $5$, and in \cite{ASS18a} in higher dimension. 
\begin{proposition}\label{lem.var.cap}
One has, 
\begin{eqnarray*}
\var(\cp(\RR_n)) =\left\{
\begin{array}{ll} 
\cO( n \log n) & \text{if }d=5\\
\cO(n) & \text{if }d\ge 6.
\end{array}
\right. 
\end{eqnarray*}
\end{proposition}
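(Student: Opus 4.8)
The plan is to bound the variance by the Efron--Stein inequality applied to the i.i.d.\ increments $X_1,\dots,X_n$ of the walk ($S_k=X_1+\dots+X_k$). Let $X_k'$ be an independent copy of $X_k$ and let $\RR_n^{(k)}$ be the range of the walk obtained by replacing $X_k$ with $X_k'$; Efron--Stein gives
\begin{equation*}
\var(\cp(\RR_n))\ \le\ \frac12\sum_{k=1}^n\E{\big(\cp(\RR_n)-\cp(\RR_n^{(k)})\big)^2},
\end{equation*}
so it is enough to control each summand. The resampling leaves $A:=\RR[0,k-1]$ untouched and merely translates $B:=\RR[k,n]$ by $v:=X_k'-X_k$, which has $\|v\|\le 2$; since $\RR_n=A\cup B$ and $\RR_n^{(k)}=A\cup(B+v)$, the exact decomposition \eqref{decomp-1} together with translation invariance of the capacity collapses the difference to $\cp(\RR_n)-\cp(\RR_n^{(k)})=\chi_\C(A,B+v)-\chi_\C(A,B)$. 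Thus the whole problem reduces to estimating, in $L^2$ and summed over the $n$ cut points $k$, the sensitivity of the cross term to a bounded translation of one of its two arguments (which, after conditioning on $S_{k-1}$, are independent ranges of lengths of order $k$ and $n-k$).

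Next I would open up $\chi_\C$ via \eqref{decomp.chiC}--\eqref{decomp-2bis}. Reindexing $B+v$ and using that $\bP_y(H^+_B=\infty)$ is translation invariant, one sees that $\chi_\C(A,B+v)-\chi_\C(A,B)$ splits into a \emph{gradient} term, which the bound $\|\nabla G(z)\|=\cO(\|z\|^{-(d-1)})$ (a standard consequence of \eqref{Green}) controls by $\cO\big(\sum_{x\in A}\sum_{y\in B}\|x-y\|^{-(d-1)}\big)$, and an \emph{escape} term stemming from the variation of the probabilities $\bP_x(H^+_{A\cup B}=\infty)$, which one handles through $\big|\bP_x(H^+_{A\cup B}=\infty)-\bP_x(H^+_{A\cup(B+v)}=\infty)\big|\le C\sum_{z\in B\,\triangle\,(B+v)}G(x-z)$; the term involving $\varepsilon(A,B)$ is negligible by \eqref{decomp-2bis}, intersections of (essentially) independent ranges being small in dimension $d\ge 5$. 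Squaring, taking expectations, and using the independence of $A$ and $B$ reduces all of this to multi-point sums of Green's functions over two independent random-walk ranges — exactly the kind of quantity controlled by the moment estimates of \cite[Lemma~3.2]{ASS18a} and by Lawler's intersection estimates \cite{Law91}. For $d\ge 6$ the exponents make every relevant sum convergent, so $\E{(\cp(\RR_n)-\cp(\RR_n^{(k)}))^2}=\cO(1)$ uniformly in $k$, and summing over $k$ gives $\var(\cp(\RR_n))=\cO(n)$.

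The hard case, which is where the main work lies, is $d=5$. There the Green's-function sums above are only logarithmically divergent, so the crude estimate obtained by taking absolute values before summing yields $\E{(\cp(\RR_n)-\cp(\RR_n^{(k)}))^2}=\cO((\log n)^2)$, hence merely $\var(\cp(\RR_n))=\cO(n(\log n)^2)$ — one logarithm away from the announced $\cO(n\log n)$ (cf.\ Remark~\ref{rem-dev-d5}). Reaching the sharp order forces one to keep the signs and exploit the cancellations in $\chi_\C(A,B+v)-\chi_\C(A,B)$, rather than bounding it in modulus: both in the gradient term, where $\nabla G(S_i-S_j)\cdot v$ changes sign along the path, and, with comparable care, in the escape term. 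This is the genuinely delicate point, and it is where the detailed analysis of \cite{Sch19} is needed; for the complete arguments we refer to \cite{Sch19} for $d=5$ and to \cite{ASS18a} for $d\ge 6$.
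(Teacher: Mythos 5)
The paper does not actually prove this proposition: it is stated as a quotation of results established elsewhere, in \cite{Sch19} for $d=5$ and in \cite{ASS18a} for $d\ge 6$ (see the sentence immediately preceding the statement). Since your argument also ends by deferring to those same two references for the complete proofs, your proposal is, at the level of what logically establishes the statement, in line with the paper. The Efron--Stein reduction you sketch on top of that is a genuinely different route from the one taken in the cited papers (which work with the decomposition \eqref{decomp-1} applied along a dyadic scheme together with direct second-moment bounds on the cross terms), and its first half is sound: resampling $X_k$ indeed fixes $A=\RR[0,k-1]$ and translates $B=\RR[k,n]$ by $v$, and \eqref{decomp-1} with translation invariance does collapse the difference to $\chi_\C(A,B+v)-\chi_\C(A,B)$.

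Two caveats, however. First, the bound you propose for the ``escape'' term is not usable as stated: $B\Delta(B+v)$ is in general of cardinality comparable to $|B|$ (shifting a range by a single lattice step does not make the symmetric difference small), so $\sum_{z\in B\Delta(B+v)}G(x-z)$ is of the same order as $\sum_{z\in B}G(x-z)$, and the resulting estimate is no better than bounding $\chi_\C(A,B)$ and $\chi_\C(A,B+v)$ separately --- far too crude, since already in $d=5$ the first moment of the cross term between two independent ranges of length of order $n$ is of order $\sqrt n$, which after squaring and summing over $k$ would give $n^2$. Controlling the variation of $\bP_x\big(H^+_{A\cup B}=\infty\big)$ under a translation of $B$ requires a different device (for instance a last-exit decomposition or a coupling of the two conditioned walks), and this is precisely where the real work in \cite{ASS18a} and \cite{Sch19} lies. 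Second, as you correctly note yourself, even a repaired version of the gradient computation only yields $\cO(n(\log n)^2)$ in $d=5$, one logarithm short of the claim; so for the statement as written you are in any case relying on \cite{Sch19}, exactly as the paper does.
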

\begin{remark}
\emph{
Actually sharp asymptotics are known: in dimension $5$, one has $\var(\cp(\RR_n))\sim \sigma_5 n\log n$, and in higher dimension $\var(\cp(\RR_n))\sim \sigma_dn$, for some positive constant $(\sigma_d)_{d\ge 5}$, see respectively \cite{Sch19} and \cite{ASS18a}.  }
\end{remark}

As a consequence of the previous results one can obtain Gaussian type upper bounds for the moderate deviations in the upward deviations. 
\begin{proposition}\label{cor:upward} 
There exist positive constants $(c_d)_{d\ge 5}$, such that for any $n\ge 2$, and $\zeta>0$, 
\begin{eqnarray*}
\bP\big(\cc{\cR_n} - \bE[\cc{\cR_n}]\ge  \zeta \big) \le \left\{
\begin{array}{lll}
\exp\left(-c_5\cdot \frac{\zeta^2}{n(\log n)^3}\right) &  \text{if }d=5\\
\exp\left(-c_6\cdot \frac{\zeta^2}{n(\log \log n)^2}\right) & \text{if }d=6\\
\exp\left(-c_d\cdot \frac{\zeta^2}n\right) & \text{if }d\ge 7.
\end{array}
\right.
\end{eqnarray*}
\end{proposition}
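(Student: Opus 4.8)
The plan is to reduce to a sum of independent blocks via the iterated subadditive decomposition \eqref{decomp-1}. Fix a scale $T\le n$ (to be chosen as a function of $d$, $n$ and $\zeta$), set $m=\lfloor n/T\rfloor$, and iterate \eqref{decomp-1}--\eqref{decomp-2bis} along the time intervals $[(i-1)T,iT]$, $i=1,\dots,m$, using \eqref{cap.subadd} for the leftover interval $[mT,n]$. Since the cross-terms $\chi_\C$ and the errors in \eqref{decomp.chiC}--\eqref{decomp-2bis} are nonnegative, this gives
\[
\cc{\cR_n}\ \le\ \Sigma_n+T,\qquad \Sigma_n:=\sum_{i=1}^m Y_i,\quad Y_i:=\cp(\cR[(i-1)T,iT]),
\]
where the $Y_i$ are i.i.d.\ with $Y_i\eqdist\cp(\cR_T)$ and $0\le Y_i\le T+1$ by \eqref{cap.card}. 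Taking expectations and using Lemma~\ref{lem.exp.cap} at the scales $n$ and $T$,
\[
\bE[\Sigma_n]+T-\bE[\cc{\cR_n}]\ =\ \mathcal O\!\Big(T+\tfrac nT\,\psi_d(T)+\psi_d(n)\Big),
\]
with $\psi_d$ the error term of Lemma~\ref{lem.exp.cap}; in particular, if $T$ is large enough that the right-hand side is at most $\zeta/2$, then
\[
\bP\big(\cc{\cR_n}-\bE[\cc{\cR_n}]\ge\zeta\big)\ \le\ \bP\big(\Sigma_n-\bE[\Sigma_n]\ge\tfrac\zeta2\big).
\]

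For the last probability I would use Bernstein's inequality for the bounded i.i.d.\ sum $\Sigma_n$: by Proposition~\ref{lem.var.cap}, $\mathrm{Var}(\Sigma_n)=m\,\mathrm{Var}(\cp(\cR_T))$ is $\mathcal O(n)$ for $d\ge6$ and $\mathcal O(n\log n)$ for $d=5$, and the increments are bounded by $T+1$, so
\[
\bP\big(\Sigma_n-\bE[\Sigma_n]\ge\tfrac\zeta2\big)\ \le\ \exp\!\Big(-c\,\frac{\zeta^2}{\mathrm{Var}(\Sigma_n)+T\zeta}\Big).
\]
When $d\ge7$ this already yields the claim: here $\psi_d\equiv1$, the mean gap is $\mathcal O(T+n/T)$, which is $\le\zeta/2$ once $T$ is a suitable multiple of $n/\zeta$ and $\zeta\ge A\sqrt n$ for a large constant $A$, and then $\mathrm{Var}(\Sigma_n)+T\zeta=\mathcal O(n)$, so $\bP(\cc{\cR_n}-\bE[\cc{\cR_n}]\ge\zeta)\le\exp(-c_d\zeta^2/n)$ on $A\sqrt n\le\zeta\le n$. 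For $\zeta>n+1$ the probability is $0$ since $\cp(\cR_n)\le|\cR_n|\le n+1$, and for $\zeta\le A\sqrt n$ the right-hand side of the asserted inequality exceeds a fixed positive constant, so it holds after shrinking $c_d$ (using that $\bP(\cc{\cR_n}\le\bE[\cc{\cR_n}])$ stays bounded away from $0$).

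The delicate cases are $d=5$ and $d=6$, and this is where I expect the main difficulty. There the constraint that the mean gap be $\le\zeta/2$ forces $T$ to be large --- of order $(n/\zeta)^2$ for $d=5$, of order $n\log n/\zeta$ for $d=6$ --- and then the term $T\zeta$ in Bernstein's inequality is dominant, so the one-block-scale argument above only reaches the large-deviation rate, respectively a denominator of order $n\log n$, on part of the moderate range. To get the stated denominators $n(\log n)^3$ and $n(\log\log n)^2$ one has to run a refined, multi-scale version of this scheme: either keep the corrector $\chi_n:=\Sigma_n-\cc{\cR_n}$ and control its downward fluctuations through its variance (bounded via Proposition~\ref{lem.var.cap}) plus a one-sided exponential-moment estimate for the nonnegative cross-terms, thereby decoupling the choice of $T$ from the mean constraint, or bootstrap the block bound, replacing $Y_i\le T+1$ by a concentration estimate for $Y_i-\bE[Y_i]$ at the smaller scale $T$ and re-optimising over a geometric sequence of scales; the polylogarithmic losses are the price of this balancing. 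Apart from this last step, the proof is a direct consequence of \eqref{decomp-1}, Lemma~\ref{lem.exp.cap} and Proposition~\ref{lem.var.cap}.
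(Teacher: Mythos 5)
Your single--scale argument is sound and essentially complete for $d\ge 7$: dropping the nonnegative cross-terms, controlling the mean gap via Lemma~\ref{lem.exp.cap} with $T\asymp n/\zeta$, and applying Bernstein's inequality does give $\exp(-c_d\zeta^2/n)$ on the range $\zeta\gtrsim\sqrt n$ (and very small $\zeta$ can be handled by Cantelli's inequality together with Proposition~\ref{lem.var.cap}, rather than by the somewhat vaguer appeal to $\bP(\cc{\cR_n}\le\bE[\cc{\cR_n}])$ being bounded below). This differs from the paper only in that the paper uses a dyadic rather than a single-scale decomposition even in high dimension, but for $d\ge7$ the two are interchangeable.

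For $d=5$ and $d=6$, however, there is a genuine gap: you correctly diagnose that forcing the mean of the discarded cross-terms below $\zeta/2$ makes $T$ too large for Bernstein, and you then only \emph{name} two candidate repairs without carrying either out. The paper's actual mechanism is the second one you mention, and its execution is not a routine optimisation: one iterates \eqref{decomp-ASS} along a dyadic scheme, writing $\cc{\cR_n}=\sum_{i\le 2^L}\cc{\cR^{(i)}_{m_L}}-\sum_{\ell\le L}\Sigma_\ell$ with $m_L\asymp n/\zeta$, where $\Sigma_\ell$ is the (nonnegative) sum of the $2^{\ell-1}$ cross-terms at level $\ell$. An upward deviation of $\cc{\cR_n}$ then requires either an upward deviation of the i.i.d.\ block sum (Bernstein, denominator $n\log m_L$ in $d=5$) or a \emph{downward} deviation $\bE[\Sigma_\ell]-\Sigma_\ell>\zeta/(2L)$ at some level. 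The two ingredients you are missing are: (i) positivity of $\Sigma_\ell$ plus the bound $\bE[\Sigma_\ell]=\cO(2^{\ell/2}\sqrt n)$ for $d=5$ (resp.\ $\cO(2^\ell\log n)$ for $d=6$), coming from Lemma~\ref{lem.exp.cap}, which makes all levels with small mean contribute probability zero; and (ii) a martingale concentration inequality (Theorem 4.4 of \cite{CL}) for the surviving levels, each term of $\Sigma_\ell$ being bounded by $m_\ell$ via \eqref{borne.cap.AUB} and \eqref{cap.card}. The union over the $L\asymp\log\zeta$ levels and the splitting of $\zeta$ into $\zeta/(2L)$ pieces is precisely what produces the $(\log n)^3$ and $(\log\log n)^2$ losses in the statement; without steps (i)--(ii) your proof does not reach the claimed denominators in $d=5,6$.
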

\begin{proof}
For simplicity let us concentrate on the proof when $d=5$. 
We will explain at the end the necessary modifications to the proof when $d\ge 6$. 
Note first that one can always assume that $\zeta$ is smaller than $n/2$. 
We use now \eqref{decomp-ASS} repeatedly along a dyadic decomposition of $\{0,\dots,n\}$. This gives for $L\ge 1$, with $m_L:=\lfloor n/2^L\rfloor$, 
$$\cc{\cR_n} = \sum_{i=1}^{2^L}  \cc{\cR^{(i)}_{m_L}} - \sum_{\ell = 1}^L \Sigma_\ell,$$
where the $\cR^{(i)}_{m_L}$ are consecutive pieces of the range of length either $m_L$ or $m_L+1$, and 
$$\Sigma_\ell := \sum_{j=1}^{2^{\ell - 1}} \chi_C(\RR^{(2j-1)}_{m_\ell},\RR^{(2j)}_{m_\ell}),$$
with similar notation as above, in particular $m_\ell = \lfloor n/2^\ell\rfloor$. 
Thus, 
\begin{align}\label{proof.upward}
\nonumber \bP\big(\cc{\cR_n}-\bE[\cc{\cR_n} ]> \zeta \big)  \le\  & 
  \bP\left(\sum_{i=1}^{2^L}  \cc{\cR^{(i)}_{m_L}}-\bE[\cc{\cR^{(i)}_{m_L}}] >\frac \zeta 2\right)\\
& + \sum_{\ell = 1}^{L} \bP\left( \bE[\Sigma_\ell] - \Sigma_\ell > \frac{\zeta}{2L}\right).
\end{align}
We fix now $L$, such that $n/\zeta \le m_L  \le 2n/\zeta$. 
The first term in \eqref{proof.upward} is ruled out using Bernstein's inequality and Proposition \ref{lem.var.cap}, which give 
for some constant $c>0$. 
\begin{eqnarray}\label{proof.upward2} 
\bP\left(\sum_{i=1}^{2^L}   \cc{\cR^{(i)}_{m_L}}-\bE[\cc{\cR^{(i)}_{m_L}}] >\zeta/2\right) \le \exp\left(-c \frac{\zeta^2}{n \log m_L}\right).
\end{eqnarray}
Concerning the sum in \eqref{proof.upward}, note first that by Lemma \ref{lem.exp.cap}, one has 
$$\bE[\Sigma_\ell] = \cO(2^{\ell/2} \sqrt n) ,$$
for any $\ell \ge 1$. Therefore, one can assume that $\ell$ is such that $2^{\ell/2} \sqrt{n} > c\zeta/L$, for some constant $c>0$, for otherwise the corresponding probability is zero.  
For such $\ell$ one has by using standard concentration results (see Theorem 4.4. in \cite{CL}): 
$$\bP\left( \bE[\Sigma_\ell] - \Sigma_\ell > \frac{\zeta}{2L}\right) \le \exp\left(-c\frac{(\zeta/L)}{ \sqrt{m_\ell} +L n (\log m_\ell)/\zeta}\right) \le \exp\left(-\frac{c\zeta^2}{n(\log n)^3}\right),$$
which completes the proof in case $d=5$. In case $d\ge 6$, the variance is linear. So first, the term $\log m_L$ can be removed in \eqref{proof.upward2}. 
Moreover, in case $d=6$, one has $\bE[\Sigma_\ell ] = \cO(2^\ell \log n)$, and thus only the $\ell$'s such that $\zeta \ge 2^\ell \ge c\zeta/\log n$ need to be considered.   
There are order $\log \log n$ such integers, and for each of them one has by the same argument as above, 
$$ \bP\left( \bE[\Sigma_\ell] - \Sigma_\ell > \frac{\zeta}{C\log \log n}\right)\le \exp(-c\zeta^2/(\log \log n)^2),$$
which concludes the proof in case $d=6$. The case $d\ge 7$ is similar, since this time $\bE[\Sigma_\ell ] = \cO(2^\ell)$, and thus there are only a bounded number of integers $\ell$'s that need to be considered.   
\end{proof}

\section{Transfer of downward deviations to the corrector}\label{sec-martin}
The possibility of establishing the heuristic picture
described in the introduction stems from writing 
the capacity of a union of sets as a sum of capacities
and a cross-term. The latter though typically small is nonetheless
responsible for the fluctuations. 
Iterating this decomposition leads to an expression of the capacity of the range as a sum of i.i.d. terms minus a sum of cross-terms. 
The so-called corrector is obtained by summing appropriate conditional expectations of these cross-terms.

Our first result in this section, Lemma \ref{lem.corrector}, provides an explicit expression for (what turns out to be an upper bound for) this corrector in terms of a  sum of convoluted 
Green's functions taken along the trajectory and 
weighted by escape probability terms. We then recall a result from \cite{AS19}, which relates the deviations of the capacity to those of the corrector, which we state here as Proposition \ref{prop.corrector}.

Thus, the strategy is similar to the one used to treat
downward deviations for the range  
developed in \cite{AS19}. However
the form of the corrector is slightly different. Roughly it involves a convolution of Green's function with itself together multiplied by escape 
probability terms, where in \cite{AS} only Green's function appeared.

A detailed analysis of this corrector is 
carried out in Sections \ref{sec-d7} and \ref{sec-d5}. 
Before we can state precisely the result, 
some preliminary notation is required.

For $I\subset \N $, we write $\cR(I):=\{S_k,\ k\in I\}$, 
for the set of visited sites during times $k\in I$. 
Since for any two intervals $I,J\subset \N$, one has $\cR(I\cup J)=\cR(I)\cup \cR(J)$, \reff{decomp-1} gives 
\begin{equation}\label{decomp-3}
\cc{\cR(I\cup J)}= \cc{\cR(I)}+\cc{\cR(J)}-\chi_\C\big(\cR(I),\cR(J)\big).
\end{equation}
Next, given two sets $A$ and $B$, their symmetric difference is defined as $A\Delta B:=(A\cap B^c) \cup (B\cap A^c)$. 
Note in particular that for any $I,J\subset \N$, one has $\cR(I)\Delta \cR(J)\subset \cR(I\Delta J)$. 
Moreover, it follows from \eqref{cap.mon}, \eqref{cap.subadd} and \eqref{cap.card} that for any $A,B\subset \Z^d$, 
$$|\cp(A)-\cp(B)|\le \cp(A\Delta B)\le |A\Delta B|. $$ 
Applying this inequality to ranges on some intervals $I$ and $J$, we get 
\begin{equation}\label{diff.sym.ranges}
|\cp(\cR(I)) - \cp(\cR(J))|\le |I\Delta J |. 
\end{equation}

Now given some integer $T\le n$, we define for $j\ge 0$ and $\ell \ge 1$,  
$$I_{j,\ell}:=[j+(\ell-1)T,j+\ell T], \quad \text{and}\quad \widetilde I_{j,\ell}:=I_{j,1}\cup \dots\cup I_{j,\ell}.$$ 
It follows from \eqref{diff.sym.ranges} that almost surely 
\begin{equation}\label{decomp-6}
|\cp(\cR_n)- \frac{1}{T}\sum_{j=0}^{T-1} 
\cp(\cR(\widetilde I_{j,\lfloor n/T\rfloor} )) |\le T.
\end{equation} 
On the other hand, applying \eqref{decomp-3} recursively we obtain for any $j=0,\dots,T-1$, 
\begin{equation}\label{decomp-7}
\cp(\cR(\widetilde I_{j,\lfloor n/T\rfloor} ) ) = \sum_{\ell=1}^{ \lfloor n/T\rfloor}
\cp (\cR(I_{j,\ell})) - \sum_{\ell=1}^{\lfloor n/T\rfloor -1}
\chi_\C \big(\cR(\widetilde I_{j,\ell}),\cR(I_{j,\ell+1})\big) .
\end{equation}

Define now 
$$\chi_n(T):= \frac{1}{T}\sum_{j=0}^{T-1} \sum_{\ell=1}^{\lfloor n/T\rfloor -1}
\chi_\C \big(\cR(\widetilde I_{j,\ell}),\cR(I_{j,\ell+1})\big), $$
and note that  \eqref{decomp-6} and \eqref{decomp-7} give for any $T\le \zeta/2$,  
\begin{align}\label{dev-decomp}
\nonumber & \bP\left(\cp(\cR_n)-  \mathbb E[\cp(\cR_n)] \le -\zeta \right) \le  \bP\left( \frac{1}{T}\sum_{j=0}^{T-1} \cp(\cR(\widetilde I_{j,\lfloor n/T\rfloor} )) - \mathbb E[\cp(\cR(\widetilde I_{j,\lfloor n/T\rfloor} ))] \le -\frac {\zeta}{2}\right)  \\
&  \le  \bP\left( \frac{1}{T}\sum_{j=0}^{T-1} \sum_{\ell=1}^{ \lfloor n/T\rfloor}
\cp (\cR(I_{j,\ell})) - \mathbb E[\cp (\cR(I_{j,\ell}))] \le -\frac {\zeta}{4}\right)  + \bP\left(\chi_n(T) \ge \frac{\zeta}{4}\right). 
\end{align}
The first term on the right-hand side of \eqref{dev-decomp} is dealt with Bernstein's inequality and Proposition \ref{lem.var.cap}, which show that for any $\zeta> \frac{n\log n}{T}$, 
for some constant $c>0$. 
\begin{align}\label{dev-decomp.2}
\nonumber & \bP\left( \frac{1}{T}\sum_{j=0}^{T-1} \sum_{\ell=1}^{ \lfloor n/T\rfloor}
\cp (\cR(I_{j,\ell}))- \mathbb E[\cp (\cR(I_{j,\ell}))]  \le -\frac {\zeta}{4}\right) \\
& \le T \max_{0\le j\le T-1} 
\bP\left( \sum_{\ell=1}^{ \lfloor n/T\rfloor}
\cp (\cR(I_{j,\ell}))- \mathbb E[\cp (\cR(I_{j,\ell}))]  
\le -\frac {\zeta}{4}\right) \le T \exp(-c\frac{\zeta}{T}).
\end{align}
For the second term in the right-hand side of \eqref{dev-decomp}, we will use a general result of \cite{AS19}, which allows to compare the (moderate) deviations of $\chi_n(T)$ to those of its compensator, defined by 
\begin{equation}\label{def-xin*}
\xi_n^*(T):= \frac 1T \sum_{j=0}^{T-1} \sum_{\ell = 1}^{\lfloor n/T\rfloor-1} \bE\left[\chi_\C \big(\cR(\widetilde I_{j,\ell}),\cR(I_{j,\ell+1})\big)\mid \mathcal F_{j+\ell T}\right].  
\end{equation}
More specifically, Proposition 4.1 in \cite{AS19} (see also the proof of Corollary 4.2 there) shows that for some constant $c>0$, for any $\zeta>0$, 
\begin{equation}\label{dev-chinT}
\bP(\chi_n(T) \ge \frac {\zeta}{4}) \le \exp(-c\frac{\zeta}{T}) + \bP(\xi_n^*(T) \ge c\zeta ) , 
\end{equation}
(where here we use also that by \eqref{cap.card} and \eqref{borne.cap.AUB}, each term of the sum in the definition of $\chi_n(T)$ is bounded by $T$). 
We next define   
\begin{equation}\label{def-xin}
\xi_n(T):= \sum_{k=0}^n \sum_{x\in \cR_k} 
\bP_x\big(H^+_{\cR_k}=\infty\big)\cdot\frac{G\star G_T(x-S_k)}{T}. 
\end{equation} 

\begin{lemma}\label{lem.corrector}
One has, for any $n\ge 1$ and $1\le T\le n$,
$$\xi_n^*(T)\le 2\xi_n(T).$$ 
\end{lemma}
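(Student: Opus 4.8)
The plan is to unfold the definition of $\xi_n^*(T)$ in \eqref{def-xin*}, bound each conditional expectation of a cross-term $\chi_\C$ using the decomposition \eqref{decomp.chiC}--\eqref{decomp-2bis}, and then reorganize the resulting double sum (over the shift $j$ and the block index $\ell$) into the single sum over time steps $k$ appearing in \eqref{def-xin}. First I would recall from \eqref{decomp.chiC} and \eqref{decomp-2bis} that $\chi_\C(A,B) \le \chi(A,B) + \chi(B,A)$, and I would argue that it suffices to control the term corresponding to, say, $\chi(\cR(I_{j,\ell+1}), \cR(\widetilde I_{j,\ell}))$ with the escape probability weight attached to the newest block $\cR(I_{j,\ell+1})$: indeed, dropping $\cR(\widetilde I_{j,\ell})$ from the event $\{H^+_{A\cup B}=\infty\}$ in \eqref{decomp-2} only increases $\chi$, and the symmetric term is handled the same way (this is where the factor $2$ comes from). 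So I would first establish
\[
\chi_\C\big(\cR(\widetilde I_{j,\ell}),\cR(I_{j,\ell+1})\big) \le 2\sum_{x\in \cR(\widetilde I_{j,\ell})} \sum_{y\in \cR(I_{j,\ell+1})} \bP_x\big(H^+_{\cR(\widetilde I_{j,\ell})}=\infty\big)\, G(x-y).
\]

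Next I would take the conditional expectation given $\mathcal F_{j+\ell T}$. Note that $\cR(\widetilde I_{j,\ell})$ is $\mathcal F_{j+\ell T}$-measurable, so the escape-probability factor $\bP_x(H^+_{\cR(\widetilde I_{j,\ell})}=\infty)$ and the indicator $x\in\cR(\widetilde I_{j,\ell})$ come out of the conditional expectation. Writing $\cR(I_{j,\ell+1}) = \{S_{j+\ell T}, S_{j+\ell T+1},\dots, S_{j+(\ell+1)T}\}$ and using the Markov property at time $j+\ell T$, the inner sum $\sum_{y\in\cR(I_{j,\ell+1})} G(x-y)$ has conditional expectation bounded (after a translation so that the walk restarts at $S_{j+\ell T}$) by $\sum_{z} G_T(z)\, G(x - S_{j+\ell T} - z) = G\star G_T(x - S_{j+\ell T})$, since $\sum_{y\in\cR(I_{j,\ell+1})}(\cdot) \le \sum_{m=0}^T (\cdot)(S_{j+\ell T + m})$ pointwise and then one takes expectation term by term. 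This gives, writing $k = j+\ell T$,
\[
\bE\!\left[\chi_\C\big(\cR(\widetilde I_{j,\ell}),\cR(I_{j,\ell+1})\big)\mid \mathcal F_{k}\right] \le 2\sum_{x\in \cR(\widetilde I_{j,\ell})} \bP_x\big(H^+_{\cR(\widetilde I_{j,\ell})}=\infty\big)\, G\star G_T(x - S_k).
\]

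Finally I would sum over $\ell$ and $j$ and divide by $T$. For fixed $j$, as $\ell$ ranges over $1,\dots,\lfloor n/T\rfloor -1$, the index $k=j+\ell T$ ranges over a set of times, and crucially $\cR(\widetilde I_{j,\ell}) \subseteq \cR_k$ (since $\widetilde I_{j,\ell} = [j, j+\ell T] \subseteq [0,k]$), so by monotonicity of the escape probability — a larger set makes escape harder, but here we need $\bP_x(H^+_{\cR(\widetilde I_{j,\ell})}=\infty) \le \bP_x(H^+_{\cR_k}=\infty)$ only after also restricting the outer sum, and more simply $\sum_{x\in\cR(\widetilde I_{j,\ell})}\bP_x(H^+_{\cR(\widetilde I_{j,\ell})}=\infty)(\cdots) \le \sum_{x\in\cR_k}\bP_x(H^+_{\cR_k}=\infty)(\cdots)$ because $\bP_x(H^+_{A}=\infty)\mathbf 1\{x\in A\}$ is dominated by $\bP_x(H^+_{B}=\infty)\mathbf 1\{x\in B\}$ when $A\subseteq B$ (this is exactly \eqref{cap.def2} being monotone term-by-term in the standard way, since dropping points from $A$ can only raise each escape probability but also removes terms — the cleanest route is to note $\cp(A) = \sum_x \bP_x(H_A^+=\infty)\mathbf 1\{x\in A\}$ and use a coupling, or simply invoke the representation directly as is done elsewhere in the paper). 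Then as $j$ runs over $0,\dots,T-1$ and $\ell$ over its range, the pairs $k=j+\ell T$ are all distinct and lie in $\{0,\dots,n\}$, so
\[
\xi_n^*(T) \le \frac{2}{T}\sum_{k=0}^{n} \sum_{x\in\cR_k}\bP_x\big(H^+_{\cR_k}=\infty\big)\, G\star G_T(x-S_k) = 2\xi_n(T),
\]
which is the claim. The main obstacle is the second step: being careful that the Markov property and the term-by-term domination $\sum_{y\in\cR(I_{j,\ell+1})}G(x-y) \le \sum_{m=0}^T G(x - S_{k+m})$ are applied correctly so that the conditional expectation really produces the convolution $G\star G_T$, and that the escape-probability weights, which depend on the (random, growing) set $\cR(\widetilde I_{j,\ell})$, can be legitimately bounded by those associated to $\cR_k$ without losing the clean single-sum structure.
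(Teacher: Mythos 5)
Your first two steps coincide with the paper's proof: the bound $\chi_\C(A,B)\le 2\sum_{x\in A}\sum_{y\in B}\bP_x(H^+_A=\infty)G(x-y)$ follows from \eqref{decomp.chiC} and \eqref{decomp-2} by discarding the escape probability attached to $B$, and the conditional expectation is computed exactly as you describe, via the Markov property and the bound $\bP(y\in\cR(I_{j,\ell+1})\mid \mathcal F_{j+\ell T})\le G_T(y-S_{j+\ell T})$, which is \eqref{inGT}.

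The problem is in your last step. The claim that $\bP_x(H^+_A=\infty)\1\{x\in A\}$ is dominated pointwise by $\bP_x(H^+_B=\infty)\1\{x\in B\}$ when $A\subseteq B$ is false: for $x\in A$ one has $\bP_x(H^+_A=\infty)\ge\bP_x(H^+_B=\infty)$, since escaping a larger set is harder, so the domination fails precisely where both sides are nonzero. Consequently, for a general weight $w\ge 0$ the inequality $\sum_{x\in A}\bP_x(H^+_A=\infty)w(x)\le\sum_{x\in B}\bP_x(H^+_B=\infty)w(x)$ is simply wrong (take $w$ supported on a single point of $A$), and neither the monotonicity of $\cp$ (which controls only the total mass of the equilibrium measure) nor an unspecified coupling repairs it. What saves the argument is that the weight here is not arbitrary: $G\star G_T(x-S_k)=\sum_z G_T(z-S_k)G(x-z)$ is a nonnegative superposition of Green's functions, and the last-exit decomposition gives, for every fixed $z$ and finite $A$,
$$\sum_{x\in A}G(x-z)\,\bP_x(H^+_A=\infty)=\bP_z(H_A<\infty),$$
which \emph{is} monotone in $A$. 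Summing over $z$ against $G_T(z-S_k)\ge 0$ yields
$$\sum_{x\in\cR(\widetilde I_{j,\ell})}\bP_x\big(H^+_{\cR(\widetilde I_{j,\ell})}=\infty\big)\,G\star G_T(x-S_k)\ \le\ \sum_{x\in\cR_k}\bP_x\big(H^+_{\cR_k}=\infty\big)\,G\star G_T(x-S_k),$$
with $k=j+\ell T$, which is the term-by-term comparison you need. (The paper's own proof is terse at exactly this point --- it says the lemma ``follows from the definitions'' --- but it does not assert the false pointwise bound.) With this correction your argument is complete and otherwise identical to the paper's.
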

\begin{proof} By \eqref{decomp-2}, for any sets $A$ and $B$, 
$$\chi(A,B)\le \widetilde \chi(A,B):= \sum_{x\in A}\sum_{y\in B} 
\bP_x\big(H^+_A=\infty \big)\cdot G(x-y)\cdot
\bP_y\big(H^+_{B}=\infty \big). $$
Note that $\widetilde \chi$ is symmetric 
in the sense that $\widetilde \chi(A,B) = \widetilde \chi(B,A)$, for any $A,B$. 
Bounding the last probability term by one, we get 
$$\chi_\C(A,B)\stackrel{\eqref{decomp.chiC}}{\le} \chi(A,B) + \chi(B,A) \le 2 \overline \chi(A,B),
\quad\text{with }\quad
\overline \chi(A,B):=\sum_{x\in A} \sum_{y\in B} \bP_x\big(H^+_A=\infty \big)\cdot G(x-y).$$
Now for any $j,\ell$, the Markov property and translation invariance of the simple random walk give
\begin{align*}
\mathbb E\left[\overline \chi \big(\cR(\widetilde I_{j,\ell}),\cR(I_{j,\ell+1})\big)
\mid \mathcal F_{j+\ell T}\right]
& =\   \sum_{x\in \cR(\widetilde I_{j,\ell}) }    \mathbb P_x(H_{\cR(\widetilde I_{j,\ell})}^+=\infty) \sum_{y\in \Z^d} G(x-y) \cdot 
\mathbb P\big( y \in \cR (I_{j,\ell+1}) \mid  \mathcal F_{j+\ell T}\big) \\
& \stackrel{\eqref{inGT}}{\le} \,  \sum_{x\in \cR(\widetilde I_{j,\ell})} \mathbb P_x(H_{\cR(\widetilde I_{j,\ell})}^+=\infty) \cdot G\star G_T(x-S_{j+\ell T}), 
 \end{align*} 
and the lemma follows from the definition \eqref{def-xin} and \eqref{def-xin*} of $\xi_n(T)$ and $\xi_n^*(T)$ respectively.
\end{proof}

Combining \eqref{dev-decomp}, \eqref{dev-decomp.2}, \eqref{dev-chinT}, and Lemma \ref{lem.corrector} we obtain the main result of this section. 
\begin{proposition}\label{prop.corrector}
There exists a positive constant $c$, such that for any $n\ge 2$, $\zeta>0$, and $T\ge 1$ satisfying $T\le \zeta/2$, and $\zeta \ge \frac{n\log n}{T}$,  
$$\bP\left(\cp(\cR_n)- \mathbb E[\cp(\cR_n)] \le -\zeta \right) \le 2T\exp(-c\frac{\zeta}{T}) + \bP\left(\xi_n(T) \ge c\zeta\right).$$
\end{proposition}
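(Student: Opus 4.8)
The statement combines several ingredients that have all been established in the excerpt, so the proof is essentially a bookkeeping exercise assembling them in the right order. The plan is to start from the left-hand side $\bP(\cp(\cR_n)-\mathbb E[\cp(\cR_n)] \le -\zeta)$ and work downward through the chain \eqref{dev-decomp} $\to$ \eqref{dev-decomp.2} $\to$ \eqref{dev-chinT} $\to$ Lemma~\ref{lem.corrector}.

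First I would invoke \eqref{dev-decomp}, which is valid precisely under the hypothesis $T\le \zeta/2$ (needed so that the error term $T$ in \eqref{decomp-6} can be absorbed into $\zeta/2$), to bound the target probability by the sum of two terms: the deviation probability for the sum of i.i.d.\ capacities $\frac1T\sum_{j}\sum_\ell \cp(\cR(I_{j,\ell}))$ below $-\zeta/4$, and $\bP(\chi_n(T)\ge \zeta/4)$. Second, the i.i.d.\ term is controlled by \eqref{dev-decomp.2}: a union bound over $0\le j\le T-1$ followed by Bernstein's inequality applied to the sum of $\lfloor n/T\rfloor$ independent bounded summands, using the variance bound $\var(\cp(\cR_m))=\mathcal O(m\log m)$ from Proposition~\ref{lem.var.cap}; this is where the second hypothesis $\zeta \ge n\log n/T$ enters, ensuring the deviation $\zeta/4$ dominates the standard deviation so Bernstein's bound genuinely gives the exponential $T\exp(-c\zeta/T)$ rather than being vacuous. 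Third, for $\bP(\chi_n(T)\ge\zeta/4)$ I would apply the general comparison result quoted as \eqref{dev-chinT} (Proposition~4.1 and Corollary~4.2 of \cite{AS19}), which transfers the deviation of $\chi_n(T)$ to that of its compensator $\xi_n^*(T)$ at the cost of an extra $\exp(-c\zeta/T)$ term; here one uses that each summand $\chi_\C(\cR(\widetilde I_{j,\ell}),\cR(I_{j,\ell+1}))$ is bounded by $T$ via \eqref{borne.cap.AUB} and \eqref{cap.card}. Finally, Lemma~\ref{lem.corrector} gives $\xi_n^*(T)\le 2\xi_n(T)$, so $\bP(\xi_n^*(T)\ge c\zeta) \le \bP(\xi_n(T)\ge (c/2)\zeta)$, and after collecting the (at most three) terms of the form $\exp(-c\zeta/T)$ or $T\exp(-c\zeta/T)$ into a single $2T\exp(-c\zeta/T)$ with a possibly smaller constant $c$, and renaming the constant in the $\xi_n(T)$ term, one arrives at the claimed inequality.

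There is no real obstacle here: every nontrivial estimate has already been proved or cited. The only mild care required is constant-chasing — making sure the various constants $c$ appearing at different steps (Bernstein, the \cite{AS19} comparison, the factor $2$ from Lemma~\ref{lem.corrector}) can be uniformly replaced by a single $c$, and that the thresholds $\zeta/4$, $c\zeta$, $(c/2)\zeta$ are consistently absorbed; this is why the final statement carries just one unspecified positive constant. One should also double-check that the two hypotheses on $(\zeta,T)$ in the Proposition are exactly the union of the hypotheses needed in \eqref{dev-decomp} ($T\le\zeta/2$) and in \eqref{dev-decomp.2} ($\zeta\ge n\log n/T$), which they are.
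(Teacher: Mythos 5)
Your proposal is correct and follows exactly the paper's own argument: the paper proves Proposition~\ref{prop.corrector} by combining \eqref{dev-decomp}, \eqref{dev-decomp.2}, \eqref{dev-chinT}, and Lemma~\ref{lem.corrector} in precisely the order you describe, with the hypotheses $T\le\zeta/2$ and $\zeta\ge n\log n/T$ entering where you say they do. Nothing to add.
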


\begin{remark}\label{rem.5}
\emph{In dimension $5$, the mean of $\xi_n(T)$ is of order $n/\sqrt T$. So the upper deviations for $\xi_n(T)$ start to decay only for 
$\zeta>n/\sqrt T$, and since on the other hand one needs to take $T$ at most of order $(\zeta n)^{1/3}$, to ensure the term $\exp(-c\zeta/T)$ to have the 
right decay, this imposes the condition $\zeta>n^{5/7}$. In particular the approach we have here has no chance to work up to the Gaussian regime. 
On the other hand in dimension $7$ and higher, the mean of $\xi_n(T)$ is of order $n/T$, and $T$ can be chosen of order $\zeta^{2/(d-2)}$, 
which only imposes the a priori condition $\zeta>n^{(d-2)/d}$, leaving a chance to cover entirely the non-Gaussian regime.}
\end{remark}

\section{Upper Bounds}\label{sec-UB}
We prove here the upper bounds in \eqref{dev.nongauss.7} and in Theorem \ref{thm:d5}, as well as Theorem \ref{thm:scen-5}. 
We start by some preliminaries, which shall be used as well in Section \ref{sec.Gaussian}, concerning the Gaussian regime. 

\subsection{Basic estimates}\label{sec-ineq}
For $r>0$, and $x\in \R^d$, we recall that 
$ Q(x,r):=[x-r/2,x+r/2)^d\cap \Z^d,$
and for simplicity $Q(r):=Q(0,r)$.

\begin{lemma}\label{lem-HD}
Assume that $d\ge 5$. 
There exists a constant $C_1>0$, such that for any $r\ge 1$, and any   
$\Lambda\subset Q(r)$, 
\begin{equation}\label{lem-HD.1}
\sum_{x\in \Lambda} \frac{1}{\|x\|^{d-4}+1}
\cdot \bP_x(H^+_\Lambda=\infty)\, \le\, C_1\, r^2. 
\end{equation}
\end{lemma}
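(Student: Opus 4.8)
The plan is to reduce the left-hand side of \eqref{lem-HD.1} to a quantity that can be estimated using the two facts we already know about the escape probabilities $\bP_x(H^+_\Lambda=\infty)$: first, they sum (over $x\in\Lambda$) to $\cp(\Lambda)$, which by \eqref{cap.card} and monotonicity \eqref{cap.mon} is at most $\cp(Q(r))\lesssim r^{d-2}$; and second, they are individually bounded by $1$. The kernel $1/(\|x\|^{d-4}+1)$ is, up to a constant, the Green's function of a $(d-2)$-dimensional walk, i.e. it is the double convolution $G\star G$ in dimension $d$ (this is why it shows up: $G\star G(z)\asymp 1/(\|z\|^{d-4}+1)$ for $d\ge 5$). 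The obstruction to a naive bound is that $1/\|x\|^{d-4}$ is large near $x=0$, so we cannot simply pull it out; we must exploit that $\Lambda$ is confined to a box of side $r$ and that the escape probabilities cannot all be large near a single point.

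The concrete approach I would take is a dyadic layering of the box by distance to the origin. Write $\Lambda=\bigcup_{k\ge 0}\Lambda_k$ where $\Lambda_k:=\{x\in\Lambda:\ 2^k\le \|x\|+1<2^{k+1}\}$ (only $k\lesssim \log r$ are nonempty since $\Lambda\subset Q(r)$). On $\Lambda_k$ the kernel is of order $2^{-k(d-4)}$, so
\begin{equation*}
\sum_{x\in\Lambda}\frac{\bP_x(H^+_\Lambda=\infty)}{\|x\|^{d-4}+1}\ \asymp\ \sum_{k\ge 0}2^{-k(d-4)}\sum_{x\in\Lambda_k}\bP_x(H^+_\Lambda=\infty).
\end{equation*}
For the inner sum I use two competing bounds: the trivial one $\sum_{x\in\Lambda_k}\bP_x(\cdot)\le |\Lambda_k|\le C\,2^{kd}$, and the capacity bound $\sum_{x\in\Lambda_k}\bP_x(H^+_\Lambda=\infty)\le \sum_{x\in\Lambda_k}\bP_x(H^+_{\Lambda_k}=\infty)=\cp(\Lambda_k)\le \cp(Q(0,2^{k+1}))\le C\,2^{k(d-2)}$ (using \eqref{cap.mon} and the standard estimate $\cp(Q(\rho))\asymp \rho^{d-2}$). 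The second bound wins for every $k$: it gives $2^{-k(d-4)}\cdot 2^{k(d-2)}=2^{2k}$, and summing the geometric-type series $\sum_{k: 2^k\le Cr}2^{2k}\asymp r^2$ yields exactly the claimed $C_1 r^2$. (One must double-check the monotonicity step: $\Lambda_k$ lies in an annulus, but it is contained in the box $Q(0,2^{k+1})$, so $\cp(\Lambda_k)\le\cp(Q(0,2^{k+1}))$ is legitimate.)

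I expect the main obstacle, or at least the only place requiring care, to be making the replacement $\bP_x(H^+_\Lambda=\infty)\le\bP_x(H^+_{\Lambda_k}=\infty)$ rigorous and checking the standard capacity-of-a-box estimate $\cp(Q(\rho))\le C\rho^{d-2}$ in the discrete setting — but both are classical (the first is immediate from $\Lambda_k\subset\Lambda$ together with \eqref{cap.def2}, the second follows from \eqref{cap.card} applied on a coarser scale, or from \eqref{cap.def} and \eqref{hit}). An alternative, slightly slicker route avoiding the dyadic decomposition: observe that the quantity $\sum_{x\in\Lambda}\bP_x(H^+_\Lambda=\infty)\,G\star G(x-y)$, summed also over $y$, is essentially $\widetilde\chi(\Lambda,\Lambda)$, and one can bound the function $x\mapsto \sum_{x'\in\Lambda}\bP_{x'}(H^+_\Lambda=\infty)G(x-x')$ pointwise by a constant (it is a bounded harmonic-type potential for a set of capacity $\le |\Lambda|$), then use $\sum_{x\in\Lambda}G(x-y)\lesssim r^2$ uniformly over $y\in Q(r)$. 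Either way the heart of the matter is the same trade-off between the $\ell^1$-mass $\cp(\Lambda)\le r^{d-2}$ of the escape probabilities and the $r^2$-sized "reach" of the kernel $G\star G$ over a box of side $r$; I would present the dyadic version since it most transparently exposes why $r^2$ (and not, say, $r^{d-2}$ times the worst kernel value) is the right answer.
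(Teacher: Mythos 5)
Your proof is correct and is essentially the same argument as the paper's: a dyadic decomposition of $\Lambda$ into annular shells, where on each shell the kernel is roughly constant, the escape probabilities sum to at most the capacity of the enclosing box ($\lesssim\rho^{d-2}$), and the resulting geometric series in $\rho^2$ is dominated by its outermost term $\asymp r^2$. The only cosmetic difference is that you index the shells outward from the origin (via $\|x\|\asymp 2^k$) while the paper indexes inward from $r$ (via $\|x\|\asymp r2^{-i}$); your use of $\|x\|+1$ also handles the $x=0$ term slightly more cleanly, but this is a negligible distinction.
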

\begin{proof}
Without loss of generality, one can assume $r\ge 2$. For $i\ge 0$, write 
$$\Lambda_i := \Lambda\cap \left( Q( r2^{-i})\bs Q(r2^{-i-1})\right),$$
and define  
$L:=\lfloor \log_2(r) \rfloor$. Then, for some positive constants $C_0$ and $C_1$, 
\begin{equation*}
\begin{split}
\sum_{x\in \Lambda} \frac{1}{\|x\|^{d-4}+1}
\cdot \bP_x(H^+_\Lambda=\infty) \ &
\le \ \sum_{i=0}^L \sum_{x\in \Lambda_i}
 \frac{1}{\|x\|^{d-4}+1}\cdot \bP_x(H^+_\Lambda=\infty)\\
& \le \ \sum_{i=0}^L \big(\frac{2^{i+1}}{r}\big)^{d-4}\cc{\Lambda_i }
\le\  \sum_{i=0}^L \big(\frac{2^{i+1}}{r}\big)^{d-4}\cc{Q(\frac{r}{2^i})}\\
&\le C_0\, \sum_{i=0}^L \big(\frac{2^{i+1}}{r}\big)^{d-4}\cdot 
\big(\frac{r}{2^{i}}\big)^{d-2} \le\ C_1 \, r^2.
\end{split}
\end{equation*}
\end{proof}

The second result we need is the following.

\begin{lemma}\label{lem-asympt}
Assume $d\ge 5$. There exists a constant $C_2>0$, such that for any $x\in \Z^d$, and any $T\ge 1$,
\begin{equation*}
\varphi_T(x):=\frac{G\star G_T(x)}{T}\ \le\ C_2\cdot \min\left(\frac{1}{1+\|x\|^{d-2}},
\frac{1}{T(1+\|x\|^{d-4})}\right).
\end{equation*}
\end{lemma}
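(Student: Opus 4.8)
\emph{Proposal.} The plan is to rewrite the convolution $G\star G_T$ in closed form as a weighted sum of heat-kernel terms, and then read off the two halves of the bound directly. Write $p_k(z):=\bP(S_k=z)$, so that $G=\sum_{k\ge 0}p_k$ and $G_T=\sum_{k=0}^{T}p_k$ as functions on $\Z^d$. Using the Chapman--Kolmogorov identity $p_m\star p_n=p_{m+n}$, one obtains for every $x\in\Z^d$
$$G\star G_T(x)=\sum_{n=0}^{T}\sum_{m\ge 0}p_{m+n}(x)=\sum_{k\ge 0}\#\{0\le n\le T:\ n\le k\}\cdot p_k(x)=\sum_{k\ge 0}\min(k+1,T+1)\,p_k(x),$$
and, expanding the same way, $G\star G(x)=\sum_{m,n\ge 0}p_{m+n}(x)=\sum_{k\ge 0}(k+1)\,p_k(x)$.

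From the first display, bounding $\min(k+1,T+1)\le T+1\le 2T$ gives $G\star G_T(x)\le 2T\,G(x)$, hence $\varphi_T(x)\le 2G(x)\le C(1+\|x\|^{d-2})^{-1}$ by \eqref{Green}; this is the first term in the claimed minimum. Bounding instead $\min(k+1,T+1)\le k+1$ gives $G\star G_T(x)\le G\star G(x)$, so it remains to invoke the classical estimate $G\star G(x)\le C(1+\|x\|^{d-4})^{-1}$, valid for $d\ge 5$: it follows from \eqref{Green} together with the standard bound $\sum_y(1+\|y\|)^{2-d}(1+\|x-y\|)^{2-d}\le C(1+\|x\|)^{4-d}$ for convolutions of Riesz-type kernels, which holds precisely because $2(d-2)>d>d-2$. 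Dividing by $T$ yields $\varphi_T(x)\le C\,(T(1+\|x\|^{d-4}))^{-1}$, and taking the minimum of the two bounds proves the lemma.

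I do not expect a real obstacle: the only points needing mild care are the combinatorial identity in the first display and the (harmless) discrepancy between $1+\|x\|^{a}$ and $(1+\|x\|)^{a}$. If one prefers to keep the argument self-contained and avoid quoting the $G\star G$ estimate, one can bound $\sum_{k\ge 1}k\,p_k(x)=G\star G(x)-G(x)$ directly from the local central limit theorem bound $p_k(z)\le C k^{-d/2}\exp(-c\|z\|^2/k)$, splitting the sum at $k\asymp\|x\|^2$ and using that $k\mapsto k^{1-d/2}e^{-c\|x\|^2/k}$ is maximized around $k\asymp\|x\|^2$; here the hypothesis $d\ge 5$ enters exactly through the convergence of $\sum_k k^{1-d/2}$.
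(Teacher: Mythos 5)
Your proof is correct, and for the second bound $C/(T(1+\|x\|^{d-4}))$ you and the paper argue identically: $G_T\le G$ gives $G\star G_T\le G\star G$, and one invokes the elementary convolution estimate $G\star G(x)\lesssim 1/(1+\|x\|^{d-4})$. Where you genuinely diverge is the first bound $C/(1+\|x\|^{d-2})$. The paper writes $G\star G_T(x)=\sum_{k\le T}\bE[G(x-S_k)]$ and bounds each summand by $\bE[G(x-S_k)]\le (1+CG(0))\,G(x)$ via harmonicity of $G$ off the origin, optional stopping at the first visit to a neighbourhood of $x$, and the hitting estimate \eqref{hit}. Your route instead rewrites $G\star G_T(x)=\sum_{k\ge 0}\min(k+1,T+1)\,p_k(x)$ using Chapman--Kolmogorov, and then simply drops to $\min(k+1,T+1)\le T+1$; this needs no stochastic machinery at all, and in fact your identity gives $\bE[G(x-S_k)]=\sum_{j\ge k}p_j(x)\le G(x)$, which shows that the paper's constant $1+CG(0)$ can in fact be taken equal to $1$. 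Thus your argument is more elementary and slightly sharper. The one thing to keep in mind is that your approach leans on the exact semigroup telescoping $p_m\star p_n=p_{m+n}$, while the paper's optional-stopping argument would transfer more readily to a setting where $G$ is replaced by a general superharmonic kernel without that structure; in the present context either works.
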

\begin{proof}
First $G_T\le G$, so that $G\star G_T\le G\star G$, and
an elementary computation gives that $G\star G(x)\le C_2/ (1+\|x\|^{d-4})$, for all $x\in \Z^d$, and some $C_2>0$. This already proves one of the two desired bounds.

For the other one write, by definition of $G_T$, 
\begin{equation}\label{GstarGS}
G\star G_T(x) = \sum_{y\in \Z^d} G(x-y)G_T(y) = \sum_{k=1}^T \mathbb E[G(x-S_k)].
\end{equation}
Let $\tau$ be the hitting time of the cube $Q(x,2)$ for the walk starting at 0, 
and note that one can assume $\|x\|\ge 4$.  
Since $G$ is harmonic on $\Z^d\bs \{0\}$, we have for any $k\ge 0$, 
$\bE [G(x-S_{k\wedge \tau})]=G(x)$. This entails
\[
G(x)=\bE [\1\{\tau\ge k\}G(x-S_{k})]+
\bE [\1\{\tau<k\}G(x-S_{\tau})]\ge 
\bE [G(x-S_k)]-\bE[\1\{\tau<k\} G(x-S_k)]. 
\]
Now, we use that $G(x)$ is bounded by $G(0)$, so that the previous inequality gives  
\[
 \bE [G(x-S_k)] \le G(x)+G(0) \bP(\tau<\infty) \stackrel{\eqref{hit}}{\le} (1+CG(0))\cdot G(x),
\]
for some constant $C>0$. 
Injecting this in \eqref{GstarGS} and using \eqref{Green}, proves the second inequality. 
\end{proof}

Our last estimate requires some new notation. 
For a (deterministic) function $S:\N \to \Z^d$ (not necessarily to the nearest neighbor), 
and for any  $\K\subset \N$, we define for any $\Lambda\subset \Z^d$, 
$$\ell_\K(\Lambda):=\sum_{k\in \K} {\bf 1}\{S(k) \in \Lambda\}.$$ 

\begin{lemma}\label{lem:rear}
Assume $d\ge 3$. Let $S:\N \to \Z^d$, and $\K\subset \N$, be such that for some $\rho\in (0,1)$ and $r\ge 1$, 
$$\ell_\K(Q(x,r)) \le \rho r^d, \qquad \text{for all }x\in r\Z^d.$$
There exists a constant $C_3>0$ (independent of $\rho$, $r$, $S$, and $\K$), such that for any $z\in \Z^d$,
\begin{equation}\label{lem:rear.2}
\sum_{k\in \K} \frac{\1(\|S(k)-z\|\ge 2r)}{\|S(k)-z\|^{d-2}} \ \le\ C_3 \, \rho^{1-\frac 2d} \,  |\K|^{2/d}.
\end{equation}
\end{lemma}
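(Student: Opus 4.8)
\textbf{Proof plan for Lemma \ref{lem:rear}.}

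The plan is to dyadically decompose the sum over $k\in\K$ according to the distance $\|S(k)-z\|$, bound the number of indices $k$ falling into each dyadic shell using the local time constraint, and then sum the resulting geometric-type series, optimizing the cutoff between two competing bounds.

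First I would fix $z\in\Z^d$ and for each integer $i\ge 1$ set
$$
\K_i := \{k\in\K : 2^i r \le \|S(k)-z\| < 2^{i+1} r\},
$$
so that $\sum_{k\in\K}\frac{\1(\|S(k)-z\|\ge 2r)}{\|S(k)-z\|^{d-2}} \le \sum_{i\ge 1} |\K_i| \cdot (2^i r)^{-(d-2)}$. The key point is to control $|\K_i|$. On one hand, the ball $\{y:\|y-z\|<2^{i+1}r\}$ can be covered by at most $C\,2^{id}$ cubes of the form $Q(x,r)$ with $x\in r\Z^d$ (the constant depending only on $d$), and by hypothesis each such cube contains at most $\rho r^d$ indices $k$ with $S(k)$ in it; hence $|\K_i| \le C\,2^{id}\rho r^d$. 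On the other hand, trivially $|\K_i| \le |\K|$. I would therefore use the first bound for small $i$ and the second for large $i$, with the crossover at the index $i^*$ where $2^{i^*d}\rho r^d \approx |\K|$, i.e. $2^{i^*} \approx (|\K|/(\rho r^d))^{1/d}$.

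Carrying this out: for $i\le i^*$, $\sum_{i\le i^*} C\,2^{id}\rho r^d (2^i r)^{-(d-2)} = C\rho r^2 \sum_{i\le i^*} 2^{2i} \le C'\rho r^2 \, 2^{2i^*}$, and for $i> i^*$, $\sum_{i> i^*} |\K| (2^i r)^{-(d-2)} \le C'' |\K| r^{-(d-2)} 2^{-(d-2)i^*}$ (using $d\ge 3$ so the geometric series converges). Substituting $2^{i^*} = (|\K|/(\rho r^d))^{1/d}$, the first sum becomes $C'\rho r^2 (|\K|/(\rho r^d))^{2/d} = C' \rho^{1-2/d} |\K|^{2/d}$, and the second becomes $C'' |\K| r^{-(d-2)} (|\K|/(\rho r^d))^{-(d-2)/d} = C'' |\K| r^{-(d-2)} (\rho r^d)^{(d-2)/d} |\K|^{-(d-2)/d} = C'' \rho^{1-2/d} |\K|^{2/d}$; both match the claimed bound, so adding them gives \eqref{lem:rear.2} with $C_3$ depending only on $d$. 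One should also note that if $\rho r^d \ge |\K|$ then $i^*\le 1$ and only the trivial bound is needed, while if $(|\K|/(\rho r^d))^{1/d} < 2$ the first regime is empty — these edge cases are harmless since the final bound still holds (indeed $|\K|^{2/d}\rho^{1-2/d}$ dominates).

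The main obstacle — really the only subtle point — is getting the covering count right: one must check that a Euclidean ball of radius $2^{i+1}r$ meets at most $O(2^{id})$ of the grid cubes $Q(x,r)$, $x\in r\Z^d$, which is clear since these cubes tile $\Z^d$ and the ball has volume $O((2^ir)^d)$, so it intersects $O(2^{id})$ of them (with a dimensional constant absorbing boundary effects). Everything else is a routine summation of geometric series, and the requirement $d\ge 3$ is used exactly to ensure $\sum_{i>i^*} 2^{-(d-2)i}$ converges.
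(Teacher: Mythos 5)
Your proposal is correct, and it rests on the same two ingredients as the paper's proof: covering by the grid cubes $Q(x,r)$, $x\in r\Z^d$, to get the density bound $O(2^{id}\rho r^d)$ on each dyadic shell (the paper packages this as the partial-sum estimate $\sum_{k}\1(2r\le\|S(k)-z\|\le R)\|S(k)-z\|^{2-d}\le C\rho R^2$), and the observation that the effective radius is $(|\K|/\rho)^{1/d}$. The only difference is in handling the far field: where the paper invokes a rearrangement argument (all points may be assumed to lie in $Q(z,2(|\K|/\rho)^{1/d})$), you bound the tail explicitly via $|\K_i|\le|\K|$ and the convergent geometric series $\sum_{i>i^*}2^{-(d-2)i}$ — an equally valid and arguably more self-contained route to the same crossover computation.
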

\begin{proof}
We start by proving that for any $R\ge 2r$, and any $z\in \Z^d$, 
\begin{equation}\label{lem:rear.1}
\sum_{k\in \K} \frac{\1(2r \le \|S(k)-z\|\le R)}{\|S(k)-z\|^{d-2}} \ \le\ C_3 \, \rho \, R^2. 
\end{equation}
Consider a covering of the cube $Q(z,R)$ by a partition made of smaller cubes which are translates of  $Q(r)$, with centers in the set $z+r\Z^d$.  
For each $x\in z+r\Z^d$, with $x\neq z$, the contribution of the points $S(k)$ lying in $Q(x,r)$ to the sum we need to bound, is upper bounded (up to some constant) by $\rho r^d \cdot \|x-z\|^{2-d}$, and \eqref{lem:rear.1} follows as we observe that, for some constant $C>0$, 
$$\sum_{x\in z+r\Z^d} \frac{\1\{r\le \|z-x\|\le R\}}{\|z-x\|^{d-2}}  \le C \frac{R^2}{r^d}.$$

We then deduce \eqref{lem:rear.2}, by observing that by rearranging the points $(S(k))_{k\in \K}$, one can only increase the sum (at least up to a multiplicative constant) by assuming they are all in $Q(z,2(\frac{|\K|}{\rho})^{1/d})$, and still satisfy the hypothesis of the lemma. 
\end{proof}

\subsection{The sets $\mathcal K_n$}\label{sec.KnAn}
We  recall here our main tools from \cite{AS19}, which require some new notation. For $n\ge 0$, and $\Lambda\subseteq \Z^d$, define 
the time spent in $\Lambda$ by the walk up to time $n$ as 
$$
\ell_n(\Lambda):= \sum_{k=0}^n \1\{S_k\in \Lambda\}.
$$
Then given $\rho>0 $, $r\ge 1$, and $n\ge 1$, set 
\begin{equation}\label{def-K}
\K_n(r,\rho):=\{k\le n:\ \ell_n(Q(S_k,r))\ge \rho r^d\}.
\end{equation}
The following result is Theorem 1.5 and Proposition 3.1 from \cite{AS19}. 
\begin{theorem}[\cite{AS19}] 
\label{lem-AS}
There exist positive constants $C_0$ and $\kappa$, 
such that for any $\rho>0$, $r\ge 1$, and $n\ge 1$, satisfying  
\begin{equation}\label{hyp.r}
\rho\, r^{d-2} \ge C_0\log n,
\end{equation} 
one has for any $L\ge 1$, 
\begin{equation*}
\mathbb P\big(|\K_n(r,\rho)|\ge L\big)\, 
\le \, C_0 \exp\left(-\kappa\, \rho^{\frac 2d}\, L^{1-\frac 2d} \right).
\end{equation*}
Furthermore, for any $A>0$, there exists $\alpha>0$, such that 
\begin{equation*}
\mathbb P\big(|\K_n(r,\rho)|\ge L, \, \ell_n(\V_n(r,2^{-d}\rho)) \le \alpha L\big)\, 
\le \, C_0 \exp\left(-A \rho^{\frac 2d}\, L^{1-\frac 2d} \right).
\end{equation*}
\end{theorem}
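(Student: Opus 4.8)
The plan is to prove the first (unconditional) bound and then read off the second one as a strengthening of the same argument. Fix $r,\rho,n$ with $\rho r^{d-2}\ge C_0\log n$, write $\rho':=2^{-d}\rho$, let $\C:=\C_n(r,\rho')$ be the set of $\rho'$-heavy grid boxes (those $x\in r\Z^d$ with $\ell_n(Q(x,r))\ge\rho'r^d$), set $N:=|\C|$ and $\widehat\V:=\V_n(r,\rho')=\bigcup_{x\in\C}Q(x,r)$. The first move is a purely geometric reduction to occupation times. If $k\in\K_n(r,\rho)$ then $Q(S_k,r)$ is covered by at most $2^d$ grid boxes whose occupation times sum to at least $\rho r^d$; hence one of them, $Q(x,r)$ with $x\in\C$, meets $Q(S_k,r)$, so $S_k\in Q(x,2r)$. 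Therefore $\K_n(r,\rho)\subseteq\{k\le n:S_k\in W\}$ for the self-generated set $W:=\bigcup_{x\in\C}Q(x,2r)$, of volume $V:=|W|\le 2^dNr^d$, and consequently
$$|\K_n(r,\rho)|\ \le\ \ell_n(W),\qquad\text{while}\qquad \ell_n(W)\ \ge\ \ell_n(\widehat\V)\ =\ \sum_{x\in\C}\ell_n(Q(x,r))\ \ge\ N\rho'r^d.$$
Since $\ell_n(\widehat\V)\le n+1$ this also forces $N=O(n/(\rho r^d))$ deterministically, and on the event $\{\ell_n(\widehat\V)\le\alpha L\}$ one gets the much better bound $N\le 2^d\alpha L/(\rho r^d)$, hence $V\le C\alpha L/\rho$.

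Next I would bring in two probabilistic inputs. (i) For a single box of side $r$, decomposing its visits into macroscopic excursions that leave the concentric box of side $2r$ — each such excursion leaving $O(r^2)$ occupation time with an exponential tail on scale $r^2$ and being followed by a further return with probability bounded away from $1$, using \eqref{Green} and \eqref{hit} — gives $\bP(\ell_n(Q(x,r))\ge\rho'r^d)\le C\exp(-c\rho r^{d-2})\le n^{-10}$ for $C_0$ large. (ii) For a \emph{fixed} set $\Lambda$ with $|\Lambda|\le V$, the analogous excursion decomposition at scale $V^{1/d}$ (the part of $\Lambda$ visited during an excursion lies in a ball of that radius, of capacity $O(V^{1-2/d})$) gives $\bP_z(\ell_\infty(\Lambda)\ge t)\le C\exp(-ct/V^{2/d})$ for every $z$ and every $t\ge CV^{2/d}$. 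Then I would union bound over the possible shapes of $W$: these are unions of $N$ doubled heavy boxes that, through connectedness of the range, can be encoded in at most $O(n)\cdot C^{N}$ ways; writing $A=\{\text{the chosen }N\text{ boxes are heavy}\}$, $B=\{\ell_n(W)\ge L\}$ and using $\bP(A\cap B)^2\le\bP(A)\bP(B)$ together with (i) (in the form of a joint heaviness estimate of the type in \cite{AS}) and (ii), each configuration contributes at most $\exp(-\tfrac c2N\rho r^{d-2})\cdot\exp(-\tfrac c2\,L/V^{2/d})$. Summed against the entropy $O(n)C^{N}$ over the $O(n/(\rho r^d))$ admissible values of $N$, the first factor keeps everything under control (this is exactly where $\rho r^{d-2}\ge C_0\log n$ is spent), and the second factor is what produces the stretched-exponential rate.

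The final step is the optimisation. Summing the above over dyadic scales of $V$ (equivalently of $N\ge1$), one must minimise an exponent of the form $N\rho r^{d-2}+L/V^{2/d}=V\rho/r^2+L/V^{2/d}$ up to constants. For $L\ge\rho r^d$ the minimiser is interior and the exponent is at least a constant times $\rho^{2/d}L^{1-2/d}$ (in fact $\gtrsim L^{d/(d+2)}$, which is stronger); for $L<\rho r^d$ the constraint $V\ge r^d$ binds and the single-box term $\rho r^{d-2}\ge\rho^{2/d}L^{1-2/d}$ already suffices. Either way $\bP(|\K_n(r,\rho)|\ge L)\le C_0\exp(-\kappa\rho^{2/d}L^{1-2/d})$, polynomial prefactors being absorbed via the hypothesis. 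For the conditional estimate one just feeds in $V\le C\alpha L/\rho$ from the first paragraph: then $L/V^{2/d}\ge c\,\alpha^{-2/d}\rho^{2/d}L^{1-2/d}$, so choosing $\alpha$ small enough that $c\,\alpha^{-2/d}/2\ge A$ yields the claim with $\alpha=\alpha(A)$.

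The step I expect to be the main obstacle is input (ii) combined with the entropy count of the second paragraph: controlling, uniformly over the \emph{random} geometry of $W$, the cost of the walk spending a large occupation time in a self-generated union of heavy boxes. Clustered and spread-out configurations of heavy boxes have genuinely different joint costs and entropies, and the bound must be robust to both; this is precisely the point at which one has to adapt, rather than merely quote, the corresponding occupation-time lemma from the volume-of-range analysis in \cite{AS}, and getting the constants to line up so that the heaviness cost beats the configurational entropy for \emph{all} admissible $r$ is the delicate part.
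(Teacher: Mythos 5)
Before anything else: the present paper does \emph{not} prove this statement. It is quoted verbatim from \cite{AS19} (Theorem 1.5 and Proposition 3.1 there), as the sentence immediately preceding it says, so there is no in-paper proof to compare your proposal against; what follows assesses the sketch on its own terms.

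The geometric reduction $\K_n(r,\rho)\subseteq\{k\le n: S_k\in W\}$, where $W$ is the $2r$-fattening of the $2^{-d}\rho$-heavy grid boxes, is correct, input (ii) is a correct and relevant lemma (for fixed $\Lambda$ with $|\Lambda|\le V$ one has $\sup_z\bE_z[\ell_\infty(\Lambda)]\le CV^{2/d}$, hence by Kac's moment formula $\sup_z\bP_z(\ell_\infty(\Lambda)\ge t)\le C\exp(-ct/V^{2/d})$), and the final optimisation over $V$ is sound as algebra, including the way you feed in $V\lesssim\alpha L/\rho$ for the conditional estimate. The genuine gap lies in the union bound over shapes of $W$, and it is exactly the one you flag at the end. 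You invoke a joint heaviness estimate $\bP(\text{$N$ prescribed boxes heavy})\le\exp(-cN\rho r^{d-2})$ together with an entropy of order $O(n)\,C^{N}$, and \emph{neither} is uniformly valid. The entropy of choosing $N$ grid boxes reachable in $n$ steps is of order $\exp(dN\log n)$, not $O(n)C^N$; connectedness of the range does not bound how far apart consecutive heavy boxes are. More seriously, the joint heaviness bound is false for clustered configurations: if the $N$ boxes tile a ball of radius $R\asymp N^{1/d}r$, the walk can make all of them $2^{-d}\rho$-heavy simply by confining itself to a ball of radius $2R$ for a time of order $\rho R^d$, at cost only $\exp(-c\rho R^{d-2})=\exp(-c\,N^{1-2/d}\rho\,r^{d-2})$, which is much larger (more likely) than your $\exp(-cN\rho r^{d-2})$ as soon as $N\ge 2$. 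With the corrected clustered cost, the hypothesis $\rho r^{d-2}\ge C_0\log n$ no longer suffices to beat an entropy of order $\exp(dN\log n)$ when $N$ is large, so the flat union bound collapses. The missing idea, which is what \cite{AS,AS19} actually supply (and which the present paper echoes with the multi-scale decompositions of Sections~\ref{sec-d7}--\ref{sec-d5} and Section~\ref{sec.Gaussian}), is that cost and entropy must be coupled: clustered configurations, which carry the small heaviness cost, also carry small entropy (they are essentially determined by the ball they fill), while spread-out configurations carry the large entropy but also the large heaviness cost. Your sketch instead uses one uniform entropy estimate and one uniform heaviness estimate, each wrong on part of the configuration space, and the fact that the two errors cancel to produce the right exponent is a coincidence of the sketch, not a proof.
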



\subsection{Dimension seven and larger}\label{sec-d7}
We assume here that $d\ge 7$, and fix the value of $T$ as  
\begin{equation}\label{def.T.7}
T:= \lceil \gamma \cdot \zeta^{\frac{2}{d-2}} \rceil,
\end{equation}
for some constant $\gamma\in (0,1)$ (depending on dimension $d$) that will be fixed later (in the proof of Theorem \ref{thm:scen-5} below).  
Under the event of moderate deviations considered here (when the capacity of the range up to time $n$  is reduced by an amount $\zeta$ from its mean value), 
the walk typically folds its trajectory a time of order $\zeta$, in a region of volume $\zeta^{d/(d-2)}$. Thus the typical density of the range in the folding region is  
$$\overline \rho := \zeta^{-\frac{2}{d-2}}.$$
Define $\rho_i$, $r_i$, and $L_i$, for $i\in \Z$, by  
$$\rho_i := 2^{-i} \cdot \overline \rho, \qquad r_i^{d-2}\cdot\rho_i = C_0\log n, \qquad \text{and}\qquad L_i:= \zeta \cdot 2^{\frac{2i}{d-2}},$$
with $C_0$ as in Theorem \ref{lem-AS}.  Define
$$N:=\lceil \frac{d-2}{2}\cdot \log_2(n/\zeta)\rceil, \quad \text{and} \quad M:= \lceil  \log_2(1 /\overline \rho )\rceil,$$ 
so that $n \le L_N \le 2 n$, and $1\le \rho_{-M} \le 2$. 
For $-M\le i\le N$, set  
\begin{equation*}
\widehat \K_i : = \K_n(r_i,\rho_i)\setminus \bigcup_{-M\le j<i} 
\K_n(r_j,\rho_j),  
\end{equation*} 
with the convention that $\widehat \K_{-M} = \K_n(r_{-M},\rho_{-M})$. 
Finally for $A>0$, $\delta>0$, and $I< \min(M,N)$, define
\begin{equation*}
\mathcal E(A,\delta, I):=\left( \bigcap_{-I\le i\le I} \left\{|\widehat \K_i|\le \delta L_i\right\}\right)\cap \left( \bigcap_{I<i\le N}  \left\{|\widehat \K_i|\le A L_i\right\}\right)\cap \left( \bigcap_{-M\le i< -I}  \left\{|\widehat \K_i|\le A L_i\right\}\right).
\end{equation*}
Our main result here is the following proposition. 
\begin{proposition}\label{prop.Anxin7}
For any $A>0$, there exist $\delta>0$ and $I\ge 0$, such that for any $n\ge 2$, and $n^{\frac{d-2}{d}}\cdot \log n \le \zeta\le n$, 
$$\mathcal E(A,\delta, I) \ \subseteq \ \{\xi_n(T) \le \zeta\}.$$
\end{proposition}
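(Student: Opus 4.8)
The plan is to bound $\xi_n(T)$ on the event $\mathcal E(A,\delta,I)$ by splitting the sum defining $\xi_n(T)$ according to which of the shells $\widehat{\mathcal K}_i$ the index $k$ belongs to (together with the leftover set of indices $k$ that lie in no $\K_n(r_i,\rho_i)$ at all, i.e.\ where the local density is below $\overline\rho/2^N \approx \zeta/n$). For a fixed $k$, write the inner contribution as
\[
\sum_{x\in\cR_k} \bP_x(H^+_{\cR_k}=\infty)\cdot \varphi_T(x-S_k),
\]
and use Lemma~\ref{lem-asympt} to replace $\varphi_T$ by the minimum of the two bounds: on the ball $Q(S_k, 2r_i)$ use the $\|x\|^{2-d}$ bound combined with the escape-probability weight (this is exactly the quantity controlled by Lemma~\ref{lem-HD}, giving a contribution $\lesssim r_i^2$), and outside that ball use the $\tfrac{1}{T(1+\|x-S_k\|^{d-4})}$ bound. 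Wait — more precisely, outside $Q(S_k,2r_i)$ I want to apply Lemma~\ref{lem:rear} to the point set $(S_\ell)_{\ell\le n}$ with the density hypothesis at scale $r_i$: since $k\notin \bigcup_{j<i}\K_n(r_j,\rho_j)$ forces $\ell_n(Q(y,r_i))\le \rho_i r_i^d$ for the relevant boxes (this is the point of peeling off the shells), Lemma~\ref{lem:rear} gives a bound of order $\rho_i^{1-2/d}\,n^{2/d}$ for the sum of $\|S_\ell - S_k\|^{2-d}$ over $\ell$, hence a contribution $\lesssim \rho_i^{1-2/d} n^{2/d}/T$ after dividing by $T$ and bounding escape probabilities by one.

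Thus each index $k\in\widehat{\mathcal K}_i$ contributes at most $C(r_i^2 + \rho_i^{1-2/d}n^{2/d}/T)$. Summing over $k\in \widehat{\mathcal K}_i$, whose cardinality is at most $\delta L_i$ (for $|i|\le I$) or $AL_i$ (otherwise), and recalling $L_i = \zeta\, 2^{2i/(d-2)}$, $r_i^{d-2}\rho_i = C_0\log n$, $\rho_i = 2^{-i}\overline\rho$ with $\overline\rho = \zeta^{-2/(d-2)}$ and $T \asymp \gamma\,\zeta^{2/(d-2)}$, one checks that $L_i\cdot r_i^2$ and $L_i\cdot \rho_i^{1-2/d}n^{2/d}/T$ each telescope into a geometric series in $2^{\pm ci}$ whose total is of order $\zeta$ times a constant depending only on $d$ and $\gamma$ (and on $A$, for the tails $|i|>I$). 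The indices $k$ in no $\K_n$ at all are handled by the same Lemma~\ref{lem:rear} estimate at scale $r_N$ with density $\rho_N\asymp \zeta/n$, noting there are at most $n$ such indices; this again gives $O(\zeta)$. Choosing $\gamma$ small makes the dominant ($|i|\le I$) blocks contribute at most, say, $\zeta/2$; then choosing $\delta$ small (given $A$) shrinks those blocks further, and finally choosing $I$ large makes the geometric tails $\sum_{|i|>I}$ (which carry the constant $A$) contribute the remaining budget — so that the total is $\le \zeta$, which is the claim. The hypothesis $\zeta \ge n^{(d-2)/d}\log n$ enters precisely to guarantee $\rho_i r_i^{d-2}\ge C_0\log n$ across the whole range of $i$ (so that Theorem~\ref{lem-AS} and the $\K_n$ machinery apply) and to keep $r_i\le$ the relevant scales; one should double-check that $r_N$ stays $\le n^{1/d}$-ish so the boxes used in Lemma~\ref{lem:rear} genuinely tile the trajectory's bounding region.

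I would organize the write-up as: (i) fix $k$ and the index $i$ with $k\in\widehat{\mathcal K}_i$ (or the "no box" case), (ii) the two-scale split of the inner sum via Lemma~\ref{lem-asympt}, invoking Lemma~\ref{lem-HD} on the near ball and Lemma~\ref{lem:rear} on the far part, (iii) sum over $k$ in each shell using the cardinality bounds from $\mathcal E(A,\delta,I)$, (iv) sum the resulting geometric series in $i$ and collect the three budget pieces, (v) fix $\gamma$, then $\delta$, then $I$ in that order. The main obstacle I anticipate is getting the far-part estimate right: one has to verify that membership of $k$ in $\widehat{\mathcal K}_i$ (rather than some finer $\K_n(r_j,\rho_j)$, $j<i$) really does certify the density bound $\ell_n(Q(\cdot,r_i))\le \rho_i r_i^d$ on every box meeting the relevant region — there is a mismatch between the box used to define $\K_n$ (centered at $S_k$) and the grid $r_i\Z^d$ used in Lemma~\ref{lem:rear}, which forces either a harmless inflation of $r_i$ by a bounded factor or a covering argument; this bookkeeping, plus confirming the exponents in the geometric series genuinely have the right sign on both ends (so the sum over $i\in[-M,N]$ does not blow up), is where the real care is needed, and it is exactly the place where "the details of the analysis are different" from the volume case.
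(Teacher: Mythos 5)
Your overall architecture (peeling the trajectory into the shells $\widehat\K_i$, a near/far split of the inner sum via Lemma~\ref{lem-asympt}, Lemma~\ref{lem-HD} for the near part and Lemma~\ref{lem:rear} for the far part, then a geometric series in $i$ with the budget allocated by choosing $\gamma$, then $\delta$, then $I$) is indeed the paper's, but two of your key estimates do not close. First, the near field: bounding the contribution of each $k\in\widehat\K_i$ by $Cr_i^2/T$ via Lemma~\ref{lem-HD} on $Q(S_k,2r_i)$ and multiplying by $|\widehat\K_i|\le AL_i$ gives, with $L_i=\zeta 2^{2i/(d-2)}$, $r_i^2=(C_0 2^i\log n)^{2/(d-2)}\zeta^{4/(d-2)^2}$ and $T\asymp \zeta^{2/(d-2)}$, a term of order $\zeta^{1-a(1-a)}(\log n)^a\, 2^{4i/(d-2)}$ with $a=2/(d-2)$; the sum over $i\le N$ is dominated by $i=N$, where $2^{4N/(d-2)}\asymp (n/\zeta)^2$, and already for $d=7$ and $\zeta\asymp n^{5/7}\log n$ the total exceeds $\zeta$ by a positive power of $n$ (about $n^{2/5}$). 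The paper avoids this by cutting the near ball not at $r_i$ but at a much smaller, $i$-dependent scale $r_{J(i)}$ (with $J(i)$ decreasing in $i$, tuned so that $L_i r_{J(i)}^2/T\le C2^{-h}\zeta/\log n$ uniformly in $i$), and then controlling the intermediate annulus $Q(S_k,r_{i-1})\setminus Q(S_k,r_{J(i)})$ shell by shell, using that $k\notin\K_n(r_j,\rho_j)$ for $j<i$ bounds the occupation of $Q(S_k,r_j)$ by $\rho_j r_j^d$. This auxiliary scale is not bookkeeping; without it the near-field sum diverges.

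Second, the far field: the hypothesis of Lemma~\ref{lem:rear} is a \emph{uniform} upper density bound over all grid boxes, whereas $k\in\widehat\K_i$ only certifies $\ell_n(Q(S_k,r_j))<\rho_j r_j^d$ for $j<i$, i.e.\ for boxes centered at $S_k$ and at scales \emph{smaller} than $r_i$; at scale $r_i$ itself one has the \emph{reverse} inequality $\ell_n(Q(S_k,r_i))\ge \rho_i r_i^d$, and nothing at all is known about boxes far from $S_k$. So your assertion that $k\notin\bigcup_{j<i}\K_n(r_j,\rho_j)$ forces $\ell_n(Q(y,r_i))\le\rho_i r_i^d$ is not available, and no inflation of $r_i$ or grid-covering argument repairs it. The usable certificate is the one carried by the \emph{far} points: the paper classifies each $k'$ by its own shell index $j$ and applies Lemma~\ref{lem:rear} to $\K=\widehat\K_j$ at scale $r_{j-1}$ with density $\rho_{j-1}$ (its term $\Sigma_5$), yielding the summable factor $|\widehat\K_j|^{2/d}\rho_j^{1-2/d}$. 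This in turn leaves an intermediate region --- $k'\in\widehat\K_j$ with $j>i$ and $S_{k'}\in Q(S_k,r_{j-1})\setminus Q(S_k,r_{i-1})$ --- covered by neither certificate; the paper's $\Sigma_3$ handles it by counting (at most $C\rho_j r_j^d$ such $k'$ per $k$), and your split has no analogue of this term. Finally, your far-field bound also misreads Lemma~\ref{lem-asympt}: the $1/T$ gain comes with the exponent $d-4$, not $d-2$, so you cannot simultaneously invoke Lemma~\ref{lem:rear} (which needs the $\|\cdot\|^{2-d}$ kernel) and divide by $T$.
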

Before we give the proof of this proposition, let us show how it implies the upper bound in Theorem \ref{thm:d6}, as well as Theorem \ref{thm:scen-5} for dimension $7$ and higher, assuming for a moment the lower bound in Theorem \ref{thm:d6} (which will be proved later and independently in Section \ref{sec-LB}). 

\begin{proof}[Proof of Theorem \ref{thm:d6}: the upper bound] 
Note first that Proposition \ref{prop.Anxin7} and Theorem \ref{lem-AS} give 
$$\bP(\xi_n(T) > \zeta) \le \bP( \mathcal E(1,\delta, I)^c) \le C\exp(-c\zeta^{1-\frac{2}{d-2}}),$$
for some constant $c>0$, where $\delta$ and $I$ are those given by Proposition \ref{prop.Anxin7}, associated to $A=1$. Note also that by definition $T$ is of order $\zeta^{2/(d-2)}$, see \eqref{def.T.7}, and thus the above estimate together with 
Proposition \ref{prop.corrector} prove the upper bound in Theorem \ref{thm:d6}. 
\end{proof}

\begin{proof}[Proof of Theorem \ref{thm:scen-5}] 
Assume the lower bound in Theorem \ref{thm:d6}, and let us start with the proof of \eqref{result.path}. 
First choose $\gamma$ small enough in the definition \eqref{def.T.7} of $T$, so that 
conditionally on the event  of moderate deviations 
$MD(n,\zeta):=\{\cp(\cR_n)-\bE[\cp(\cR_n)]\le -\zeta\}$, the probability of the event $\{\xi_n(T) \le c\zeta\}$ goes to zero, with $c$ some appropriately chosen constant. Note that this is possible thanks to (the proof of) Proposition \ref{prop.corrector}. Then choose $A$ large enough, so that conditionally on $MD(n,\zeta)$, 
the probability of any of the events $\{|\widehat \K_i|>AL_i\}$, for $i\in \Z$, goes to zero (which is always possible thanks to Theorem \ref{lem-AS}), where implicitly $\zeta$ is replaced by $c\zeta$ in the definition of these events. 
Then Propositions \ref{prop.corrector} and \ref{prop.Anxin7} show that conditionally on $MD(n,\zeta)$, one of the events 
$\{|\widehat \K_i|>\delta L_i\}$, with $-I\le i \le I$, holds with probability going to $1$ (where $\delta$ and $I$ are given by Proposition \ref{prop.Anxin7}), 
and \eqref{result.path} follows from the second part of Theorem \ref{lem-AS}. 

Finally, the characterization of the capacity in \reff{result.capacite}, 
is a simple consequence of a general result of \cite{AS20b}, namely
(1.15) of Theorem 1.5, once we know \reff{result.path}.
\end{proof}

\begin{proof}[Proof of Proposition \ref{prop.Anxin7}]
Let $\widehat \K_{N+1}$ be such that
\begin{equation}\label{eq.Ki}
\widehat \K_{N+1}:=\{0,\dots,n\}\bs \bigcup_{-M\le i \le N} \widehat \K_i.
\end{equation}
Now, we decompose $\xi_n(T)$ over the various $\widehat \K_i$.
By \eqref{def-xin}, for any $I\le \min(-M,N)$, 
$$
\xi_n(T) \le \Sigma_1 + \Sigma_2 + 2 \Sigma_3 + 2 \Sigma_4+2\Sigma_5,
$$
where (note that $r_i\le \sqrt T$ when $i\le 0$, and
$\varphi_T(z)=\frac 1T G\star G_T(z)$ is defined in Lemma \ref{lem-asympt}) 
$$
\Sigma_1 : = \sum_{i=-I}^{N+1} \sum_{k\in \widehat \K_i} \sum_{x\in \cR_k} \varphi_T(x-S_k) \bP_x(H^+_{\cR_k}=\infty)\cdot  \1\{x\in Q(S_k,r_{i-1})\},$$
$$
\Sigma_2 : = \sum_{i=-M}^{-I} \sum_{k\in \widehat \K_i} \sum_{x\in \cR_k} \varphi_T(x-S_k) \bP_x(H^+_{\cR_k}=\infty)\cdot  \1\{x\in Q(S_k,\sqrt T)\},$$
$$
\Sigma_3:= \sum_{i=-M}^{N+1} \sum_{j=i}^{N+1} \sum_{k\in \widehat \K_i} \sum_{k'\in \widehat \K_j}  \varphi_T(S_{k'}-S_k) \cdot  \1\{S_{k'}\in Q(S_k,r_{j-1})\setminus Q(S_k,r_{i-1})\},$$
$$
\Sigma_4:= \sum_{i=-M}^{0} \sum_{j=i}^{0} \sum_{k\in \widehat \K_i} \sum_{k'\in \widehat \K_j}  \varphi_T(S_{k'}-S_k) \cdot  \1\{S_{k'}\notin Q(S_k,\sqrt T)\},$$
$$
\Sigma_5:= \sum_{i=-M}^{N+1} \sum_{j=\max(i,0)}^{N+1} \sum_{k\in \widehat \K_i} \sum_{k'\in \widehat \K_j}  \varphi_T(S_{k'}-S_k) \cdot  \1\{S_{k'}\notin Q(S_k,r_{j-1})\},$$

Note that the third term $\Sigma_3$ is not included in $\Sigma_1$ and $\Sigma_2$, since in these last two terms we sum over points of the space, not over time indices. This is important since one important tool used to control them is Lemma \ref{lem-HD}.

Now assume that $\mathcal E(A,\delta, I)$ holds, and let us bound $\Sigma_1$ first. 
For $-I\le i\le N+1$, define 
$$J(i) = - i + \lfloor  \frac{d-4}{2} \log_2(T)-\frac d2\log_2 (\log n) - \frac{d-2}{2} h\rfloor , $$
with $h$ some positive constant to be chosen later, so that for any $-I\le i\le N+1$, 
$$\frac {L_i \cdot r_{J(i)}^2}{T} \le C 2^{-h} \cdot \frac{\zeta}{\log n},$$
for some constant $C>0$ (that might change from line to line). Note here that since $-M\le \log_2(\gamma)  - \log_2(T)$, by choosing $\gamma$ small enough (once $h$ is fixed), one can always assume that $J(N+1)\ge -M$, which we will do now. 

Then Lemmas \ref{lem-HD} and \ref{lem-asympt} show that for any $-I\le i\le N+1$, on $\mathcal E(A,\delta, I)$, 
$$ \sum_{k\in \widehat \K_i} \sum_{x\in \cR_k} \varphi_T(x-S_k) \bP_x(H^+_{\cR_k}=\infty)\cdot  \1\{x\in Q(S_k,r_{J(i)})\}\le C|\widehat \K_i| \frac{r_{J(i)}^2}{T} \le CL_i\frac{r_{J(i)}^2}{T} \le C2^{-h}\frac{\zeta}{\log n}. $$
On the other hand, for $i$ such that $r_{J(i)} < r_{i-1}$, and $k\in \widehat \K_i$, we use that by definition the time spent on concentric shells around $S_k$ is bounded, up to distance $r_{i-1}$.
This gives for such $i$, using again Lemma \ref{lem-asympt}, 
\begin{align*}
& \sum_{k\in \widehat \K_i} \sum_{x\in \cR_k} \varphi_T(x-S_k) \cdot  \1\{x\in Q(S_k,r_{i-1})\setminus Q(S_k,r_{J(i)})\} \\
& \le C|\widehat \K_i| \sum_{J(i) \le j \le i-1} \frac{\rho_j r_j^d}{T r_j^{d-4}} \le C|\widehat \K_i| \sum_{J(i) \le j \le i-1} \frac{\log n}{T r_j^{d-6}} \\
& \le C|\widehat \K_i| \frac{\log n}{T}. 
\end{align*}
Moreover, by hypothesis on $\zeta$, one has $\frac{n \log n}{T} \le  C\zeta \cdot (\log n)^{-\frac{2}{d-2}}$,  and by \eqref{eq.Ki} it also holds  $\sum_i |\widehat \K_i| = n$. 
Therefore, by fixing now the constant $h$ large enough, we get for all $n$ large enough, 
$$\Sigma_1\le C\left\{(N-M)2^{-h}\frac{\zeta}{\log n} + \frac{n\log n}{T}\right\}  \le \frac{\zeta}{8}.$$ 
Similarly, using Lemmas \ref{lem-HD} and \ref{lem-asympt}, we get by choosing $I$ large enough, 
$$\Sigma_2 \le C \sum_{-M\le i \le -I}  |\widehat \K_i| \le C 2^{-\frac{2I}{d-2}} \cdot \zeta\le \frac{\zeta}{8}.$$

We consider the term $\Sigma_3$.
Note that for any $k$, by definition of $\widehat \K_j$, there are at most $C\rho_j r_j^d$ indices $k'\in \widehat \K_j$, 
such that $S_{k'}\in Q(S_k,r_{j-1})$.
Therefore, using Lemma \ref{lem-asympt}, we get for $n$ large enough, 
\begin{align*}
\Sigma_3 & \le C\sum_{-M\le i\le N+1} |\widehat \K_i| \sum_{i\le j\le N+1} \frac{\rho_j r_j^d}{Tr_{i-1}^{d-4}}
\le C \sum_{-M\le i\le N+1} \zeta\cdot \frac{r_{i-1}^2}{T^{\frac 2{d-2}}}\sum_{i\le j\le N+1} \frac{r_j^2 \log n}{Tr_{i-1}^{d-4}} \\
& \le C\, \frac{\zeta\cdot \log n}{T} \sum_{M\le i\le N+1} \frac{r_{N+1}^2}{r_{i-1}^{d-6}T^{\frac 2{d-2}}} 
\le C\frac{n\log n}{T} \le \frac{\zeta}{8}. 
\end{align*}
Next, using simply Lemma \ref{lem-asympt}, we obtain
(choosing first $I$ large enough, and then $\delta$ small enough) 
\[
\Sigma_4 \le C \sum_{-M\le i\le 0} 
\sum_{-M\le j\le 0} \frac{|\widehat \K_i| \cdot |\widehat \K_j|}{T^{\frac{d-2}{2}}} 
 \le C \zeta  \sum_{-M\le i\le -I} A 2^{\frac{2i}{d-2}} + C\delta\zeta
\sum_{-I\le i \le 0} 2^{\frac{2i}{d-2}} \le \frac{\zeta}{8}.
\]

By Lemma \ref{lem:rear}, one has for some $C>0$,
(choosing first $I$ large enough, and then $\delta$ small enough)
\begin{align*}
\Sigma_5&\le \sum_{-M\le i\le N+1} |\widehat \K_i| \sum_{j\ge \max(i,0)} |\widehat \K_j|^{2/d} \rho_j^{1-\frac 2d} \\
& \le C \sum_{-M\le i\le N+1} |\widehat \K_i| \sum_{j\ge \max(i,0)} 2^{\frac{4j}{d(d-2)} - j(1-\frac 2d)} \\
& \le C  \sum_{-M\le i\le N+1} |\widehat \K_i|  2^{-\max(i,0)(1-\frac 2{d-2})} \\
& \le C\zeta  \left\{\delta \sum_{-I\le i\le I} 2^{-i\frac{d-6}{d-2}} + A \sum_{i\ge I} 2^{-i\frac{d-6}{d-2}} +A\sum_{M\le i\le -I} 2^{\frac{2i}{d-2}} \right\}  \le \frac{\zeta}{8},
\end{align*}
This concludes the proof of the proposition.
\end{proof}


\subsection{Dimension five}\label{sec-d5}
We assume here that $d=5$ and let 
$$T:=\lceil \gamma \cdot (\zeta n)^{1/3}\rceil,$$
with $\gamma$ some constant (chosen similarly as in the previous subsection), and 
$$\overline{\rho}:= \zeta^{5/3} \, n^{-7/3}.$$
Define next $\rho_i$, $r_i$, and $L_i$, for $i\in \Z$, by  
$$\rho_i := 2^i \cdot \overline \rho, \qquad r_i^3\cdot\rho_i = C_0\log n, \qquad \text{and}\qquad L_i:= n \cdot 2^{-\frac{2i}{3}},$$
with $C_0$ as in Theorem \ref{lem-AS}. Let for $i\in \Z$, 
$\widehat \K_i:=\K_n(r_i,\rho_i)\setminus 
\bigcup_{ j>i} \K_n(r_j,\rho_j).  $
Then, let $N$ be the smallest integer, such that $1\le r_N\le 2$, and for $A>0$, $\delta>0$, and $0\le I\le N$, let  
\begin{equation*}
\mathcal E(A,\delta, I):=\left( \bigcap_{-I\le i\le I} \left\{|\widehat \K_i|\le \delta L_i\right\}\right)\cap \left( \bigcap_{I<i\le N}  \left\{|\widehat \K_i|\le A L_i\right\}\right).
\end{equation*}
Our main result here is the following proposition, which implies both the upper bound in Theorem \ref{thm:d5}, as well as Theorem \ref{thm:scen-5} for $d=5$. Since this can be 
done in exactly the same way as in dimension $7$ and higher, we will not repeat the arguments here. 
\begin{proposition}\label{prop.Anxin5} 
For any $A>0$, there exist $\delta>0$ and $I\ge 0$, such that for any $n\ge 2$, and $n^{5/7}\cdot \log n \le \zeta\le n$, 
$$\mathcal E(A,\delta, I) \ \subseteq \ \{\xi_n(T) \le \zeta\}.$$
\end{proposition}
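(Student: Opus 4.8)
\textbf{Proof proposal for Proposition \ref{prop.Anxin5}.}

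The plan is to mimic almost verbatim the strategy of the proof of Proposition \ref{prop.Anxin7}, decomposing $\xi_n(T)$ over the classes $\widehat\K_i$, and bounding separately a near-diagonal part (summed over points in space, using Lemma \ref{lem-HD}), an intermediate part (summed over time indices using the local occupation bounds built into the definition of $\widehat\K_i$), and a far part (using the rearrangement estimate Lemma \ref{lem:rear}). Throughout, the key arithmetic inputs are the two bounds of Lemma \ref{lem-asympt} for $\varphi_T$, the choices $T\approx(\zeta n)^{1/3}$, $\overline\rho=\zeta^{5/3}n^{-7/3}$, $\rho_i=2^i\overline\rho$, $L_i=n\,2^{-2i/3}$, and the constraint $\zeta\ge n^{5/7}\log n$, which is exactly what one needs so that the unconstrained ``error term'' $n\log n/T$ is at most $\zeta/8$ (since $n\log n/T\approx n^{2/3}\log n\,\zeta^{-1/3}\le\zeta$ precisely when $\zeta^{4/3}\ge n^{2/3}\log n$, i.e. $\zeta\ge n^{1/2}(\log n)^{3/4}$, which is implied by the hypothesis). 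Note one structural difference from $d\ge 7$: here $\rho_i$ is \emph{increasing} in $i$ (denser cubes have higher index), so the index $N$ where $r_N\approx 1$ is the largest, and $\widehat\K_i$ removes the strictly denser (higher-index) classes; I would set up $\widehat\K_{N+1}$ and the leftover index structure accordingly.

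First, I would fix $J(i)$ as in the $d\ge 7$ argument, i.e. the scale below which $L_i\,r_{J(i)}^2/T\le C2^{-h}\zeta/\log n$, and split the near-diagonal contribution $\Sigma_1$ into the part inside $Q(S_k,r_{J(i)})$, controlled by Lemma \ref{lem-HD} (which gives a factor $r_{J(i)}^2$) together with the second bound of Lemma \ref{lem-asympt} (the factor $1/(T(1+\|x\|^{d-4}))$), and the shell part between $r_{J(i)}$ and the relevant upper radius, controlled by the occupation bound $\ell_n(Q(x,r_j))\le\rho_j r_j^d$ combined with $\rho_jr_j^3=C_0\log n$; summing the geometric series in $j$ and using $\sum_i|\widehat\K_i|=n$ gives $\Sigma_1\le C\{(N)\,2^{-h}\zeta/\log n + n\log n/T\}\le\zeta/8$ for $h$ large. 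Next, a term $\Sigma_2$ collecting the low-index (sparse) classes $i\le -I$, where $r_i\le\sqrt T$, is bounded by $C\sum_{i\le -I}|\widehat\K_i|\le C2^{-2I/3}\zeta\le\zeta/8$ for $I$ large; a term $\Sigma_3$ summing over time-index pairs with $S_{k'}$ in an intermediate shell around $S_k$, using that there are at most $C\rho_jr_j^d$ such indices $k'$ in $\widehat\K_j\cap Q(S_k,r_{j-1})$, again telescopes to $Cn\log n/T\le\zeta/8$; a diagonal-at-large-scales term $\Sigma_4$ handled by the pure $T^{-(d-2)/2}=T^{-3/2}$ bound of Lemma \ref{lem-asympt}; and finally $\Sigma_5$, the genuinely long-range part with $\|S_{k'}-S_k\|\ge 2r_{j-1}$, bounded via Lemma \ref{lem:rear} by $\sum_i|\widehat\K_i|\sum_{j\ge\max(i,0)}|\widehat\K_j|^{2/5}\rho_j^{3/5}$, which with $L_j=n2^{-2j/3}$ and $\rho_j=2^j\overline\rho$ produces an exponent $\tfrac{4j}{d(d-2)}-j(1-\tfrac2d)=-j/5\cdot\tfrac{d-6}{d-2}$-type geometric series — here with $d=5$ the sign of $d-6$ is \emph{negative}, so I expect this is exactly the step where the $d=5$ analysis genuinely diverges from $d\ge 7$: the sum over $j$ is no longer summable on its own, and one must use instead that $|\widehat\K_j|\le AL_j$ together with the fact that the total $\sum_j|\widehat\K_j|=n$ and that $L_j$ saturates at $j=N$, extracting a factor $\overline\rho^{3/5}L_N^{2/5}\cdot n = \zeta^{3/5\cdot 5/3}n^{-3/5\cdot 7/3}\cdot n^{2/5}\cdot n$ and checking this is $\le\zeta$.

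The main obstacle I anticipate is precisely this $\Sigma_5$ bound in $d=5$: unlike in high dimensions, the far-field sum is dominated by its smallest-distance (largest-index, densest) scale rather than being controlled by a convergent geometric series, so I would need to be careful to carry the constraint $|\widehat\K_i|\le AL_i$ (and $\le\delta L_i$ for $|i|\le I$) all the way through, sum $\rho_j^{1-2/d}$ up to $j=N$ where $r_N\approx 1$ hence $\rho_N\approx C_0\log n$, and verify that the resulting bound $C\overline\rho^{\,1-2/d}(\log n)^{?}\cdot n\cdot(\text{something})$ reduces to $\zeta/8$ exactly under $\zeta\ge n^{5/7}\log n$ — this is where the otherwise mysterious exponent $5/7$ must emerge. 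A secondary but routine annoyance is bookkeeping the direction reversal in the index ordering (densest class has the highest index $N$, the removal in $\widehat\K_i$ is over $j>i$), which slightly changes which terms enter $\Sigma_2$ versus $\Sigma_1$ relative to the $d\ge 7$ proof, but introduces no new ideas. Once all five pieces are $\le\zeta/8$, summing gives $\xi_n(T)\le 5\zeta/8\le\zeta$ on $\mathcal E(A,\delta,I)$, which is the claim.
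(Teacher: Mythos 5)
Your high-level plan (split $\xi_n(T)$ over the density classes $\widehat\K_i$, control the near-diagonal part with the second bound of Lemma~\ref{lem-asympt} plus occupation bounds, and the far field with Lemma~\ref{lem:rear}) is the right one, and you correctly sense that in $d=5$ the index orientation flips and the long-range geometric series is dominated by the dense end rather than the sparse end. Your decomposition differs from the paper's, though: the paper uses a streamlined four-term split ($\Sigma_1$ a pure time-index near-diagonal sum, $\Sigma_2$ the far-field time-index sum, and $\Sigma_3,\Sigma_4$ for the leftover $\widetilde\K_0$), and in particular never invokes Lemma~\ref{lem-HD} in $d=5$; you try to carry over the full five-term $d\ge 7$ machinery including the Lemma~\ref{lem-HD} step and the separate $T^{-(d-2)/2}$ diagonal term $\Sigma_4$, which introduces unnecessary bookkeeping but is not in itself wrong.

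There is, however, a genuine arithmetic gap that goes to the heart of the dimensional distinction. You write that the unconstrained error term is $n\log n/T\approx n^{2/3}\zeta^{-1/3}\log n$, and deduce that the required threshold is $\zeta\ge n^{1/2}(\log n)^{3/4}$ --- which is strictly weaker than the stated hypothesis $\zeta\ge n^{5/7}\log n$, and you wave this away with ``implied by the hypothesis.'' But this is not the right error term in $d=5$. As Remark~\ref{rem.5} makes explicit, and as the paper's own bound on $\Sigma_1$ confirms, in $d=5$ the mean of $\xi_n(T)$ is of order $n/\sqrt{T}$, not $n/T$: the near-diagonal sum (after summing over dyadic shells with occupation bounded by $\rho_j r_j^5 = r_j^2 C_0\log n$ and using both branches of Lemma~\ref{lem-asympt}, switching at $\|x\|\approx\sqrt T$) produces $C\log n\sum_i|\widehat\K_i|\big(\sum_{j\le J}r_j^{-1}+\sum_{j>J}r_j/T\big)\le Cn\log n/\sqrt T$. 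With $T\approx(\zeta n)^{1/3}$ this is $n^{5/6}\zeta^{-1/6}\log n$, and requiring this to be $\le\zeta/8$ gives $\zeta^{7/6}\gtrsim n^{5/6}\log n$, i.e.\ precisely $\zeta\gtrsim n^{5/7}(\log n)^{6/7}$. So the $5/7$ exponent comes from the near-diagonal $\Sigma_1$ term, not from the far-field $\Sigma_5$ sum as you conjecture. Your $\Sigma_5$ estimate in fact closes on $\mathcal E(A,\delta,I)$ for any $\zeta\le n$ once $I$ is large and then $\delta$ small (the factor $L_j^{2/5}\rho_j^{3/5}=(\zeta/n)2^{j/3}$ and the geometric tail in $2^{-i/3}$ handle it; your computation $\overline\rho^{3/5}L_N^{2/5}\cdot n$ drops the $2^{j/3}$ structure and is not how the bound is obtained). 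Because you have the wrong error term and hence the wrong threshold derivation, as written your argument does not actually demonstrate that the constraint $\zeta\ge n^{5/7}\log n$ is both needed and sufficient --- you would want to redo the $\Sigma_1$ estimate tracking the $\sqrt T$ crossover carefully before the proof closes.
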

\begin{proof}
Given some $I\ge 0$, let 
$$
\widetilde \K_0:= \{0,\dots,n\}\bs \bigcup_{-I\le i\le N}\K_n(r_i,\rho_i).
$$
Note that for any $I$, 
$$
\xi_n(T)\le 2\Sigma_1 + 2\Sigma_2 +  \Sigma_3 + \Sigma_4,$$
where, 
$$\Sigma_1 :=  \sum_{-I\le i\le N} \sum_{k\in \widehat \K_i} \sum_{k'=0}^n  \varphi_T(S_{k'}-S_k) \cdot  \1\{S_{k'}\in Q(S_k,r_{i+1})\},$$
$$\Sigma_2:=  \sum_{-I\le i \le N} \sum_{k\in \widehat \K_i} \sum_{-I\le j\le i} \sum_{k'\in \widehat \K_j} \varphi_T(S_{k'}-S_k) \cdot  \1\{S_{k'}\notin Q(S_k,r_{j+1})\},$$
$$\Sigma_3: = \sum_{k\in \widetilde \K_0} \sum_{x\in \cR_k}  \varphi_T(x-S_k) \cdot  \1\{x\in Q(S_k,r_{-I})\},$$
$$\Sigma_4:= \sum_{k\in \widetilde \K_0} \sum_{x\in \cR_k}  \varphi_T(x-S_k) \cdot  \1\{x\notin Q(S_k,r_{-I})\},$$

Assume now that $\mathcal E(A,\delta, I)$ holds. Let $J$ be the smallest integer, such that $r_J\le \sqrt{T}$. 
Using Lemma \ref{lem-asympt}, and the bound $\sum_i |\widehat \K_i|\le n$, 
we get for some $C>0$, and $n$ large enough, 
\begin{align*}
\Sigma_1&\le  C \sum_{-I\le i \le N} |\widehat \K_i| \left(\sum_{i\le j\le J} \frac{\rho_jr_j^5}{r_j^3} + \sum_{j>\max(i,J)} \frac{\rho_jr_j^5}{Tr_j}\right) \\
& \le C\log n \sum_{-I\le i \le N} |\widehat \K_i| \left(\sum_{i\le j\le J} \frac{1}{r_j} + \sum_{j>\max(i,J)} \frac{r_j}{T}\right)
 \le C\frac{n\log n}{\sqrt T} \le \frac{\zeta}{8},  
\end{align*}
using also the hypothesis on $\zeta$ for the last inequality. The same argument gives as well $\Sigma_3\le \zeta/4$.

Using in addition Lemma \ref{lem:rear}, we get 
taking first $I$ large enough, and then $\delta$ small enough. 
\begin{align*}
\Sigma_2 &\le  C \sum_{-I\le i \le N} |\widehat \K_i | \sum_{-I\le j\le i} |\widehat \K_j|^{2/5} \rho_j^{3/5} 
\le C \frac{\zeta}{n}\cdot \sum_{-I\le i\le N } |\widehat \K_i | \sum_{-I\le j\le i} 2^{j(-\frac{4}{15}+\frac 35)} \\
& \le C \delta\zeta\sum_{-I\le i\le I} 2^{-i/3}  + C\cdot \zeta\cdot A
\sum_{i\ge I} 2^{-i/3}  \le \frac{\zeta}{8},
\end{align*}
The same argument gives as well $\Sigma_4\le \zeta/4$, 
concluding the proof. 
\end{proof}

\section{Lower Bounds}\label{sec-LB}
We prove here the lower bounds in Theorems \ref{thm:d6} and \ref{thm:d5}. 
In fact in dimension $5$ the result covers a larger range of possible values for $\zeta$.
\begin{proposition} \label{prop.lower5}
Assume $d=5$. 
There exist positive constants $\varepsilon_0$ and $\underline{\kappa}$, such that for any $n\ge 2$, and any $\sqrt n (\log n)^3 \le \zeta \le \epsilon_0 n$, 
one has 
\begin{eqnarray*}
\bP\left(\cc{\RR_n}-\bE[\cc{\RR_n}] \le - \zeta\right)  \ge \exp(- \underline{\kappa} \cdot (\frac{\zeta^2}n)^{1/3}). 
\end{eqnarray*}
\end{proposition}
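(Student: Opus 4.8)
The plan is to produce an explicit folding strategy realizing the downward deviation, and then estimate the probability cost of this strategy from below. Guided by the heuristic in the introduction, in dimension $d=5$ the optimal scenario is \emph{time-homogeneous}: the walk spends essentially all its time of order $n$ inside a ball of radius $R$ with $R^{d-2}=R^3$ of order $n^2/\zeta$. I would therefore fix $R:=c_0 (n^2/\zeta)^{1/3}$ for a small constant $c_0$, and consider the event $E_R$ that the walk stays inside the ball $B(0,R)$ up to time $n$, i.e. $\{S_k\in B(0,R)\text{ for all }k\le n\}$. By the classical small-ball / eigenvalue estimate for simple random walk confined to a ball, $\bP(E_R)\ge \exp(-Cn/R^2)=\exp(-C(\zeta^2/n)^{1/3})$ (up to adjusting constants), which is exactly the target lower bound; so the whole point is to show that on (a suitable sub-event of) $E_R$ one indeed has $\cp(\cR_n)-\bE[\cp(\cR_n)]\le -\zeta$.

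The second ingredient is therefore a deterministic (or high-probability) upper bound on $\cp(\cR_n)$ conditionally on $E_R$, which must beat $\bE[\cp(\cR_n)]=\gamma_d n + O(\sqrt n)$ by $\zeta$. The cleanest route is via the decomposition \eqref{decomp-ASS}–\eqref{decomp-2}: writing $\cR_n$ as a union of $\lfloor n/T\rfloor$ consecutive pieces of length $T$ (with $T$ chosen of order $R^{d-2}$, say $T\asymp n^2/\zeta$, so that each piece sits inside a ball of radius $R$), one gets
\[
\cp(\cR_n)\ \le\ \sum_{\ell} \cp(\cR(I_\ell)) \ -\ \sum_{\ell} \chi_\C\big(\cR(\widetilde I_\ell),\cR(I_{\ell+1})\big),
\]
and the sum of the $\cp(\cR(I_\ell))$ is, with high probability on $E_R$ and by Proposition~\ref{lem.var.cap} (Bernstein), at most $\gamma_d n + O(\sqrt{n\log n}\cdot\sqrt{n/T})$ plus a negligible error — so I only lose a lower-order amount there. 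The gain must come from the cross-terms: on $E_R$ the whole range is contained in a ball of radius $R$, so consecutive pieces are at distance $O(R)$, the escape probabilities are of constant order on average, and $\chi_\C(\cR(\widetilde I_\ell),\cR(I_{\ell+1}))$ is of order $G(R)\cdot \min(T,R^{d-2})^2\asymp \frac{1}{R^{d-2}}T^2$. Summed over the $\asymp n/T$ values of $\ell$, this yields a total cross-term of order $\frac{n T}{R^{d-2}}\asymp \zeta$ by our choice of parameters, which is precisely the deficit we need. To make this rigorous one wants a \emph{lower} bound on the cross-term, so I would restrict further to a sub-event on which each piece $\cR(I_\ell)$ fills a definite fraction of the ball (so that $\cp(\cR(I_\ell))\ge c R^{d-2}$ and the escape probabilities summed over the range are $\ge c|\cR(I_\ell)|\ge cR^{d-2}$... adjusted), and on which the pieces are spatially aligned — e.g. condition the confined walk to have a non-degenerate occupation profile, which costs only a further constant factor in the exponent. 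A second-moment / concentration argument for $\chi_\C$ under the confined measure then gives $\chi_n \ge c\zeta$ with probability bounded away from $0$.

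In summary, the steps I would carry out, in order, are: (1) fix $R\asymp (n^2/\zeta)^{1/3}$ and $T\asymp R^{d-2}$ and define $E_R$ (confinement) together with a sub-event $G_R$ prescribing a good occupation profile inside the ball; (2) invoke the small-ball estimate to get $\bP(E_R\cap G_R)\ge \exp(-\underline\kappa(\zeta^2/n)^{1/3})$; (3) use the decomposition to bound $\cp(\cR_n)$ from above by $\sum_\ell \cp(\cR(I_\ell))$ minus the cross-term sum; (4) bound $\sum_\ell \cp(\cR(I_\ell))\le \bE[\cp(\cR_n)] + o(\zeta)$ on $E_R$ via concentration (Proposition~\ref{lem.var.cap}) and Lemma~\ref{lem.exp.cap}; (5) bound the total cross-term from below by $\ge 2\zeta$ on $E_R\cap G_R$ using \eqref{decomp-2}, the Green's function bound \eqref{Green} on scale $R$, and the lower bounds on capacities and escape probabilities of the well-spread pieces; (6) conclude. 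The main obstacle I anticipate is step (5): one needs a genuine \emph{lower} bound on $\chi_\C$ (all the a priori inequalities in the paper, \eqref{borne.cap.AUB}, \eqref{decomp-2bis}, go the wrong way), so the real work is in controlling the escape-probability factors $\bP_x(H^+_{\cR(\widetilde I_\ell)}=\infty)$ from below on a positive-density subset of the range and in showing that consecutive pieces, under the confined-and-profiled measure, are sufficiently close that $\sum_{x,y} \bP_x(\cdots)G(x-y)\bP_y(\cdots)$ is genuinely of order $R^{d-2}$ rather than smaller — this is where a careful choice of the sub-event $G_R$ and a short second-moment computation for the cross-term will be needed. A parallel subtlety is that one must not lose more than a constant factor in the exponent when passing from $E_R$ to $E_R\cap G_R$, which is why $G_R$ should be an event of probability bounded below (not merely polynomially small) under the confined walk.
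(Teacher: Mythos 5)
Your overall strategy---confine the walk in a ball of radius $R\asymp(n^2/\zeta)^{1/3}$, pay $\exp(-Cn/R^2)=\exp(-C(\zeta^2/n)^{1/3})$, and extract the deficit from the cross-terms in the capacity decomposition---matches the paper's, and you correctly flag that the only genuinely hard step is obtaining a \emph{lower} bound on the cross-term. But you stop short of resolving that step, and your proposed resolution (restrict to a sub-event $G_R$ prescribing an occupation profile, then run a second-moment computation for $\chi_\C$) is both vague and unnecessary: the paper closes the gap with a much simpler observation that you have not identified.

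The key point you are missing is a factorization. The paper decomposes $\cR_n$ into just \emph{two} pieces ($A=\cR_m$ and $B=\cR[m,n]$, with $m\approx 9n/10$), not $n/T$ pieces, and uses the variant
\[
\cp(A\cup B)\ \le\ \cp(A)+\cp(B)-\chi_0(A,B),\qquad
\chi_0(A,B)=\sum_{x\in A\setminus B}\sum_{y\in B}\bP_x(H^+_{A\cup B}=\infty)\,G(y-x)\,\bP_y(H^+_B=\infty).
\]
On the confinement event $F=\{\|S_k\|\le\rho\, n^{2/3}\zeta^{-1/3},\,k\le n\}$, all pairs $(x,y)$ are within distance $2\rho\, n^{2/3}\zeta^{-1/3}$, so \eqref{Green} gives $G(y-x)\ge c_\rho\,\zeta/n^2$ uniformly. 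You can then pull the Green's function out of the double sum, and the remaining sums are \emph{exactly} capacities by \eqref{cap.def2}: $\sum_{y\in B}\bP_y(H^+_B=\infty)=\cp(B)$ and $\sum_{x\in A\setminus B}\bP_x(H^+_{A\cup B}=\infty)\ge \cp(A\cup B)-\cp(B)$. There is no need to lower-bound individual escape probabilities or to engineer an occupation profile; the escape probabilities sum to capacities automatically. Since on a suitable high-probability event both $\cp(B)$ and $\cp(A\cup B)-\cp(B)$ are of order $n$ (by Lemma~\ref{lem.exp.cap}, Proposition~\ref{cor:upward}, and subadditivity), one gets $\chi_0(A,B)\gtrsim (\zeta/n^2)\cdot n\cdot n = \zeta$ directly, and the constant can be made as large as needed by taking $\rho$ small. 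Your $n/T$-piece decomposition and the associated concentration bookkeeping are then superfluous, and the troublesome conditioning of the piece-capacities on the confinement event disappears, because the paper controls $\cp(A)$, $\cp(B)$, $\cp(A\cup B)$ only through unconditional moderate-deviation bounds (Proposition~\ref{cor:upward}) combined with a union bound against $F$.

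So the gap is concrete: you state the cross-term lower bound as the crux but propose to attack it with an undeveloped second-moment argument on a profiled sub-event, whereas the actual mechanism is the pointwise lower bound $G(y-x)\gtrsim\zeta/n^2$ on $F$ together with the identity that the escape-probability sums are capacities. Without that observation your step (5) does not go through as written.
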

\begin{proof} 
The proof of \eqref{decomp-2} in \cite{ASS19} reveals that for any finite $A,B\subset \Z^d$, one has also 
\begin{equation}\label{decomp.lower}
\cp(A\cup B) \le \cp(A) + \cp(B) - \chi_0(A,B), 
\end{equation}
with 
$$\chi_0(A,B):=\sum_{x\in A\setminus B}\sum_{y\in B} \bP_x(H_{A\cup B}^+ = \infty) G(y-x) \bP_y(H_B^+ = \infty).$$
Now given $n\ge 1$, set $\ell  = \lfloor \frac{n}{10} \rfloor $, and $m=n-\ell$. 
We apply \eqref{decomp.lower} with $A = \RR_m$ and $B=\RR[m, n]$. 
Fix $\varepsilon_0>0$ (later chosen small enough), and define 
$$E:=\left\{\ccc{\RR_n} \ge -\varepsilon_0 n\right\},$$
where we use the notation $\ccc{\RR_n}$ for the centered capacity. 
Using \eqref{decomp.lower}, Lemma \ref{lem.exp.cap}, and Proposition \ref{cor:upward}, we deduce that 
for some constant $c>0$,  
\begin{equation}\label{lower5.1}
\begin{split}
\bP\left(-\varepsilon_0n\le \ccc{\RR_n}\le - \zeta\right) 
 & \ge \ \bP(E, \, \chi_0(\RR_m,\RR[m,n]) \ge 4\zeta) - \bP(\ccc{\RR_m}\ge \zeta) \\
 &\qquad  - \bP(\ccc{\RR[m,n]} \ge \zeta) \\
& \ge\  \bP(E, \, \chi_0(\RR_m,\RR[m,n])\ge 4\zeta) - 2\exp(-c\frac{\zeta^2}{n(\log n)^3}).
\end{split}
\end{equation}
Note that when $\zeta\ge \sqrt n (\log n)^3$, then $\zeta^2/(n(\log n)^3) \ge (\log n) (\zeta^2/n)^{1/3}$, and 
therefore the last term above is negligible. 
Now, let $\rho>0$ be some small constant (to be fixed later) and consider the event 
$$F:= \{\|S_k\| \le \rho\cdot n^{2/3}\, \zeta^{-1/3}, \quad \text{for all } k\le n\}.$$
Note that by \eqref{Green} and \eqref{cap.def2}, on the event $F$, 
\begin{equation}\label{chi0Rm}
\chi_0(\RR_m,\RR[m,n])\ \ge\ c_\rho\cdot  \frac{\zeta}{n^2}\cdot  \cp(\RR[m,n]) \cdot \left(\cp(\RR_n) - \cp(\RR[m,n])\right),
\end{equation}
for some constant $c_\rho>0$, going to infinity as $\rho$ goes to zero. 
Furthermore, by \eqref{cap.subadd}, one has 
$$\cp(\RR_n) \le \cp(\RR_m) + \cp(\RR[m,n]),$$
and thus by Lemma \ref{lem.exp.cap} and Proposition \ref{cor:upward}, by taking $\varepsilon_0$ small enough, we get for $n$ large enough, 
\begin{align*}
\bP\left(\cp(\RR[m,n]) \le \gamma_5 \frac{\ell}{2},\, E\right) \ &  
\le\ \bP\left( \cp(\RR_m) \ge \gamma_5 (m +\ell/3) \right)\\
&\le \ 
\bP\left(  \ccc{\RR_m} \ge \gamma_5 \frac{\ell}{10} \right)\le \ \exp\left(- c' \frac{n}{(\log n)^3}\right),
\end{align*}
for some constant $c'>0$, and with $\gamma_5$ as in \eqref{cap.limit}. 
Similarly one has for some possibly smaller constant $c'>0$,
$$
\bP\left(\cp(\RR_n) - \cp(\RR[m,n])\le \gamma_5 \frac n 4,\, E\right)\ 
\le\ \bP\left(\ccc{\RR[m,n]}\ge \gamma_5\ell \right) \le \ \exp\left(- c' \frac{n}{(\log n)^3}\right).
$$
Then \eqref{chi0Rm} gives
\begin{align*}
\bP( \chi_0(\RR_m,\RR[m,n]) \ge \frac{c_\rho \gamma_5^2}{100}\cdot \zeta, \, E)\ 
& \ge\ \bP( \chi_0(\RR_m,\RR[m,n]) \ge \frac{c_\rho \gamma_5^2}{100}\cdot \zeta, \, E \cap F) \\
& \ge \ \bP(E\cap F) - 2 \exp\left(- c' \frac{n}{(\log n)^3}\right)\\
&\ge \ \bP(F) - \bP(E^c) - 2\exp\left(- c' \frac{n}{(\log n)^3}\right).
\end{align*}
Coming back to \eqref{lower5.1}, and choosing $\rho$, such that $c_\rho \ge 300/\gamma_5^2$, we deduce that 
\begin{align*}
\bP\left( \ccc{\RR_n} \le - \zeta\right)  \ & = \ \bP\left(-\varepsilon_0n
\le \ccc{\RR_n}\le - \zeta\right)+ \bP(E^c)\\
& \ge \ \bP(F) - 2\exp\left(- c' \frac{n}{(\log n)^3}\right) - 2\exp(-c\, (\log n)\cdot \zeta^{2/3} n^{-1/3}).   
\end{align*}
Moreover, it is well known that for any $\rho>0$, there exists $\kappa>0$, such that 
$$\bP(F) \ \ge\  \exp(-\kappa \cdot \zeta^{2/3}n^{-1/3}),$$
and this concludes the proof. 
\end{proof}

In dimension $6$ and more the result reads as follows. 
\begin{proposition}\label{prop.lower6}
Assume $d\ge 7$. There exist positive constants $\varepsilon_0$, $K$ and $\underline{\kappa}$, 
such that for any $n\ge 2$ and any $K n^{\frac{d-2}{d}}  \le \zeta \le \varepsilon_0\, n$, one has 
$$
\bP\left(\cc{\RR_n} - \bE[\cc{\RR_n}]\le -\zeta\right) 
\ \ge\  \exp\left(- \underline{\kappa} \cdot \zeta^{1-\frac{2}{d-2}} \right).$$
In dimension $d=6$, the same result holds for $n^{\frac{d-2}{d}}(\log \log n)^2 \le \zeta \le \varepsilon_0\, n$. 
\end{proposition}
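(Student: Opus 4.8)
The plan is to realize this downward deviation by a time-inhomogeneous confinement of an \emph{initial} segment of the trajectory, in the spirit of the heuristics of Section~\ref{sec-intro} and of the proof of Proposition~\ref{prop.lower5}: one forces $\cR_\tau$ into a ball so small that its capacity becomes negligible compared with its typical value $\gamma_d\tau$, while letting the walk run freely on $[\tau,n]$. (Unlike in dimension~$5$, where the whole trajectory must fold and the cross-term is essential, here it is enough to fold an initial piece and to invoke sub-additivity.) Concretely, I would fix a large constant $c$ and a small constant $\delta$, both to be tuned at the end, set $\tau:=\lceil c\,\zeta\rceil$ and $R:=\lceil \delta\,\zeta^{1/(d-2)}\rceil$, and work on the confinement event
\[
F:=\big\{S_k\in Q(R)\ \text{ for all }0\le k\le\tau\big\}.
\]
Choosing $\varepsilon_0$ small enough gives $\tau\le n/2$ when $\zeta\le\varepsilon_0 n$, so $n-\tau\ge n/2$. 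As $\tau/R^2$ is of order $\zeta^{1-2/(d-2)}\to\infty$, the classical lower bound for the walk to stay in a box of side $R$ up to time $\tau$ (iterating over time windows of length of order $R^2$, each having probability bounded below by a dimensional constant) yields $\bP(F)\ge\exp(-\kappa\,\tau/R^2)\ge\exp(-\kappa'\,\zeta^{1-2/(d-2)})$, with $\kappa,\kappa'$ depending only on $d,c,\delta$.

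On $F$ we have $\cR_\tau\subset Q(R)$, so by monotonicity~\eqref{cap.mon} and the classical estimate $\cp(Q(R))\le C_*R^{d-2}$ (already used in the proof of Lemma~\ref{lem-HD}) we get $\cp(\cR_\tau)\le C_*R^{d-2}\le a\,\zeta$, where $a=a(\delta)$ can be made as small as we wish by taking $\delta$ small. By sub-additivity~\eqref{cap.subadd}, $\cp(\cR_n)\le\cp(\cR_\tau)+\cp(\cR[\tau,n])$, and by the Markov property and translation invariance $\cp(\cR[\tau,n])$ is independent of $\mathcal F_\tau\supseteq\sigma(F)$ and has the law of $\cp(\cR_{n-\tau})$. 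Applying Proposition~\ref{cor:upward} to $\cR_{n-\tau}$ with deviation $\zeta/4$ then gives
\[
\bP\!\Big(F,\ \cp(\cR[\tau,n])\le\bE[\cp(\cR_{n-\tau})]+\tfrac\zeta4\Big)\ \ge\ \bP(F)\big(1-\exp(-c_d\,\zeta^2/(16(n-\tau)))\big)\ \ge\ \tfrac12\,\bP(F),
\]
the last inequality holding as soon as $\zeta^2/n$ exceeds a dimensional constant, which is ensured here by $\zeta\ge Kn^{(d-2)/d}\gg\sqrt n$; in dimension~$6$ one uses the variant of Proposition~\ref{cor:upward} carrying a $(\log\log n)^2$, and this is exactly what makes the slightly larger threshold $n^{(d-2)/d}(\log\log n)^2$ comfortable there.

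To conclude, on the event in the last display I would combine $\cp(\cR_\tau)\le a\zeta$, the bound on $\cp(\cR[\tau,n])$, and the estimate $\bE[\cp(\cR_n)]-\bE[\cp(\cR_{n-\tau})]=\gamma_d\tau+\mathcal O(\log n)$ from Lemma~\ref{lem.exp.cap} (with error $\mathcal O(1)$ when $d\ge 7$) to obtain
\[
\cp(\cR_n)-\bE[\cp(\cR_n)]\ \le\ \big(a+\tfrac14-\gamma_d c\big)\zeta+\mathcal O(\log n).
\]
Choosing first $c$ so large that $\gamma_d c\ge 3$ and then $\delta$ so small that $a\le\tfrac14$, the right-hand side is at most $-2\zeta+\mathcal O(\log n)\le-\zeta$ for $n$ large (using $\zeta\ge Kn^{(d-2)/d}\gg\log n$). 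Hence $\bP(\cp(\cR_n)-\bE[\cp(\cR_n)]\le-\zeta)\ge\tfrac12\,\bP(F)\ge\exp(-\underline\kappa\,\zeta^{1-2/(d-2)})$ for a suitable $\underline\kappa$ and all $n$ large; enlarging $\underline\kappa$ also absorbs the finitely many small values of $n$ (for which the range of $\zeta$ is empty or handled by adjusting constants), and $1-2/(d-2)=\tfrac12$ in dimension~$6$. The only points that need care are purely technical: (i) recording the box-confinement estimate with the clean exponent $\kappa\,\tau/R^2$, and (ii) verifying that the errors from Lemma~\ref{lem.exp.cap} and Proposition~\ref{cor:upward} stay negligible over the whole stated range of $\zeta$ — it is (ii) that dictates the lower threshold on $\zeta$, and the $\log\log n$ present in dimension~$6$ is why that case needs the extra factor. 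I do not expect any deeper obstacle; the genuine content is simply to balance $\tau$ against $R$ so that $\cp(\cR_\tau)$ drops well below $\gamma_d\tau$ while the confinement probability stays of the announced order $\exp(-\Theta(\zeta^{1-2/(d-2)}))$.
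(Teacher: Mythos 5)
Your proof is correct and follows essentially the same route as the paper's: cut off an initial segment of length of order $\zeta$, confine it in a ball of radius of order $\zeta^{1/(d-2)}$ at cost $\exp(-O(\zeta^{1-2/(d-2)}))$, and control the remaining piece via sub-additivity \eqref{cap.subadd}, Lemma~\ref{lem.exp.cap} and the upward bound of Proposition~\ref{cor:upward}. The only (harmless) cosmetic difference is that you exploit the independence of $\cR[\tau,n]-S_\tau$ from $\mathcal F_\tau$ to multiply probabilities, whereas the paper simply subtracts the upward-deviation probability of the second piece.
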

\begin{proof}
We prove the result for $d\ge 7$ to keep notation simple, but the same argument works as well for $d=6$. 
Set $\ell : = \lfloor 5\zeta/\gamma_d \rfloor$. 
Using \eqref{cap.subadd}, Lemma \ref{lem.exp.cap}, and Proposition \ref{cor:upward},  
we obtain that for some constant $c>0$,
\begin{align*}
\bP\left(\ccc{\RR_n}\le - \zeta\right) & \ge \bP\left(\ccc{\RR_\ell} \le -3\zeta\right) 
- \bP\left(\ccc{\RR[\ell,n]} \ge \zeta\right)\\
& \ge \bP\left(\cc{\RR_\ell}\le \zeta\right)  - \exp(-c \cdot \frac{\zeta^2}{n}),
\end{align*}
at least provided $\zeta$ is large enough, which one can always assume. 
Now the hypothesis on $\zeta$ implies that the last term is negligible, provided $K$ is chosen large enough, and 
by the same argument as in the proof of Proposition \ref{prop.lower5}, 
one can see that the first term on the right-hand side is of the right order (which is of the order of the event $F$ where the walk stays confined in a ball of radius $c'\zeta^{1/(d-2)}$, with $c'>0$ small enough,  during the whole time $\ell$). This concludes the proof of the proposition. 
\end{proof}


\section{The Gaussian regime} \label{sec.Gaussian}
The starting point to proving Theorem \ref{theo.Gaussian} is a
standard dyadic decomposition which follows from using \eqref{decomp-1}
repeatedly along a dyadic scheme. For any $L\ge 1$, and $n\ge 2^L$,
\begin{equation}\label{UBd5-1}
\cp(\cR_n)-\bE[\cp(\cR_n)]=  \sum_{i=1}^{2^L} \left( \cp(\cR_i^L) - \bE[ \cp(\cR_i^L)] \right)
-\sum_{\ell=1}^{L}\sum_{i=1}^{2^{\ell-1}} Y_i^\ell,
\end{equation}
where $Y_i^\ell:= \chi_\C(\cR_{2i-1}^\ell, \cR_{2i}^\ell ) -
\bE[\chi_\C(\cR_{2i-1}^\ell, \cR_{2i}^\ell )]$,  
and the $\{\cR_i^\ell\}_{i=1,\dots,2^\ell}$, are independent
ranges of length $n2^{-\ell}$ (the time-length is not
exactly equal for each of them since we do not suppose that $n$ 
is of the form $n=2^K$, for some $K\ge 1$, but they
differ by at most one unit).

A gaussian-type fluctuation is due to the sum of the $2^L$ self-similar
terms in \reff{UBd5-1}, after $L$ is chosen appropriatly. It is classical
(see \cite{Chen})
to use G\"artner-Ellis' Theorem after we show that the contribution
of the $Y_i^\ell$ is negligible. 
Thus, the main technical novelty of this section is the
stretched exponential moment bound \reff{stretch-chi}, which is performed
in Section \ref{subsec.2.gauss}. 

After recalling some well-known results
in Section \ref{subsec.1.gauss},
we conclude the proof of Theorem 
\ref{theo.Gaussian} in Section \ref{subsec.3.gauss}.

\subsection{Preliminary results}\label{subsec.1.gauss}
We first state an instance 
of G\"artner-Ellis' Theorem (see Theorem 2.3.6 in \cite{DZ}). 
\begin{theorem}[G\"artner-Ellis]\label{theo-GE}
Let $\{X_n\}_{n\ge 0}$ be a sequence of real random variables.
Let $\{b_n\}_{n\ge 0}$ going to infinity, and
for any $\theta\in \R$, 
\begin{equation*}
\text{If $\forall\theta\in \R$}\quad
\lim_{n\to\infty} \frac{1}{b_n} \log \bE[\exp(\theta b_n\cdot X_n)]=
\frac{\sigma^2}{2}\cdot \theta^2,\quad 
\text{then, $\forall\lambda>0$,}\quad
\lim_{n\to\infty} \frac{1}{b_n}
\log \bP(X_n> \lambda)=-\frac{\lambda^2}{2\sigma^2}.
\end{equation*}
\end{theorem}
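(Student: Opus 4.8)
The statement is the one-dimensional instance of the G\"artner--Ellis theorem in the case where the scaled cumulant generating function is the quadratic $\Lambda(\theta):=\frac{\sigma^2}{2}\theta^2$; its convex conjugate is $\Lambda^*(\lambda)=\lambda^2/(2\sigma^2)$, and the plan is to reproduce the two halves of the usual G\"artner--Ellis argument for this particular $\Lambda$. Since $\Lambda$ is finite and smooth on all of $\R$, no steepness or essential-smoothness subtleties intervene, and one may equally well just invoke Theorem~2.3.6 in \cite{DZ} directly; the sketch below indicates how the proof goes in this concrete setting.

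\textbf{Upper bound.} Fix $\lambda>0$ and $\theta>0$. Since $b_n\to\infty$, for $n$ large one has $b_n>0$, and the exponential Chebyshev (Chernoff) inequality gives $\bP(X_n>\lambda)\le e^{-\theta b_n\lambda}\,\bE[e^{\theta b_n X_n}]$. Taking $\frac1{b_n}\log$ and letting $n\to\infty$ with the hypothesis yields $\limsup_n\frac1{b_n}\log\bP(X_n>\lambda)\le-\theta\lambda+\frac{\sigma^2}{2}\theta^2$. Minimising the right-hand side over $\theta>0$ (the minimiser is $\theta=\lambda/\sigma^2>0$, which is admissible) gives $\limsup_n\frac1{b_n}\log\bP(X_n>\lambda)\le-\lambda^2/(2\sigma^2)$.

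\textbf{Lower bound via a change of measure.} Fix $\lambda'>\lambda$ and set $\eta:=\lambda'/\sigma^2>0$. Define the tilted laws $\til\bP_n$ by $d\til\bP_n/d\bP=e^{\eta b_n X_n}/\bE[e^{\eta b_n X_n}]$. For every $t\in\R$ the hypothesis gives $\frac1{b_n}\log\til\bE_n[e^{t b_n X_n}]\to\Lambda(\eta+t)-\Lambda(\eta)=\sigma^2\eta t+\frac{\sigma^2}{2}t^2$, which is the scaled cumulant generating function of a point mass at $\sigma^2\eta=\lambda'$; applying Chebyshev's inequality in both directions then forces $\til\bP_n(|X_n-\lambda'|>\delta)\to0$ for every $\delta>0$. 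Choosing $\delta\in(0,\lambda'-\lambda)$ and undoing the tilt on the event $\{|X_n-\lambda'|<\delta\}$ (where $e^{-\eta b_n X_n}\ge e^{-\eta b_n(\lambda'+\delta)}$), we get
\[
\bP(X_n>\lambda)\ \ge\ \bP\big(|X_n-\lambda'|<\delta\big)\ \ge\ e^{-\eta b_n(\lambda'+\delta)}\,\bE[e^{\eta b_n X_n}]\,\til\bP_n\big(|X_n-\lambda'|<\delta\big).
\]
Taking $\frac1{b_n}\log$ and letting $n\to\infty$, the factor $\bE[e^{\eta b_n X_n}]$ contributes $\Lambda(\eta)$ and the tilted probability contributes $0$, so $\liminf_n\frac1{b_n}\log\bP(X_n>\lambda)\ge-\eta(\lambda'+\delta)+\Lambda(\eta)=-\Lambda^*(\lambda')-\eta\delta$. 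Letting $\delta\downarrow0$ and then $\lambda'\downarrow\lambda$, and using continuity of $\Lambda^*(\mu)=\mu^2/(2\sigma^2)$, gives $\liminf_n\frac1{b_n}\log\bP(X_n>\lambda)\ge-\lambda^2/(2\sigma^2)$. Combined with the upper bound this proves the claimed limit.

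\textbf{Main obstacle.} The delicate point is the lower bound: one must perform the change of measure and verify that $X_n$ concentrates at $\lambda'$ under $\til\bP_n$ using only the assumed convergence of the log-moment generating functions, with no a priori tightness of $X_n$. This is precisely where the general G\"artner--Ellis theorem appeals to essential smoothness of $\Lambda$; here $\Lambda$ is the everywhere-finite, smooth quadratic, so the tilted cumulant generating function converges on all of $\R$ and the two-sided Chebyshev concentration above suffices. The upper bound is entirely routine.
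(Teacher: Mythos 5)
The paper does not prove this statement at all: it is quoted as a known special case of the G\"artner--Ellis theorem, with a pointer to Theorem~2.3.6 of Dembo--Zeitouni, and is then used as a black box in Section~\ref{subsec.3.gauss}. Your self-contained argument is correct and is essentially the standard proof of that cited theorem specialised to the quadratic $\Lambda(\theta)=\frac{\sigma^2}{2}\theta^2$: the Chernoff bound optimised at $\theta=\lambda/\sigma^2$ gives the upper bound, and the exponential tilt at $\eta=\lambda'/\sigma^2$ gives the lower bound. The one step worth spelling out slightly more is the concentration of $X_n$ under $\til\bP_n$: the tilted scaled cumulant generating function $t\mapsto \lambda' t+\frac{\sigma^2}{2}t^2$ is not that of a point mass, but a Chernoff bound under $\til\bP_n$ with small $|t|$ gives $\limsup_n\frac1{b_n}\log\til\bP_n(\pm(X_n-\lambda')\ge\delta)\le -t\delta+\frac{\sigma^2}{2}t^2<0$, which is what you need; your parenthetical ``Chebyshev in both directions'' is doing exactly this. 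You also correctly identify why no essential-smoothness or steepness hypotheses are needed here: $\Lambda$ is finite and differentiable on all of $\R$ with $\Lambda'$ surjective, so every exposed point is attained and the tilt parameter is always admissible. In short, your proof is a valid, more explicit substitute for the paper's citation.
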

We recall now a large deviation estimates  for variables 
with stretched exponential moment.  
\begin{theorem}[A. Nagaev \cite{anagaev}]\label{theo-nagaev}
Let $\{Y_n\}_{n\ge 0}$ be a sequence of centered random variables, 
such that 
$\bE[\exp(\kappa |Y_1|^\alpha)]<\infty$, for some constants $\kappa>0$, and $\alpha\in (0,1]$. Then 
there are positive constants $c$ and $C$, such that for any $n\ge 1$ and any $t>n^{\frac 1{2-\alpha}}$, 
\begin{equation*}
\bP\big(Y_1+\dots+Y_n>t\big)\le C\exp(-c t^\alpha). 
\end{equation*}
\end{theorem}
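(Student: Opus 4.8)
The plan is to use the classical ``one big jump'' mechanism for subexponential summands together with a truncated Chernoff bound; I assume, as is implicit, that the $Y_i$ are i.i.d., and I treat $0<\alpha<1$ (the case $\alpha=1$ gives a finite exponential moment near the origin and is covered by the standard Cram\'er/Bernstein bound). Write $S_n=Y_1+\dots+Y_n$, $M_n=\max_{i\le n}Y_i$, $\bar F(y)=\bP(Y_1>y)$, and fix any $\kappa'\in(0,\kappa)$. For every threshold $y>0$,
\[
\bP(S_n\ge t)\ \le\ \bP(M_n>y)\ +\ \bP\big(\widetilde S_n\ge t\big),\qquad \widetilde S_n:=\sum_{i=1}^n Y_i\,\ind\{Y_i\le y\},
\]
since $S_n=\widetilde S_n$ on $\{M_n\le y\}$. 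The first term is handled by a union bound and Markov's inequality applied to $e^{\kappa'|Y_1|^\alpha}$, giving $\bP(M_n>y)\le n\bar F(y)\le n\,\bE[e^{\kappa'|Y_1|^\alpha}]\,e^{-\kappa' y^\alpha}$.

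The heart of the argument is the truncated exponential-moment estimate. Set $c_0:=\tfrac12\big(\bE[Y_1^2]+\bE[Y_1^2e^{\kappa'|Y_1|^\alpha}]\big)$, which is finite because $s^2e^{(\kappa'-\kappa)s^\alpha}$ is bounded on $\R_+$ (so $Y_1^2 e^{\kappa'|Y_1|^\alpha}\le (\text{const})\,e^{\kappa|Y_1|^\alpha}$). I claim that for all $\lambda,y>0$ with $\lambda\,y^{1-\alpha}\le\kappa'$,
\[
\bE\!\big[\exp(\lambda Y_1\ind\{Y_1\le y\})\big]\ \le\ \exp(c_0\lambda^2).
\]
This is a routine Taylor estimate: split on $\{Y_1\le 0\}$, $\{0<Y_1\le y\}$ and $\{Y_1>y\}$; on the first use $e^x\le 1+x+x^2/2$; on the second use $e^x-1-x\le\tfrac12 x^2e^x$ together with the pointwise bound $e^{\lambda s}\le e^{\kappa' s^\alpha}$, valid for $0\le s\le y$ precisely because $\lambda y^{1-\alpha}\le\kappa'$; on the third the indicator vanishes. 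Recombining the three pieces (the total mass sums to $1$) and using $\bE[Y_1\ind\{Y_1\le y\}]=-\bE[Y_1\ind\{Y_1>y\}]\le 0$ by centering leaves exactly $1+c_0\lambda^2\le e^{c_0\lambda^2}$. By independence this gives $\bP(\widetilde S_n\ge t)\le\exp(-\lambda t+nc_0\lambda^2)$ for every admissible pair $(\lambda,y)$.

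It then remains to choose $\lambda:=\min\big(\kappa'\,t^{\alpha-1},\ \tfrac{t}{2nc_0}\big)$ and $y:=(\kappa'/\lambda)^{1/(1-\alpha)}$, so that the admissibility $\lambda y^{1-\alpha}=\kappa'$ holds and, crucially, $y\ge t$. Then $\bP(M_n>y)\le n\,\bE[e^{\kappa'|Y_1|^\alpha}]\,e^{-\kappa' t^\alpha}$, which is $\le Ce^{-ct^\alpha}$ because the prefactor $n<t^{2-\alpha}$ is only polynomial. For the truncated sum a short case split finishes it: if the binding value is $\lambda=t/(2nc_0)$ then $-\lambda t+nc_0\lambda^2=-t^2/(4nc_0)\le -t^\alpha/(4c_0)$ since $t^{2-\alpha}>n$ by hypothesis; if the binding value is $\lambda=\kappa' t^{\alpha-1}$ then $n\le t^{2-\alpha}/(2c_0\kappa')$ and one computes $-\lambda t+nc_0\lambda^2\le-\tfrac12\kappa' t^\alpha$. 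Bounded values of $t$, hence of $n$, are absorbed by enlarging $C$.

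The step I expect to be delicate is exactly this final coupling of parameters: the union bound forces $y\gtrsim t$ (because of the polynomial factor $n<t^{2-\alpha}$), the truncated-moment lemma forces $\lambda y^{1-\alpha}\le\kappa'$ and hence $\lambda\lesssim t^{\alpha-1}$, while beating the ``Gaussian cost'' $nc_0\lambda^2$ forces $\lambda\lesssim t/n$; these three constraints are simultaneously satisfiable only because $\alpha<1$, and at the edge of the regime they hold precisely by virtue of the hypothesis $t>n^{1/(2-\alpha)}$, i.e. $t^2/n>t^\alpha$. A naive one-level truncation at a fixed or $t$-proportional level, with a crude Bernstein bound, fails exactly here, which is why the choice $y=(\kappa'/\lambda)^{1/(1-\alpha)}$ is essential.
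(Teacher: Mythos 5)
Your proof is correct, but note that the paper does not actually prove this statement: it is quoted as Theorem~\ref{theo-nagaev} with an attribution to A.~Nagaev's 1969 paper and used as a black box in Section~\ref{subsec.3.gauss}, so there is no ``paper's proof'' to compare against. What you have written is a clean, self-contained derivation in the spirit of the Fuk--Nagaev truncation method, and I verified the key steps: the truncated Laplace-transform estimate
$\bE[\exp(\lambda Y_1\ind\{Y_1\le y\})]\le\exp(c_0\lambda^2)$ for $\lambda y^{1-\alpha}\le\kappa'$ goes through exactly as you describe (the inequality $e^x-1-x\le\frac{x^2}{2}e^x$ for $x\ge 0$, the pointwise domination $e^{\lambda s}\le e^{\kappa' s^\alpha}$ on $[0,y]$, and the centering to kill the linear term), the finiteness of $c_0$ is correct because $\kappa'<\kappa$ makes $s^2e^{(\kappa'-\kappa)s^\alpha}$ bounded, and the coupling $\lambda=\min(\kappa' t^{\alpha-1},t/(2nc_0))$, $y=(\kappa'/\lambda)^{1/(1-\alpha)}$ simultaneously guarantees $y\ge t$ (so the maximum bound gives $\kappa' t^\alpha$), admissibility $\lambda y^{1-\alpha}=\kappa'$, and the claimed exponents in both branches of the case split, using $t^{2-\alpha}>n$ in exactly the way you flag. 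Two small remarks: the separate treatment of $\alpha=1$ is indeed necessary (your choice of $y$ degenerates there), and treating the $Y_i$ as i.i.d.\ is the intended reading, since this is precisely how the theorem is invoked in the proof of Theorem~\ref{theo.Gaussian}, where $Y_i^\ell$ for fixed $\ell$ are i.i.d.
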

\subsection{Stretched exponential moment of the cross term}\label{subsec.2.gauss}
The heart of the proof of Theorem \ref{theo.Gaussian} 
use Theorem \ref{theo.stretched.exp} below which is more general
than  \reff{stretch-chi}, and has interest of its own.
It is analogous to the arguments of \cite{AS20c}. 

Define for any subsets $A,B\subseteq \Z^d$, 
$$\Gamma(A,B) = \sum_{x\in A} \sum_{y\in B} G(y-x) \bP_y(H_B^+ =\infty). $$ 
Recall that $0\le \chi_\C(A,B) \le 2 \Gamma(A,B)$, for any $A,B\subseteq \Z^d$.  

\begin{theorem}\label{theo.stretched.exp}
Let $\cR_\infty$ and $\widetilde \cR_\infty$ be the ranges of two independent random walks on $\Z^d$, with $d\ge 7$. There exist positive constants $c_1,c_2$, such that 
for all $t$ large enough, 
$$\exp(-c_1t^{1-\frac 2{d-2}} ) \le \bP(\Gamma(\widetilde \cR_\infty, \cR_\infty ) > t) \le \exp(- c_2 t^{1-\frac 2{d-2}}). $$ 
\end{theorem}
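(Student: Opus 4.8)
The plan is to prove the two bounds separately, since they are of very different natures. For the lower bound, the strategy is the same confinement event used in Proposition~\ref{prop.lower5} and Proposition~\ref{prop.lower6}: if both walks $S$ and $\widetilde S$ stay inside a ball of radius $R \asymp t^{1/(d-2)}$ up to a time of order $R^{d-2} \asymp t$, then both ranges have capacity of order $R^{d-2}\asymp t$, they both sit at mutual distance $\le 2R$ from each other, and the escape probabilities $\bP_y(H_B^+=\infty)$ summed over $B$ reconstitute $\cp(B)\asymp t$. Using $G(y-x)\ge c/R^{d-2}$ on this event together with \eqref{cap.def2}, one gets $\Gamma(\widetilde\cR_\infty,\cR_\infty)\ge c\,\cp(\cR_R)\cdot \cp(\widetilde\cR_R)/R^{d-2}\ge c' t$. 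The probability that a single walk stays in a ball of radius $R$ up to time $R^{d-2}$ is $\exp(-\Theta(R^{d-4}))=\exp(-\Theta(t^{1-2/(d-2)}))$ by standard small-ball estimates, and taking the product over the two independent walks gives the claimed $\exp(-c_1 t^{1-2/(d-2)})$.

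For the upper bound, the main work is the tail estimate, and this is where I expect the real difficulty. Fix a realization of $\widetilde\cR_\infty$ and condition on it; then $\Gamma(\widetilde\cR_\infty,\cR_\infty)=\sum_{x\in \widetilde\cR_\infty}\sum_{y\in\cR_\infty}G(y-x)\bP_y(H^+_{\cR_\infty}=\infty)$ is, after summing the inner Green's function against the point $x$, essentially $\sum_{x\in\widetilde\cR_\infty}\sum_{y\in\cR_\infty}G(y-x)\bP_y(H^+_{\cR_\infty}=\infty)$. The key observation, exactly as in the corrector analysis of Section~\ref{sec-UB}, is that the event $\{\Gamma>t\}$ forces the walk $S$ to spend an anomalously long time near $\widetilde\cR_\infty$, and more precisely to create a region where its local time density is large. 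I would introduce, as in \eqref{def-K} and Theorem~\ref{lem-AS}, the sets $\K_n(r,\rho)$ and the multi-scale decomposition $\widehat\K_i$ (now with $n=\infty$, which is harmless since the walk is transient), and show — using Lemma~\ref{lem-HD}, Lemma~\ref{lem-asympt} (or rather its $T=\infty$ analogue, which is just the bound $G\star G(z)\le C/(1+\|z\|^{d-4})$), and Lemma~\ref{lem:rear} — that on the event $\bigcap_i\{|\widehat\K_i|\le A L_i\}$ with $L_i\asymp t\cdot 2^{2i/(d-2)}$ and $\overline\rho\asymp t^{-2/(d-2)}$, one has $\Gamma(\widetilde\cR_\infty,\cR_\infty)\le t$. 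This is the analogue of Proposition~\ref{prop.Anxin7}, and its proof should be a simplification of that one since there is no time-truncation parameter $T$ to track.

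Once that deterministic inclusion is in place, the upper bound follows by applying the first part of Theorem~\ref{lem-AS} to each scale: $\bP(|\K_\infty(r_i,\rho_i)|\ge L_i)\le C_0\exp(-\kappa\,\rho_i^{2/d}L_i^{1-2/d})$, and one checks that $\rho_i^{2/d}L_i^{1-2/d}\asymp t^{1-2/(d-2)}$ at every scale $i$, with the sum over the (logarithmically many) scales only costing a constant factor in the exponent. One also has to control the number of scales: $i$ ranges over roughly $\log_2(1/\overline\rho)\asymp \log t$ values, which is absorbed. Here I should be slightly careful that, unlike in Section~\ref{sec-UB}, the relevant trajectory is a single infinite walk rather than a walk run up to time $n$, so the condition $\rho r^{d-2}\ge C_0\log n$ in \eqref{hyp.r} has to be replaced by something that does not reference $n$; in practice one truncates the walk at a polynomial time $n=t^{C}$ beyond which the contribution of the tail is negligible by transience, and applies Theorem~\ref{lem-AS} at that scale.

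The step I expect to be the main obstacle is establishing the deterministic inclusion $\bigcap_i\{|\widehat\K_i|\le A L_i\}\subseteq\{\Gamma\le t\}$ with the correct constants, because it requires simultaneously handling the $x$-sum over $\widetilde\cR_\infty$ (for which one wants $\widetilde\cR_\infty$ itself to not be too dense — but by symmetry of $\Gamma$ up to the escape factor, or by first conditioning on $S$ and bounding $\bP_y(H^+_{\cR_\infty}=\infty)\le 1$, one reduces to a one-sided density condition) and the bookkeeping of which concentric shells around each visited point contribute how much. The cleanest route is probably: condition on $S$, bound the escape probabilities by $1$, so that $\Gamma(\widetilde\cR_\infty,\cR_\infty)\le \sum_{y\in\cR_\infty}\sum_{x\in\widetilde\cR_\infty}G(y-x)\le \sum_{y\in\cR_\infty}\bE[\#\{\text{visits of }\widetilde S\text{ to }\cdot\}\text{ weighted by }G]$, take expectation over $\widetilde S$ of the inner sum to get $\sum_{y\in\cR_\infty}(G\star G)(\text{shift})$, no — rather, one keeps $\widetilde\cR_\infty$ and notes $\sum_{x\in\widetilde\cR_\infty}G(y-x)$ is controlled on the event that $\widetilde S$ has bounded local-time density by Lemma~\ref{lem:rear}, which is exactly why that lemma is stated for arbitrary deterministic $S$. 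So the structure is: put a density constraint on \emph{both} walks, use Lemma~\ref{lem:rear} on $\widetilde S$ to bound the inner $x$-sum uniformly by $C\overline\rho^{1-2/d}|\widetilde\K|^{2/d}\lesssim t$, then use the multi-scale decomposition on $S$ exactly as in Proposition~\ref{prop.Anxin7}, and take a union bound over the bad-density events for the two independent walks.
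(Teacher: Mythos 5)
Your lower bound is exactly the paper's: monotonicity of $\Gamma$ in both arguments reduces to finite time horizons, and the joint confinement strategy of Proposition \ref{prop.lower5} with $R\asymp t^{1/(d-2)}$ and $\tau\asymp t$ gives cost $\exp(-\Theta(t^{1-2/(d-2)}))$; the numerology checks out.

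The upper bound has a genuine gap, and it sits exactly where you located the difficulty. The deterministic inclusion you aim for --- separate (even multi-scale) density constraints on each of the two walks imply $\Gamma\le t$ --- is false. By the last-exit decomposition one has $\Gamma(A,B)=\sum_{x\in A}\bP_x(H_B<\infty)$; now take for $S$ any typical path and for $\widetilde S$ a path running through the same tube at distance $O(1)$: each path separately satisfies every density constraint at every scale, yet $\Gamma(\widetilde\cR_n,\cR_n)\ge c\,n$, which is far larger than $t$ because the truncation level $n$ must be taken of order $\exp(t^{1-2/(d-2)})$ (your polynomial truncation $n=t^C$ only yields a polynomial tail, since the available estimate is Markov's inequality applied to $\bE[\Gamma(\widetilde\cR_\infty,\cR[n,\infty))]\le Cn^{-(d-6)/2}$). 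Relatedly, your ``uniform bound on the inner $x$-sum times multi-scale on $S$'' cannot close: the outer sum of escape probabilities over $y\in\cR_n$ is $\cp(\cR_n)\asymp n$, so any bound that decouples into a supremum over $y$ times the outer sum is off by a factor of $n$. The missing idea is the paper's conditional exponential-moment step: conditionally on $\cR_n=\Lambda$, subadditivity of $\Gamma(\cdot,\Lambda)$ in its first argument together with the Markov property shows that $\Gamma(\widetilde\cR_\infty,\Lambda)$ is stochastically dominated by a geometric sum with mean controlled by $\sup_x\bE_x[\Gamma(\widetilde\cR_\infty,\Lambda)]\le \sup_x\mathcal F(\Lambda-x)$, where $\mathcal F(\Lambda)=\sum_{z\in\Lambda}G\star G(z)\,\bP_z(H^+_\Lambda=\infty)$; this handles the second walk entirely probabilistically and is precisely what rules out the parallel-tube scenario. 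Only then does the multi-scale machinery (Lemmas \ref{lem-HD}, \ref{lem-asympt}, \ref{lem:rear} and Theorem \ref{lem-AS}) enter, applied to the single walk $S$ to prove $\sup_x\mathcal F(\cR_n-x)\le Ct^{2/(d-2)}$ outside an event of probability $\exp(-ct^{1-2/(d-2)})$, which plugged into the geometric bound yields $\exp(-c\,t/t^{2/(d-2)})$ as required. Note finally that the escape probabilities inside $\mathcal F$ must be retained (see the remark following the theorem in the paper): they are what allows Lemma \ref{lem-HD} to convert the near-field shell sums into capacities of cubes.
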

Let us notice that in the definition of $\Gamma$ it is fundamental to keep the escape probabilities, in other words one cannot simply bound them by one. Indeed one could show that the tail distribution of  
$\Gamma'(\cR_\infty,\widetilde \cR_\infty):= \sum_{x\in \cR_\infty} \sum_{y\in \widetilde \cR_\infty} G(x-y)$ obeys a different decay at infinity.  

\begin{proof}[Proof of Theorem \ref{theo.stretched.exp}] 
We start with the lower bound. Observe that $\Gamma(\cdot,\cdot)$ is increasing in both arguments for the inclusion of sets, thus for any $n\ge 1$, 
$$\Gamma(\widetilde \cR_\infty, \cR_\infty ) \ge \Gamma (\widetilde \cR_n, \cR_n).$$
Therefore the lower bound is obtained by forcing the two walks to stay confined in a ball of radius $t^{\frac{1}{d-2}}$ for a time $Ct$, with $C>0$ large enough, exactly as in the proof of Proposition \ref{prop.lower5}.

We now move to the upper bound. The proof is obtained in three steps. 
In the first step, we reduce the time window of one walk to a finite interval, as follows. Observe that for any integer $n\ge 1$, 
\begin{align*}
& \bE[\Gamma(\widetilde \cR_\infty, \cR[n,\infty))]  \le \bE\left[\sum_{k=0}^\infty \sum_{\ell = n}^\infty G(S_k-\widetilde S_\ell)\right]= \sum_{k=0}^\infty \sum_{\ell = n}^\infty\bE[G(S_{k+\ell})] \\
& = \sum_{k=n}^\infty (k+1-n)\bE[G(S_k)]\le C\sum_{k=n}^\infty\frac{k+1-n}{k^{\frac{d-2}{2}}} \le \frac{C}{n^{\frac{d-6}{2}}},  
\end{align*}
for some constant $C>0$. Therefore if we let $n :=\exp(t^{1-\frac{2}{d-2}})$, then by Markov's inequality, 
$$\bP(\Gamma(\widetilde \cR_\infty, \cR[n,\infty))\ge 1) \le \bE[\Gamma(\widetilde \cR_\infty, \cR[n,\infty))] \le C\exp(- \frac{d-6}{2}\cdot t^{1-\frac{2}{d-2}}), $$ 
and thus, due to the inequality 
$$\Gamma(\widetilde \cR_\infty, \cR_\infty) \le \Gamma (\widetilde \cR_\infty, \cR_n) + \Gamma (\widetilde \cR_\infty, \cR[n,\infty)), $$
it just remains to bound the first term on the right-hand side.

In a second step we claim that for any subset $\Lambda\subseteq \Z^d$, and any $t\ge 1$, 
\begin{equation}\label{Claim.Gamma}
\bP(\Gamma(\widetilde \cR_\infty, \Lambda) > t ) \le  \exp\left(-\frac{t\cdot \log 2}{2\sup_{x\in \Z^d} \bE_x[\Gamma(\widetilde \cR_\infty, \Lambda)]}\right). 
\end{equation} 

To see this, we use again that for any $A,B\subseteq \Z^d$, one has 
$\Gamma(A\cup B,\Lambda) \le \Gamma(A,\Lambda) + \Gamma(B,\Lambda)$. Thus the Markov property and Markov's inequality show that the random variable 
$\frac{\Gamma(\widetilde \cR_\infty, \Lambda)}{2\sup_{x\in \Z^d} \bE_x[\Gamma(\widetilde \cR_\infty, \Lambda)]}$ is stochastically bounded by a Geometric random variable with mean $2$, from which \eqref{Claim.Gamma} follows immediately. Note also that for any $x$, 
$$ \bE_x[\Gamma(\widetilde \cR_\infty, \Lambda)] \le  \sum_{z\in \Lambda} G\star G(z-x)\cdot  \bP_z(H_{\Lambda}^+ = \infty) =:\mathcal F(\Lambda - x),$$
where we recall that $G\star G$ is the convolution of $G$ with itself, and 
$$\mathcal F(\Lambda) := \sum_{z\in \Lambda} G\star G(z) \cdot \bP_z(H_\Lambda^+=\infty). $$ 
Thus it amounts to show that for some positive constants $c$ and $C$, one has 
\begin{equation}\label{Claim.2.F}
\bP\left(\sup_{x\in \Z^d} \mathcal F(\cR_n-x) > Ct^{\frac 2{d-2}}\right) \le C\exp(-ct^{1-\frac 2{d-2}}), \quad \text{with }n=\exp(t^{1-\frac{2}{d-2}}), 
\end{equation} 
which is our third and last step. Note that $\mathcal F$ is also subadditive in the sense that for any $A,B\subseteq \Z^d$, 
$\mathcal F(A\cup B) \le \mathcal F(A) + \mathcal F(B)$. This allows to partition the range into different pieces, according to the occupation density 
in a certain neighborhood, and then bound $\mathcal F$ on each of them. 
To be more precise, set $\rho_0 := t^{-\frac 2{d-2}}$, and then for $i\ge 0$, define $\rho_i$ and $r_i$ by  
$$\rho_i := 2^{-i}\rho_0,\quad \text{and} \quad \rho_i r_i^{d-2}= C_0 \log n,$$
with $C_0$ as in \eqref{hyp.r}. 
Then let $\cR_n(r_i,\rho_i):= \{S_k,\ k\in \K_n(r_i,\rho_i)\}$, and 
$$
\Lambda_i := \cR_n(r_i,\rho_i) \bs 
\bigcup_{0\le j< i} \cR_n(r_j,\rho_j) ,\qquad  \Lambda_i^*:
= \cR_n \bs  \bigcup_{0\le j<  i}\Lambda_i. 
$$  
By Theorem \ref{lem-AS}, one has for any $i\ge 0$,  
$$\bP(|\Lambda_i|\ge 2^{\frac{2i}{d-2}} t ) \le C\exp(-\kappa t^{1-\frac 2{d-2}}),$$
for some positive constants $C$ and $\kappa$, and in fact for $i>\frac{d-2}{2} \log_2(n+1)$, the above probability is zero, since by definition $|\Lambda_i|\le n+1$. 
Therefore, if we let 
$$\mathcal E := \left\{|\Lambda_i|\le 2^{\frac{2i}{d-2}} t, \ \text{for all }i\ge 0\right\},$$
then the above discussion shows that 
$$\bP(\mathcal E^c) \le C\exp(-(\kappa/2)\cdot t^{1-\frac 2{d-2}}),$$
at least for $t$ large enough. We now show that for some constant $C>0$, 
\begin{equation}\label{claim.E.F}
\mathcal E \subseteq \left\{\sup_{x\in \Z^d} \mathcal F(\cR_n-x) \le Ct^{\frac 2{d-2}}\right\},
\end{equation}
which will conclude the proof of the theorem. To simplify notation we only bound $\mathcal F(\cR_n-x)$ for $x=0$, but it should be clear from the proof that all 
our estimates are uniform with respect to $x$. We partition space into shells $(\mathcal S_k)_{k\ge 0}$, defined by $\mathcal S_0 := Q(0,r_0)$, and $\mathcal S_k:= Q(0,r_k)\bs Q(0,r_{k-1})$ for $k\ge 1$. By subadditivity, one has 
$$\mathcal F(\cR_n) \le \sum_{k\ge 0} \mathcal F(\mathcal S_k \cap \cR_n). $$
The proof of Lemma \ref{lem-asympt} shows that $G\star G(z) \le C \|z\|^{4- d}$, and thus Lemma \ref{lem-HD} gives 
$$\mathcal F(\mathcal S_0\cap \cR_n) \le  \mathcal F(\mathcal S_0) \le Cr_0^2 \le Ct^{\frac{2}{d-2}}. $$ 
Then for $k\ge 1$, we write 
$$\mathcal F(\mathcal S_k\cap \cR_n) \le \sum_{i=0}^k \mathcal F(\mathcal S_k\cap \Lambda_i) + \mathcal F(\mathcal S_k\cap \Lambda_{k+1}^*).$$
On one hand one has on the event $\mathcal E$, 
$$\mathcal F(\Lambda_0 \cap \mathcal S_0^c) \le C\frac{|\Lambda_0|}{r_0^{d-4}} \le Ct^{\frac 2{d-2}}. $$
On the other hand, for any $i\ge 1$, 
\begin{align*}
 \sum_{k\ge  i } \mathcal F(\mathcal S_k\cap \Lambda_i)  \le \sum_{z\in \Lambda_i \cap Q(0,r_{i-1})^c} G\star G(z)\le \sum_{z\in \Lambda_i \cap Q(0,r_{i-1})^c} \frac{C}{1+\|z\|^{d-4}}   \le C\rho_i^{1-\frac{4}{d}} |\Lambda_i|^{4/d}, 
\end{align*}
using the same argument as in the proof of Lemma \ref{lem:rear} for the last inequality.  Thus on the event $\mathcal E$, we get 
$$\sum_{k\ge  i } \mathcal F(\mathcal S_k\cap \Lambda_i)  \le C 2^{-i\frac{d-6}{d-2}} t^{\frac 2{d-2}}. $$ 
It follows that on $\mathcal E$, 
$$\sum_{i\ge 1} \sum_{k\ge i}  \mathcal F(\mathcal S_k\cap \Lambda_i)  \le C 2^{-i\frac{d-6}{d-2}} t^{\frac 2{d-2}} \le Ct^{\frac 2{d-2}} . $$ 
Similarly, one has 
$$\sum_{k\ge 1}  \mathcal F(\mathcal S_k\cap \Lambda_{k+1}^*) \le C\sum_{k\ge 1} \frac{\rho_k r_k^d}{r_{k-1}^{d-4}} \le C\frac{\log n}{r_0^{d-6}} \le Ct^{\frac 2{d-2}}. $$  
Altogether this proves \eqref{claim.E.F}, and concludes the proof of the theorem. 
\end{proof}

\subsection{Proof of Theorem \ref{theo.Gaussian}}\label{subsec.3.gauss} 

Let $\{\zeta_n\}_{n\ge 0}$ be a sequence as in the statement of Theorem \ref{theo.Gaussian}, and let $L$ be the integer such that 
$2^{L-1} \le \zeta_n  <2^L$. 

We first show that the cross terms appearing in 
\eqref{UBd5-1} are negligible. 
Applying Theorems \ref{theo-nagaev} and \ref{theo.stretched.exp}, we get that for any $\delta>0$, and any $\ell \le L$, 
$$\limsup_{n\to \infty} \frac{n}{\zeta_n^2}\cdot \log \bP\left(\pm \sum_{i=1}^{2^\ell} Y_i^\ell \ge \frac{\delta\zeta_n}{L} \right)  = - \infty. $$ 
By using a union bound we also deduce 
$$\limsup_{n\to \infty} \frac{n}{\zeta_n^2}\cdot \log \bP\left(\pm \sum_{\ell = 1}^L \sum_{i=1}^{2^\ell} Y_i^\ell \ge \delta\zeta_n \right)  = - \infty. $$ 
Thus indeed the cross terms in \eqref{UBd5-1} can be ignored, and we focus now on proving the Moderate Deviation Principle for the first sum.

For simplicity, let $Z_i:= |\cR_i^L| -\bE[ |\cR_i^L|]$. We apply Theorem \ref{theo-GE} with $X_n:= \frac{\pm 1}{\zeta_n}\sum_{i=1}^{2^L}Z_i$, and 
$b_n:=\zeta_n^2/n$. One has using independence, and the fact that $\frac{\zeta_n}{n}\cdot |Z_1|$ is bounded, 
\begin{equation*}
\bE[\exp(\theta b_n X_n]=
\Big(\bE[\exp(\theta \frac{\zeta_n}{n} Z_1] \Big)^{2^L}\\
= \Big(1+\frac{\theta^2}{2}\big(\frac{\zeta_n}{n}\big)^2\cdot
\bE[Z_1^2]+
\mathcal O\big(\big(\frac{\zeta_n}{n}\big)^3\cdot \bE[|Z_1|^3]\big) \Big)^{2^L}.
\end{equation*}
Note that $2^L\cdot \bE[Z_1^2]/n$ converges to $\sigma^2>0$, 
and that the fourth centered moment of $\cp(\cR_n)$ 
is $\mathcal O(n^2 (\log n)^2)$. This can be seen as for the volume of the range, following the same proof as in \cite{LG86}.
Thus, using that $\bE[|Z_1|^3]\le \bE[Z_1^4]^{3/4}$, we have
\[
\big(\frac{\zeta_n}{n}\big)^3\cdot \bE[|Z_1|^3]\le C
\big(\frac{\zeta_n\log n}{n}\big)^{3/2}.
\]
It follows that for any $\theta\in \R$,
\begin{equation*}
\lim_{n\to\infty} \frac{n}{\zeta_n^2} \log \bE[
\exp\Big(\theta\frac{\zeta_n}{n} X_n\Big)=
\frac{\sigma^2}{2} \theta^2, 
\end{equation*}
and one can then apply G\"artner--Ellis' Theorem, which concludes the proof of Theorem \ref{theo.Gaussian}. 


\section{Upward Deviations}\label{sec-HK}
We prove here Theorem~\ref{theo:upward}. 
Thanks to our decomposition \reff{decomp-1}, we can
adapt the approach of Hamana and Kesten \cite{HK}, 
who proved a similar result for the size of the range.

The approach of Hamana and Kesten is based on first proving an approximate subadditivity relation for the probability of upward deviations, that is the existence of some constants 
$\chi \in (0,1)$, $c>0$, and $C>0$, such that for any $m,n\ge 1$ integers, and
$y,z$ positive reals, 
\begin{equation}\label{KH-main}
\bP\big(|\cR_{m+n}|\ge y+z - C a(m,n) \big)\ \ge\ 
c\, \chi^{a(m,n)}\, \bP\big(|\cR_n|\ge y\big)\bP\big(|\cR_m|\ge z\big),
\end{equation}
with
\begin{equation*}
a(m,n):=(n\cdot m)^{\frac{1}{d+1}}. 
\end{equation*}
The second step, which is general 
and only based on \eqref{KH-main} and the fact that (when $d\ge 2$) one has $\lim_{m,n\to\infty} \frac{a(m,n)}{n\vee m}=0$, 
shows that the following limit exists, 
\[
\psi(x):=- \lim_{n\to\infty} \frac{1}{n}
\log \bP\big(|\cR_n|\ge x\cdot n \big), \quad \text{for all }x>0,
\]
and that $\psi$ is continuous and convex on $[0,1]$. 
Here we prove an analogous result as \eqref{KH-main}, and use their general argument to conclude.

\begin{proof}[Proof of Theorem \ref{theo:upward}]
We first prove an analogous result as \eqref{KH-main}, but with $a(m,n)$ replaced by the function:  
\[
\widetilde a(m,n)=(n\cdot m)^{\frac{1}{d-1}}.
\]
In other words we establish the following inequality. There exists 
$\chi \in (0,1)$, and $C>0$, such that for any $m,n$ integers and $y,z$ positive reals,
\begin{equation}\label{amin-1}
\bP\left(\cc{\cR_{m+n}}\ge y+z - C\, \widetilde  a(m,n) \right)\ge
\frac 12 \chi^{\widetilde a(m,n)} \bP\left(\cc{\cR_n}\ge y\right)
\bP\left(\cc{\cR_m}\ge z\right).
\end{equation}
The first step is to obtain the analogue of the following simple
deterministic bound used in \cite{HK}: if $\cR_n$ and $\widetilde \cR_m$ are two independent 
copies of the range, there is a positive constant $C$, such that
for any $r\ge 1$
\[
\frac{1}{|Q(r)|}\sum_{z\in Q(r)}
|(z+\cR_n)\cap \widetilde \cR_m|
\le C\, \frac{n\cdot m}{r^d}.
\]
The corresponding bound in our context reads as follows: 
\begin{equation}\label{amin-2}
\frac{1}{|Q(r)|}\sum_{z\in Q(r)} \sum_{x\in \cR_n}
\sum_{y\in \widetilde \cR_m} G(x-y+z)\le C \, \frac{n\cdot m}
{r^{d-2}},
\end{equation}
and is a direct consequence of \eqref{Green} 
and the fact that for any $x\in \Z^d$, and for some constant $C>0$,
$$
\sum_{z\in Q(r)} \frac{1}{1+\|z-x\|^{d-2}} \le C\, r^2.
$$
Now to lighten notation, we simply write $a=\widetilde a(m,n)=\lfloor (mn)^{\frac{1}{d-1}}\rfloor$. Using that the capacity is translation-invariant, we deduce
\begin{equation}\label{amin-3}
\begin{split}
\cc{\cR_{m+n+a}}&\stackrel{\eqref{cap.mon}}{\ge}\cc{\cR_n\cup \cR[n+a,n+m+a]}\\
&\stackrel{\eqref{decomp-1}}{=} \cc{\overline \cR_n} +\cc{\widetilde \cR_m} -\chi_C(\overline \cR_n,\widetilde \cR_m+S'_a),
\end{split}
\end{equation}
with $\overline \cR_n:= \cR_n - S_n$, $S'_a := S_{n+a}- S_n$, and $\widetilde \cR_m:=\cR[n+a,n+m+a]-S_{n+a}$. 
The Markov property implies that
$\overline \cR_n$ and $\widetilde R_m$ are independent, and distributed as $\cR_n$ and $\cR_m$ respectively. 
Furthermore, 
\begin{equation}\label{amin-4}
\chi_C(\overline \cR_n,\widetilde \cR_m+ S'_a)\stackrel{\eqref{decomp-2}}{\le} \sum_{x\in \overline \cR_n} \sum_{y\in \widetilde \cR_m} G(x-y- S'_a).
\end{equation}
Now, one idea of Hamana and Kesten \cite{HK} is to bound the law of $S_a'$ by a uniform
law on the cube $Q(a/d)$. Indeed for any $x\in Q(a/d)$, for which $\bP(S_a=x)\neq 0$, one has 
\begin{equation}\label{inf.hk}
\bP(S_a'=x)\ge \frac{1}{(2d)^a},
\end{equation}
since there is at least one path of length $a$ going from $0$ to $x$. 
Write $\overline Q(a/d)$ for the set of sites $x\in Q(a/d)$, for which $\bP(S_a=x)\neq 0$. Then for any $x\in \overline Q(a/d)$, and any $\alpha>0$, 
\begin{equation*}
\bP \left( \cp(\cR_{m+n+a})\ge  z+y-\frac{\alpha}{2} \right) \stackrel{\eqref{amin-3}}{\ge} 
\bP(S_a'=x)\cdot \bP\left(\cp(\overline \cR_n)\ge z,\cp(\widetilde \cR_m)\ge y,
\chi_C(\overline \cR_n,\widetilde \cR_m+x)\le \frac{\alpha}{2}\right).
\end{equation*}
Integrating with respect to the uniform measure on $\overline Q(a/d)$, we get
\begin{equation}\label{KH-2}
\begin{split}
\bP\big( \cp(\cR_{m+n+a}) & \ge z+y-\frac{\alpha}{2}\big) \ \stackrel{\eqref{inf.hk}}{\ge} \ 
\frac{1}{(2d)^a} \\
&  \times \frac{1}{|\overline Q(a/d)|}\sum_{x\in \overline  Q(a/d)} 
\bP\left(\cp(\overline \cR_n)\ge z,\cp(\widetilde \cR_m)\ge y,
\chi_C(\overline \cR_n,\widetilde \cR_m+x)\le \frac{\alpha}{2}\right).
\end{split}
\end{equation}
We need now to estimate the mean of $\chi_C(\overline \cR_n,\widetilde \cR_m+\cdot)$
with respect to the uniform measure. According to \eqref{amin-2}, there
is a positive constant $C$, such that
\begin{equation}\label{amin-5}
\frac{1}{|\overline Q(a/d)|}\sum_{x\in \overline Q(a/d)} \chi_C(\overline \cR_n,\widetilde \cR_m+x)\le  C \frac{m\cdot n}{a^{d-2}}\le  Ca,
\end{equation}
where the last inequality follows from the definition of $a$. 
Then by Chebychev's inequality, we obtain 
\begin{equation}\label{cheb}
\frac{1}{|\overline Q(a/d)|}\sum_{x\in \overline Q(a/d)} \1(\chi_C(\overline \cR_n,\widetilde \cR_m+x)\le
2Ca)\ge \frac{1}{2}.
\end{equation}
As a consequence, 
\begin{equation*}
\begin{split}
& \ \bP\big(\cp(\cR_{m+n}) \ge z+y-a- 4Ca\big) \ \stackrel{\eqref{cap.subadd}, \eqref{cap.card}}{\ge} \  
\bP\big(\cp(\cR_{m+n+a})\ge z+y- 4Ca\big)\\
& \stackrel{\eqref{KH-2}}{\ge}\  \frac{1}{(2d)^a} \cdot \bE\Big[\1(\cp(\overline \cR_n)\ge z)\cdot 
\1(\cp(\widetilde \cR_m)\ge y)
 \times \frac{1}{|\overline Q(a/d)|}\sum_{x\in \overline Q(a/d)} \1(\chi_C(\overline \cR_n,\widetilde \cR_m+x)\le 2 Ca)\Big]\\
& \stackrel{\eqref{cheb}}{\ge}\  \frac{1}{2(2d)^a} \cdot \bP\big(\cc{\cR_n}\ge z\big)
\bP\big(\cc{ \cR_m}\ge y\big),
\end{split}
\end{equation*}
proving \eqref{amin-1}, with $\chi = 1/(2d)$.

It then follows from the general arguments of Hamana and Kesten, see Lemma 3 in \cite{HK}, that the following limit exists for all $x>0$: 
$$\psi_d(x)\ := \ -\lim_{n\to \infty}\  \frac 1n \log \bP\left(\cp(\cR_n) \ge nx\right).$$

We now prove that the range for which $\psi_d(x)$ is finite is not empty.
Define for $n\ge 0$, 
\begin{equation}\label{cap.cn}
c_n:=\max_{\gamma:\{0,\dots,n\}\to \Z^d} \cp(\{\gamma(0),\dots,\gamma(n)\}),
\end{equation}
where the max is taken over all nearest neighbor paths of length $n+1$. 
By \eqref{cap.subadd}, it follows that $c_{n+m}\le c_n + c_m$, for all $n,m\ge 0$, so that by Fekete's lemma, the limit $\lim_{n\to \infty} c_n/n$ exists. Call $\gamma_d^*$ this limit. 
Note that $\psi_d(x)$ is finite on $[\gamma_d,\gamma_d^*]$, since the probability that the simple random walk follows the path realizing the maximum in \eqref{cap.cn} is larger than or equal to $1/(2d)^{n+1}$, so that $\psi_d(x)\le \log(2d)$, for all $x\le \gamma_d^*$. Conversely, by definition of $c_n$, one has $\psi_d(x)=\infty$ for all $x> \gamma_d^*$. 
Furthermore, it follows from Lemma 3 and Proposition 4 in \cite{HK}, that $\psi_d$ is continuous, and convex on $(0,\gamma_d^*]$. 
Now Proposition \ref{cor:upward} and Lemma \ref{lem.exp.cap} show that when $d=5$, $\psi_d(x) \ge c(x-\gamma_5)^3$, for all $x\ge \gamma_d$. 
Likewise, when $d\ge 6$, we get $\psi_d(x) \ge c(x-\gamma_d)^3$, for $\gamma_d \le x\le 1$. 
Using convexity, this also shows that $\psi_d$ is increasing on $[\gamma_d,\gamma_d^*]$. 
In addition one has $\psi_d(x) = 0$ for all $x<\gamma_d$, by definition of $\gamma_d$ as the limit of the (normalized) expected capacity, and using that by \eqref{cap.card}, $\cp(\cR_n)\le n$.

Finally we show that $\gamma_d^*>\gamma_d$.

Consider $\D_n$ the set of {\it no double backtrack at even times} 
paths of length $n+1$ that we introduced in \cite{AS2}. 
By definition, this is simply the set of paths $\gamma:\{0,\dots,n\}\to \Z^d$, such that for any even $k\le n$, one has $\gamma(k+2)\neq \gamma(k)$. The only important property we need is that from a
no-backtrack walk $\widetilde S$, and a sum of independent geometric
variables $\{\xi_i,\ i\in \N\}$, with parameter $1/(2d)^2$, we can build a simple random walk $S$
such that
\begin{equation*}
\cR[0,n+2\sum_{i\le n/2} \xi_i]=\widetilde \cR_n. 
\end{equation*}
Thus, for any $\alpha>0$, we have by \eqref{cap.subadd} and \eqref{cap.card},
\begin{equation*}
\cp(\widetilde \cR_n) 
\ge \cp(\cR_{(1+\alpha)n})-
\1\left(\sum_{i\le n/2} \xi_i< \frac{\alpha n}{2}\right) \cdot (1+\alpha)(n+1).
\end{equation*}
By taking the maximum over $\D_n$ on the left hand side, and
then the expectation on the right hand side, we obtain
\begin{equation}\label{upw-const4}
c_n\ge \max_{\pi\in \D_n} \cp(\pi) \ge \bE[ \cp(\cR_{(1+\alpha)n})]-
(1+\alpha)(n+1)\cdot \bP\left(\sum_{i\le n/2} \xi_i<\frac{\alpha n}{2}\right).
\end{equation}
Now take $\alpha< 1/(2d)^2$, and use Chebyshev's inequality, to see that  the last term of \reff{upw-const4} is $\mathcal O(1)$. 
Together with Lemma \ref{lem.exp.cap} it implies that 
\begin{equation*}
c_n\ge \gamma_d(1+\alpha)n-\cO(\sqrt n),
\end{equation*}
which indeed proves that $\gamma_d<\gamma_d^*$. 
\end{proof}

\vspace{0.3cm}
\noindent{{\bf Acknowledgements:} Perla Sousi participated at an early stage
of the project and we thank her for stimulating discussions, and her 
proof of Lemma~\ref{lem-HD}. 
We also thank Quentin Berger and Julien Poisat for the idea of 
considering the polymer melt. Finally, we thank two anonymous referees,
whose suggestions were crucial in clarifying the arguments.
The authors were partly supported by the French Agence Nationale 
de la Recherche under grants ANR-17-CE40-0032 and ANR-16-CE93-0003. }

\end{document}